\newcommand{\setword}[2]{%
	\phantomsection
	#1\def\@currentlabel{\unexpanded{#1}}\label{#2}%
}
\definecolor{uuuuuu}{rgb}{0.26666666666666666,0.26666666666666666,0.26666666666666666}
\definecolor{xdxdff}{rgb}{0.49019607843137253,0.49019607843137253,1.}
\definecolor{ffqqqq}{rgb}{1.,0.,0.}
\definecolor{ffqqqq}{rgb}{1.,0.,0.}
\definecolor{ffxfqq}{rgb}{1.,0.4980392156862745,0.}
\definecolor{uuuuuu}{rgb}{0.26666666666666666,0.26666666666666666,0.26666666666666666}
\definecolor{qqwuqq}{rgb}{0.,0.39215686274509803,0.}
\definecolor{zzttqq}{rgb}{0.6,0.2,0.}
\definecolor{xdxdff}{rgb}{0.49019607843137253,0.49019607843137253,1.}
\definecolor{qqqqff}{rgb}{0.,0.,1.}
\definecolor{cqcqcq}{rgb}{0.7529411764705882,0.7529411764705882,0.7529411764705882}
\definecolor{sqsqsq}{rgb}{0.12549019607843137,0.12549019607843137,0.12549019607843137}
\theoremstyle{plain}
\newtheorem{theorem}[subsection]{Theorem}
\newtheorem{corollary}[subsection]{Corollary}
\newtheorem{lemma}[subsection]{Lemma}
\newtheorem{defi}[subsection]{Definition}
\newtheorem{prop}[subsection]{Proposition}
\theoremstyle{definition}
\newtheorem{exam}[subsection]{Example}
\newtheorem{remark}[subsection]{Remark}
\newtheorem{note}[subsection]{Note}
\newtheorem{method}[subsection]{Methodology}
\newtheorem{main}[subsection]{Main Result}
\newcommand{\uu}{\cup}
\newcommand{\ii}{\cap}
\newcommand{\UU}{\bigcup}
\newcommand{\II}{\bigcap}
\newcommand{\ci}{\subseteq}
\newcommand{\sci}{\subset}
\newcommand{\es}{\emptyset}
\newcommand{\set}[1]{\{#1\}}
\newcommand{\ga}{\alpha}
\newcommand{\gb}{\beta}
\renewcommand{\gg}{\gamma}
\newcommand{\gk}{\kappa}
\newcommand{\gq}{\theta}
\newcommand{\tit}{\textit}
\newcommand{\D}[1]{\mathbb{#1}}
\newcommand{\te}{\text}
\newcommand{\nd}{\noindent}
\newcommand{\pa}{\partial}
\newcommand{\tri}{\triangle}
\begin{document}

\nd To appear, in the journal `Real Analysis Exchange'
	\title{Conditional constrained and unconstrained quantization for uniform distributions on regular polygons}
	
	\address{$^{1}$Department of Mathematics\\
		Concordia University, Austin, Texas 78726, USA.}
	\address{$^{2}$Russ College of Engineering and Technology\\ 
Department of Industrial and System Engineering, Ohio University, Athens, Ohio 45701, USA.}

	\address{$^{3}$School of Mathematics \\
		Northwest University Xi'an, Shaanxi Province, 710069, China}
	 
	\address{$^{4}$School of Mathematical and Statistical Sciences\\
		University of Texas Rio Grande Valley\\
		1201 West University Drive\\
		Edinburg, TX 78539-2999, USA.}

	\author{$^1$Christina Hamilton}
	\author{$^2$Evans Nyanney}
	\author{$^{3}$Megha Pandey}
	\author{$^4$Mrinal K. Roychowdhury}

	\email{$^1$christina.hamilton@concordia.edu, $^2$en596624@ohio.edu, $^{3}$meghapandey1071996@gmail.com}
	\email{$^4$mrinal.roychowdhury@utrgv.edu}


	\subjclass[2010]{60Exx, 94A34.}
	\keywords{Probability measure, constrained and unconstrained quantization, conditional quantization, optimal sets of $n$-points, quantization dimension, quantization coefficient, hexagon}
	
	\date{}
	\maketitle
	\pagestyle{myheadings}\markboth{C. Hamilton, E. Nyanney, M. Pandey, and M.K. Roychowdhury }
	{Conditional constrained and unconstrained quantization for uniform distributions on regular polygons}
	
	\begin{abstract}
		In this paper, we have considered a uniform distribution on a regular polygon with $k$-sides for some $k\geq 3$ and the set of all its $k$ vertices as a conditional set. For the uniform distribution under the conditional set first, for all positive integers $n\geq k$, we obtain the conditional optimal sets of $n$-points and the $n$th conditional quantization errors, and then we calculate the conditional quantization dimension and the conditional quantization coefficient in the unconstrained scenario. Then, for the uniform distribution on the polygon taking the same conditional set, we investigate the conditional constrained optimal sets of $n$-points and the conditional constrained quantization errors for all $n \geq 6$, taking the constraint as the circumcircle, incircle, and then the different diagonals of the polygon.
	\end{abstract}

	\section{Introduction}
	Constrained quantization has recently been introduced by Pandey and Roychowdhury (see \cite{PR2, PR1}). They have also introduced conditional quantization in both constrained and unconstrained quantization (see \cite{PR4}). For some follow up papers in the direction of constrained quantization and conditional quantization one can see \cite{BCDRV,BCDR,PR3,PR5}. With the introduction of constrained quantization, quantization now has two classifications: Constrained Quantization and Unconstrained Quantization. Unconstrained quantization is traditionally known as quantization.
	For unconstrained quantization and its applications one can see \cite{DR,DFG,GG,GL2, GL3,GL,GN,GL1, KNZ, P, P1, R1, R2, R3, Z1, Z2}. Constrained quantization has much more interdisciplinary applications in information theory, machine learning and data compression, signal processing, and national security (see Subsection \ref{MotSig}). 
	Let $P$ be a Borel probability measure on $\D R^2$ equipped with a Euclidean metric $d$ induced by the Euclidean norm $\|\cdot\|$.

	\begin{defi}  \label{defi0} 
		Let $S\ci \D R^2$ be nonempty and closed. Let $\gb\ci \D R^2$ be given with $\te{card}(\gb)=\ell$ for some $\ell\in \D N$. 
		Then, for $n\in \D N$ with $n\geq \ell$, the \tit {$n$th conditional constrained quantization
			error} for $P$ with respect to the constraint $S$ and the set $\gb$, is defined as
		\begin{equation}  \label{eq0} 
			V_{n}:=V_{n}(P)=\inf_{\ga} \Big\{\int \mathop{\min}\limits_{a\in\ga\uu\gb} d(x, a)^2 dP(x) : \ga \ci S, ~ 0\leq  \text{card}(\ga) \leq n-\ell \Big\},
		\end{equation} 
		where $\te{card}(A)$ represents the cardinality of the set $A$. 
	\end{defi} 
	If $S=\D R^2$, i.e., if there is no constraint, then the $n$th conditional constrained quantization error, given in Definition~\ref{defi0}, reduces to the definition of the $n$th conditional unconstrained quantization error, which is also refereed to as \tit{$n$th conditional quantization error}, i.e., the \tit{$n$th conditional quantization error} is defined as 
	
	\begin{equation}  \label{eq1} 
		V_{n}:=V_{n}(P)=\inf_{\ga} \Big\{\int \mathop{\min}\limits_{a\in\ga\uu\gb} d(x, a)^2 dP(x) : \ga \ci \D R^2, ~ 0\leq  \text{card}(\ga) \leq n-\ell \Big\}.
	\end{equation} 
	For any $\gg\ci \D R^2$, the number 
	\begin{equation*}
		V(P; \gg):= \int \mathop{\min}\limits_{a\in\gg} d(x, a)^2 dP(x)
	\end{equation*}
	is called the \tit{distortion error} for $P$ with respect to the set $\gg$.  
	We assume that $\int d(x, 0)^2 dP(x)<\infty$ to make sure that the infimums in \eqref{eq0} and \eqref{eq1} exist (see \cite{PR1}). For a finite set $\gg \sci \D R^2$ and $a\in \gg$, by $M(a|\gg)$ we denote the set of all elements in $\D R^2$ which are nearest to $a$ among all the elements in $\gg$, i.e.,
	$M(a|\gg)=\set{x \in \D R^2 : d(x, a)=\mathop{\min}\limits_{b \in \gg}d(x, b)}.$
	$M(a|\gg)$ is called the \tit{Voronoi region} in $\D R^2$ generated by $a\in \gg$. 
	
	\begin{defi} \label{defiM1}
		A set $ \ga\uu\gb$, where $P(M(b|\ga\uu \gb))>0$ for $b\in \gb$, for which the infimum in $V_n$ exists and contains no less than $\ell$ elements, and no more than $n$ elements is called an \tit{optimal set of $n$-points} for $P$, or more specifically a  \tit{conditional optimal set of $n$-points} for $P$. Elements of an optimal set are called \tit{optimal elements}. 
	\end{defi}
	Let $V_{n}(P)$ be a strictly decreasing sequence, and write $V_{\infty}(P):=\mathop{\lim}\limits_{n\to \infty} V_{n}(P)$.  Then, the number 
	\begin{align*}
		D(P):=\mathop{\lim}\limits_{n\to \infty}  \frac{2\log n}{-\log (V_{n}(P)-V_{\infty}(P))}
	\end{align*} 
	if it exists, 
	is called the \tit{conditional constrained quantization dimension} (or \tit{conditional quantization dimension}) of $P$ if $V_n(P)$ represents the $n$th conditional constrained quantization error (or conditional quantization error), and is denoted by $D(P)$. The quantization dimension measures the speed how fast the specified measure of the quantization error converges as $n$ tends to infinity.
	For any $\gk>0$, the number 
	\[\lim_n n^{\frac 2 \gk}  (V_{n}(P)-V_{\infty}(P)),\]
	if it exists, is called the $\gk$-dimensional \tit{conditional constrained quantization coefficient} (or \tit{conditional quantization coefficient}) for $P$ if $V_n(P)$ represents the $n$th conditional constrained quantization error (or conditional quantization error). 
	For $k\in \D N$ with $k\geq 3$, let us consider a regular polygon  with $k$-sides inscribed in a unit circle with center $(0, 0)$, known as \tit{regular $k$-gon}, with vertices $A_j$, where 
	\[A_j := \Big(\cos (\frac {3\pi}2 +\frac{\pi  (2 j-3)}{k}), \sin(\frac {3\pi}2 +\frac{\pi  (2 j-3)}{k})\Big) =\Big(\sin (\frac{\pi  (2 j-3)}{k}), -\cos(\frac{\pi  (2 j-3)}{k})\Big)\te{ for } 1\leq j\leq k.\]
	Notice that the length of each side of such a regular polygon inscribed in a unit circle is $2\sin \frac{\pi}k$. 
	
	\begin{remark}
		The vertices of a regular polygon inscribed in a unit circle can be chosen in an infinitely many different ways. The vertices of the regular polygon in this paper are chosen in such a way so that the side $A_1A_2$ always remains below and parallel to the horizontal axis.
	\end{remark}  
	If $k=6$, it becomes a regular hexagon with vertices $A_1(-\frac 12, -\frac{\sqrt 3}{2})$, $A_2(\frac 12, -\frac{\sqrt 3}{2})$, $A_3(1, 0)$, $A_4(\frac 12, \frac{\sqrt 3}{2})$, $A_5(-\frac 12, \frac{\sqrt 3}{2})$, and $A_6(-1, 0)$ as shown in Figure~\ref{Fig1}. Let us take the conditional set $\gb$ as the set of all the vertices of the regular $k$-gon. Let $P$ be a uniform probability distribution on the regular $k$-gon. In this paper, in Section~\ref{sec2} for the uniform distribution $P$, first we calculate the conditional optimal sets of $n$-points and the $n$th conditional quantization errors for all $n\in \D N$ with $n\geq k$, and then we calculate the conditional quantization dimension and the conditional quantization coefficient. Then, for the uniform distribution on the regular $k$-gon, for $n\geq k$, we calculate the conditional constrained optimal sets $\ga_n$ of $n$-points and the corresponding $n$th conditional constrained quantization errors $V_n$ for the same conditional set $\gb$ and taking different constraints: in Section~\ref{sec3} taking the constraint $S$ as the circumcircle, and in Section~\ref{sec4} taking the constraint as the incircle of the polygon. Finally, in Section~\ref{sec5}, taking $k=6$, i.e., for the uniform distribution on a regular hexagon inscribed in a unit circle taking the conditional set as all of its six vertices we calculate the conditional constrained optimal sets $\ga_n$ of $n$-points and the corresponding $n$th conditional constrained quantization errors $V_n$ for the constraint as a diagonal of the hexagon: in Subsection~\ref{sub1} the constraint is taken as one of its larger diagonals, and in Subsection~\ref{sub2} the constraint is taken as one of its smaller diagonals.

	\subsection{Motivation and Significance.} \label{MotSig} 
	To keep our calculation simple and to avoid many technical difficulties in this paper, we have investigated the conditional quantization in both constrained and unconstrained scenarios for a uniform distribution on a regular polygon. By using the similar technique or with a major overhaul, one can investigate the conditional quantization for any probability distribution defined on a polygon with respect to any conditional set. Conditional quantization has significant interdisciplinary applications: for example, in radiation therapy of cancer treatment to find the optimal locations of $n$ centers of radiation, where $k$ centers for some $k<n$ of radiation are preselected, the conditional quantization technique can be used. As an another application, we can say that the amount of agricultural water usage needs to be controlled by placing a minimal number of water sprinkles (or any other resources) in a way that they cover the whole cropland as required. Suppose for some positive integer $k$, there are already $k$ sprinklers installed. Now, to install more sprinklers not removing the previously installed sprinklers in an optimal way, the conditional quantization technique can be used. Conditional quantization has recently been introduced by Pandey-Roychowdhury in \cite{PR4}. There are many interesting open problems that can be investigated in the area of Mathematics and Statistics.  The work in this paper is an advancement in this direction.

	\section{Preliminaries} \label{preliminary}  \label{secPre}
	For any two elements $(a, b)$ and $(c, d)$ in $\D R^2$, we write 
	\[\rho((a, b), (c, d)):=(a-c)^2 +(b-d)^2,\] which gives the squared Euclidean distance between the two elements $(a, b)$ and $(c, d)$. Let $p$ and $q$ be two elements that belong to an optimal set of $n$-points for some positive integer $n$. Then, $p$ and $q$  are called \tit{adjacent elements} if they have a common boundary in their own Voronoi regions. Let $e$ be an element on the common boundary of the Voronoi regions of the adjacent elements $p$ and $q$.
	Since the common boundary of the Voronoi regions of any two adjacent elements is the perpendicular bisector of the line segment joining the elements, we have
	\[\rho(p, e)-\rho(q, e)=0. \]
	We call such an equation a \tit{canonical equation}. Let $L$ be the boundary of the regular polygon $A_1A_2\cdots A_k$ with $k$ sides inscribed in a unit circle with center $(0, 0)$  as described in the previous section. Let $L_j$ represent the side $A_jA_{j+1}$ for $1\leq j\leq k$, where the vertex $A_{k+1}$ is identified with the vertex $A_1$. Then, we have 
	\begin{equation} \label{eqMM0} L=\UU_{j=1}^k L_j.
	\end{equation} 
	Let $s$ represent the distance of any point on the regular polygon from the vertex $A_1$ tracing along the boundary $L$ in the counterclockwise direction. Then, the vertices $A_1, A_2, \cdots, A_k$ are, respectively, represented by $s=0, s=2\sin \frac{\pi}k, \cdots, s=2(k-1)\sin \frac{\pi}k$. 
	The probability density function (pdf) $f$ of the uniform distribution $P$ is given by $f(s):=f(x, y)=\frac 1{k}$ for all $(x, y)\in L$, and zero otherwise. The sides $L_j$ are represented by the parametric equations as follows:
	\begin{align*}
		L_j&=tA_{j+1}+(1-t)A_j \\
		&=\Big\{\Big(t \sin\frac{\pi(2j-1)}{k}+(1-t)\sin \frac{\pi(2j-3)}{k}, -t \cos\frac{\pi(2j-1)}{k}-(1-t)\cos \frac{\pi(2j-3)}{k}\Big) : 0\leq t\leq 1\Big\}. 
	\end{align*}
	Thus, in the case of a hexagon, the sides $L_1, L_2, L_3, L_4, L_5, L_6$ are represented by the parametric equations as follows:
	\begin{align*}
		L_1=\set {(t-\frac{1}{2},-\frac{\sqrt{3}}{2})},  &\quad L_2 =\{(\frac{t+1}{2},\frac{1}{2} \sqrt{3} (t-1))\}, \quad L_3=\{(1-\frac{t}{2},\frac{\sqrt{3} t}{2})\}, \quad L_4=\{(\frac{1}{2}-t,\frac{\sqrt{3}}{2})\}, \\
		& L_5=\{(\frac{1}{2} (-t-1),-\frac{1}{2} \sqrt{3} (t-1))\}, \quad L_6=\{(\frac{t-2}{2},-\frac{\sqrt{3} t}{2})\},
	\end{align*}
	where $0\leq t\leq 1$. 
	Again, $dP(s)=P(ds)=f(x_1, x_2) ds$, where $d$ stands for differential. Notice that on each of $L_j$, for $1\leq j\leq k$, we have $ds=\sqrt{(dx)^2+(dy)^2}=2 \sin \frac {\pi} k\, dt$.
	The conditional set $\gb$ considered in this paper is the set  
	\[\gb=\Big\{A_j  : 1\leq j\leq k \Big\}.\]
	Let $T: \D R^2 \to \D R^2$ be an affine transformations such that $T(x, y)=(ax+by, cx+dy)$ for $(x, y) \in \D R^2$, where 
	\[a=\cos  \frac{2 \pi }{k}, \, b= -\sin  \frac{2 \pi }{k}, \, c= \sin  \frac{2 \pi }{k}, \te{ and } d= \cos  \frac{2 \pi }{k}.\] Notice that $T$ is a rotation map with angle $\frac{2\pi}{k}$ in $\mathbb{R}^2$.
	Also, for any $j\in \D N$, by $T^j$ it is meant the composition mapping 
	$T^j=T\circ T\circ T\circ\cdots j\te{-times}.$ If $j=0$, i.e., by $T^0$ it is meant the identity mapping on $\D R^2$. Then, notice that 
	\begin{align*} T(A_j)=A_{j+1} \te{ and }  T (L_j)=L_{j+1} \te{ for any } j\in \D N, 
	\end{align*}
	where $A_{k+1} \te{ is identified as } A_1 \te{ and } L_{k+1} \te{ is identified as } L_1$.

The following proposition is a modified version of Proposition~2.1 in \cite{PR3}.
	\begin{prop}\emph{(see \cite{PR3})} \label{Me0} 
		Let $\ga_n$ be an optimal set of $n$-points for $P$ such that $\ga_n$ contains $m$ elements, for some $m\leq n$, from the side $L_1$ including the endpoints $A_1$ and $A_2$. Then,
		\[\ga_n\ii L_1=\Big\{A_1+\frac{j-1}{m-1}(A_2-A_1): 1\leq j\leq m\Big\}=\Big\{\Big(\frac{(2 j-m-1) \sin \frac{\pi }{k}}{m-1},-\cos\frac{\pi }{k}\Big): 1\leq j\leq m\Big\}\] 
		and the distortion error contributed by these $m$ elements from the side $L_1$ is given by 
		\[V(P; \ga_n\ii L_1)=\frac{2\sin ^3 \frac{\pi }{k}}{3 k (m-1)^2}.\]
	\end{prop}

	\begin{corollary}   
		Let $\ga_n$ be an optimal set of $n$-points for $P$ such that $\ga_n$ contains $m$ elements, for some $m\leq n$, from the side $L_1$ including the endpoints $A_1$ and $A_2$. Then, if $k=6$, i.e., in the case of a uniform distribution on a  regular hexagon, we have  
		\[\ga_n\ii L_1=\Big\{\Big(-\frac 1 {2}+\frac{j-1}{m-1}, \, -\frac {\sqrt 3}{2}\Big) : 1\leq j\leq m\Big\},\] 
		and the distortion error contributed by these $m$ elements from the side $L_1$ is given by 
		\[V(P; \ga_n\ii L_1)=\frac {1} {72 (m-1)^2}.\]
	\end{corollary}

	In the following sections, we give the main results of the paper.
	
	\begin{figure}
		\vspace{-0.3 in} 
		\begin{tikzpicture}[line cap=round,line join=round,>=triangle 45,x=0.5 cm,y=0.5 cm]
			\clip(-7.5,-6.5) rectangle (6.9,6.9);
			\draw [line width=0.8 pt] (-2.,3.4641016151377544)-- (-4.,0.)-- (-2.,-3.4641016151377544)-- (2.,-3.4641016151377544)-- (4.,0.)-- (2.,3.4641016151377544);
			\draw [line width=0.8 pt] (-2.,3.4641016151377544)-- (2.,3.4641016151377544);
			\draw [line width=0.8 pt] (0.,0.) circle (2.cm);
			\draw [line width=0.8 pt] (0.,0.) circle (3.4641);
			\draw (-6.66,-3.10) node[anchor=north west] {$A_1(-\frac 12, -\frac{\sqrt{3}}{2})$};
			\draw (1.60,-3.05) node[anchor=north west] {$A_2(\frac 12, -\frac{\sqrt{3}}{2})$};
			\draw (1.8,4.66) node[anchor=north west] {$A_4(\frac 12, \frac{\sqrt{3}}{2})$};
			\draw (-6.5,4.66) node[anchor=north west] {$A_5(-\frac 12, \frac{\sqrt{3}}{2})$};
			\draw (3.8,0.65) node[anchor=north west] {$A_3(1, 0)$};
			\draw (-7.9,0.65) node[anchor=north west] {$A_6(-1, 0)$};
			\draw [line width=0.2 pt,dash pattern=on 1pt off 1pt] (0.,0.)-- (-2.,-3.4641016151377544);
			\draw [line width=0.2,dash pattern=on 1pt off 1pt] (0.,0.)-- (2.,-3.4641016151377544);
			\draw [line width=0.2,dash pattern=on 1pt off 1pt] (0.,0.)-- (2.,3.4641016151377544);
			\draw [line width=0.2,dash pattern=on 1pt off 1pt] (0.,0.)-- (-4.,0.);
			\draw [line width=0.2,dash pattern=on 1pt off 1pt] (0.,0.)-- (4.,0.);
			\draw [line width=0.2,dash pattern=on 1pt off 1pt] (0.,-0.02)-- (-2.,3.4441016151377544);
			\draw [line width=0.2 pt,dash pattern=on 1pt off 1pt] (-2., -3.4641)-- (4, 0);
			\draw [line width=0.2 pt,dash pattern=on 1pt off 1pt] (2., -3.4641)-- (2.,3.4641016151377544);
			\draw [line width=0.2 pt,dash pattern=on 1pt off 1pt] (-4, 0)-- (2.,3.4641016151377544);
			\draw [line width=0.2 pt,dash pattern=on 1pt off 1pt] (-4, 0)-- (2.,-3.4641016151377544);
			\draw [line width=0.2 pt,dash pattern=on 1pt off 1pt] (4, 0)-- (-2.,3.4641016151377544);
			\draw [line width=0.2 pt,dash pattern=on 1pt off 1pt] (-2.,3.4641016151377544)-- (-2.,-3.4641016151377544);
			\begin{scriptsize}
				\draw [fill=ffqqqq] (-2.,-3.4641016151377544) circle (1.0pt);
				\draw [fill=ffqqqq] (2.,-3.4641016151377544) circle (1.0pt);
				\draw [fill=ffqqqq] (4.,0.) circle (1.0pt);
				\draw [fill=ffqqqq] (2.,3.4641016151377544) circle (1.0pt);
				\draw [fill=ffqqqq] (-2.,3.4641016151377544) circle (1.0pt);
				\draw [fill=ffqqqq] (-4.,0.) circle (1.0pt);
				\draw [fill=ffqqqq] (0.,0.) circle (1.0pt);
			\end{scriptsize}
		\end{tikzpicture}
		\vspace{-0.5 in}
		\caption{The regular hexagon with the circumcircle and the incircle.} \label{Fig1}
	\end{figure}
	\section{Conditional quantization for the uniform distribution $P$ on a regular $k$-gon for some $k\geq 3$} \label{sec2} 
	
	In this section, we calculate the conditional optimal sets of $n$-points and the conditional quantization errors for all $n\geq k$ for a uniform distribution on a regular polygon with $k$ sides inscribed in a unit circle with center $(0, 0)$. 
	
	The following proposition which is similar to a well-known proposition that occurs in unconstrained quantization (see \cite{GL}) is also true here.  
	\begin{prop} \label{prop0}
		Let $\ga$ be a conditional optimal set of $n$-points with respect to a conditional set $\gb$ for a probability distribution $P$, and $a\in \ga\setminus \gb$. Then,
		
		$(i)$ $P(M(a|\ga))>0$, $(ii)$ $ P(\partial M(a|\ga))=0$, $(iii)$ $a=E(X : X \in M(a|\ga))$,
		where $M(a|\ga)$ is the Voronoi region of $a\in \ga , $ i.e.,  $M(a|\ga)$ is the set of all elements $x$ in $\D R^d$ which are closest to $a$ among all the elements in $\ga$.
	\end{prop}
	
	Recall that the conditional optimal set of $k$-points is the set $\gb$ which consists of all the vertices of the regular $k$-gon. We now investigate the conditional optimal sets of $n$-points and the conditional quantization errors for all $n>k$.
	\begin{prop}\label{prop1001}
		If $k=3$, i.e., if the regular polygon is an equilateral triangle, then the conditional optimal set of $n$-points for $n=4$ for the uniform distribution $P$ is given by 
		\[\ga_4=\gb\uu\set{(0, 0)} \te{ with quantization error } V_4=\frac 12.\]
	\end{prop} 
	
	\begin{proof}
		In this case, the equilateral triangle has vertices given by $A_1(-\frac{\sqrt{3}}{2},-\frac{1}{2})$, $A_2(\frac{\sqrt{3}}{2},-\frac{1}{2})$, and $A_3(0, 1)$. Then, the sides $A_1A_2$, $A_2A_3$, and $A_3A_1$ are, respectively, denoted by $L_1$, $L_2$, and $L_3$, the parametric representations of which are given by 
		\[L_1=\{(\frac{1}{2} \sqrt{3} (2 t-1),-\frac{1}{2})\}, L_2=\{(-\frac{1}{2} \sqrt{3} (t-1),\frac{1}{2} (3 t-1))\}, \te{ and } L_3=\{(\frac{1}{2} \sqrt{3} (1-2 t),-\frac{1}{2})\},\]
		for $0\leq t\leq 1$.
		Let $\gg:=\gb\uu \set{(a, b)}$ be a conditional optimal set of four-points. By Proposition~\ref{prop0}, the following two cases can happen.
		
		\tit{Case~1.  
			$(a, b)$ is the midpoint of one of the sides $L_1$, $L_2$, and $L_3$.}
		
		Without any loss of generality, we can take $(a, b)$ as the midpoint of the side $L_1$, i.e., $(a, b)=(0, -\frac 12)$. The midpoint of $(a, b)$ and $A_1$ is given by $(-\frac{\sqrt 3}{4}, -\frac 12)$ which is represented by the parameter $t=\frac 14$ on $L_1$. Hence, the distortion error in this case, due to symmetry, can be calculated as follows: 
		\begin{align*}
			V(P; \gg)&=4 \int_0^{\frac{1}{2}} \rho((\frac{1}{2} \sqrt{3} (2 t-1), -\frac{1}{2}), (-\frac{\sqrt 3}{2}, -\frac 12))\,dt+4 \int_0^{\frac{1}{4}} \rho((\frac{1}{2} \sqrt{3} (2 t-1),-\frac{1}{2}), (-\frac{\sqrt 3}{2}, -\frac 12))\,dt\\
			&=\frac 9 {16}.
		\end{align*}
		
		\tit{Case~2. $(a, b)$ is the center $(0, 0)$ of the unit circle.}
		
		In this case, by Figure~\ref{Fig00}, we see that the Voronoi region $M((0,0)|\gg)$ contains a segment $p_1q_1$ from the side $L_1$. Similarly, it also contains segments from $L_2$, and $L_3$, and so the Voronoi region $M((0,0)|\gg)$ has positive probability. Also, $(0,0)$ is the conditional expectation in its own Voronoi region. Thus, the elements in $\gg$ in this case satisfy the conditions given in  Proposition~\ref{prop0}. Notice that by a simple calculation it can be seen that the points $p_1$ and $q_1$ on $L_1$ can be obtained from the parametric representations of $L_1$ by putting $t=\frac 13$ and $t=\frac 23$, respectively. Hence, the distortion error in this case, due to symmetry, can be calculated as follows:
		\begin{align*}
			V(P; \gg)&=6 \int_0^{\frac{1}{3}} \rho((\frac{1}{2} \sqrt{3} (2 t-1), -\frac{1}{2}), (-\frac{\sqrt 3}{2}, -\frac 12))\,dt+6 \int_{\frac 13}^{\frac{1}{2}} \rho((\frac{1}{2} \sqrt{3} (2 t-1),-\frac{1}{2}), (0, 0))\,dt\\
			&=\frac 12.
		\end{align*}
		Comparing the two distortion errors, we see that the distortion error in Case~2 is smaller than the distortion error obtained in Case~1. Hence, $\ga_4=\gb\uu\set{(0, 0)}$ forms a conditional optimal set of four-points with quantization error $V_4=\frac 12$. Thus, the proof of the proposition is complete. 
	\end{proof} 
	
	\begin{figure} 
		\begin{tikzpicture}[line cap=round,line join=round,>=triangle 45,x=0.35 cm,y=0.35 cm]
			\clip(-5.472797830477084,-5.026156858833838) rectangle (5.25308689949056,5.274903060882107);
			\draw [line width=0.8 pt] (0.,0.) circle (1.41 cm);
			\draw [line width=0.8 pt] (0.,4.)-- (-3.4641016151377544,-2.);
			\draw [line width=0.8 pt] (3.4641016151377544,-2.)-- (0.,4.);
			\draw [line width=0.8 pt] (-3.4641016151377544,-2.)-- (3.4641016151377544,-2.);
			\draw [line width=0.4 pt,dash pattern=on 1pt off 1pt] (0.,0.) circle (0.814064 cm);
			\draw [line width=0.4 pt,dash pattern=on 1pt off 1pt] (0.,0.)-- (-1.1547005383792517,-2.);
			\draw [line width=0.4 pt,dash pattern=on 1pt off 1pt] (0.,0.)-- (1.1547005383792517,-2.);
			\draw (-4.99,-1.762) node[anchor=north west] {$A_1$};
			\draw (2.89,-1.762) node[anchor=north west] {$A_2$};
			\draw (-0.89,5.66) node[anchor=north west] {$A_3$};
			\draw (0.0,0.812) node[anchor=north west] {$O$};
			\draw (-1.71,-1.872) node[anchor=north west] {$p_1$};
			\draw (0.9,-1.74) node[anchor=north west] {$q_1$};
			\begin{scriptsize}
				\draw [fill=ffqqqq] (0.,0.) circle (1.5 pt);
				\draw [fill=ffqqqq] (-3.4641016151377544,-2.) circle (1.5pt );
				\draw [fill=ffqqqq] (3.4641016151377544,-2.) circle (1.5pt );
				\draw [fill=ffqqqq] (0.,4.) circle (1.5pt);
				\draw [fill=ffqqqq] (-1.1547005383792517,-2.) circle (1.5pt);
				\draw [fill=ffqqqq] (1.1547005383792517,-2.) circle (1.5pt);
				\draw [fill=ffqqqq] (1.1547005383792512,2.) circle (1.5pt);
				\draw [fill=ffqqqq] (2.3094010767585034,0.) circle (1.5pt);
				\draw [fill=ffqqqq] (-1.1547005383792512,2.) circle (1.5pt);
				\draw [fill=ffqqqq] (-2.3094010767585034,0.) circle (1.5pt);
			\end{scriptsize}
		\end{tikzpicture}
		\caption{$A_1, A_2, A_3$ and $O$ form the conditional optimal set of four-points in an equilateral triangle.} \label{Fig00}
	\end{figure}
	
	Let us now give the following two lemmas. Due to technicality not giving a detailed proof, we just give an outline of the proofs. 
	\begin{lemma}\label{imp1}
		If $k=3$, i.e., in the case of an equilateral triangle inscribed in a unit circle, for any $n\geq 5$, the elements in the conditional optimal set $\ga_n$ of $n$-points must lie on the sides of the equilateral triangle.
	\end{lemma}
	
	\begin{proof}
		Notice that the equilateral triangle is geometrically symmetric with respect to any of its medians. Hence, if $\ga_n$ is a conditional optimal set of $n$-points, and if all elements in $\ga_n$ do not lie on the sides of the equilateral triangle, then at least one element $a\in\ga_n$ must line on a median with $P(M(a|\ga_n))>0$. In that case it can be seen that the distortion error is larger than the distortion error when all elements in $\ga_n$ are on the sides of the equilateral triangle. Thus, we deduce the lemma. 
	\end{proof}
	
	\begin{lemma}\label{imp1}
		Let $k\geq 4$. Then, for any $n\geq k$, the elements in the conditional optimal set $\ga_n$ of $n$-points must lie on the sides of the regular $k$-gon.
	\end{lemma}
	
	\begin{proof}
		In this case, it can be seen that for any optimal set $\ga_n$ of $n$-points, if all the elements in $\ga_n$ are not on the sides of the polygon, then at least one of them will lie on any of its diagonals. Then, one can see that for the elements of $\ga_n$ which are on the diagonals either the Voronoi regions have zero probability, or the distortion error is larger than the distortion error if all elements in $\ga_n$ are on the sides of the polygon. Thus, we deduce the lemma. 
	\end{proof} 
	
	Let $\ga_n$ contain $n_i$ elements from a side $L_j$. Then, the set of all these $n_i$ elements from $L_j$, except the rightmost element, is denoted by $\ga_n\ii(L_j(n_i-1))$, and the corresponding distortion error due to these $n_i$ elements on $L_j$ is denoted by $V(P; \ga_n\ii(L_j(n_i)))$, where $1\leq i, j\leq k$. Notice that $n_i\geq 2$ for all $1\leq i\leq k$.  Then, notice that  
	by Proposition~\ref{Me0}, we have
	\begin{align}\label{Me42000}  
		\ga_n\ii L_1(n_i-1)=\Big\{\Big(\frac{(2 j-n_i-1) \sin \frac{\pi }{k}}{n_i-1},-\cos\frac{\pi }{k}\Big): 1\leq j\leq n_i-1\Big\}
	\end{align} 
	and due to rotational symmetry 
	\begin{align} \label{Me42100}
		V(P; \ga_n\ii(L_j(n_i)))=V(P; \ga_n\ii(L_1(n_i)))=\frac{2\sin ^3 \frac{\pi }{k}}{3 k (n_i-1)^2}.
	\end{align}

	Let us now give the following proposition which plays an important role in this section.  
	
	\begin{prop} \label{Me1} 
		Let $\ga_n$ be a conditional optimal set of $n$-points for $P$ for some $n\geq k$. Let $\te{card}(\ga_n\ii L_j)=n_j$ for $1\leq j\leq k$ and $T$ be the affine transformation. Then, 
		\[\ga_n\ii L_1(n_1-1)=\Big\{\Big(\frac{(2 j-n_1-1) \sin \frac{\pi }{k}}{n_1-1},-\cos\frac{\pi }{k}\Big): 1\leq j\leq n_1-1\Big\},\]
		and 
		\[\ga_n\ii L_j(n_j-1)=T^j\Big(\ga_n\ii L_1(n_j-1)\Big) \te{ for } 2\leq j\leq k,\]
		with the conditional quantization error  
		\[V_n:=V_{n_1, n_2, \cdots, n_k}(P)=\sum_{j=1}^k \frac{2 \sin ^3 \frac{\pi }{k}}{3 k (n_j-1)^2}.\]
	\end{prop}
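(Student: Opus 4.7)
The plan is to decompose the distortion error over the six sides of the hexagon, invoke Proposition~\ref{Me0} on each side individually, and use the rotational symmetry induced by $T$ to handle $L_j$ for $j\geq 2$. First I would observe that since $\gb \ci \ga_n$ by the definition of a conditional optimal set, and $\gb$ is exactly the vertex set of the hexagon, every side $L_j$ contains its two endpoint vertices in $\ga_n$. Combined with the cited fact that $\ga_n \ci L$, this lets me write $\ga_n$ as the union of the six sets $\ga_n \ii L_j$, with $\te{card}(\ga_n \ii L_j)=n_j \geq 2$ (vertices being shared between adjacent sides).

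The key geometric step is to show that for any $x \in L_j$ the nearest element of $\ga_n$ lies in $\ga_n \ii L_j$. Let $V$ be a vertex shared between $L_j$ and an adjacent side $L_k$, and let $y\in L_k$. The interior angle of the hexagon at $V$ equals $\frac{2\pi}{3}$, so by the law of cosines
\[|x-y|^2 = |x-V|^2 + |V-y|^2 + |x-V|\cdot|V-y| \geq |x-V|^2.\]
Since $V \in \ga_n \ii L_j$, no element on an adjacent side can be closer to $x$ than $V$. For non-adjacent sides, the minimum pairwise distance is bounded below by $1$, whereas on each $L_j$ the spacing between consecutive elements of $\ga_n \ii L_j$ is at most $1$, giving a strictly better same-side competitor. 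This yields the decomposition $V_n = \sum_{j=1}^6 V(P; \ga_n \ii L_j)$. Now Proposition~\ref{Me0} applied to $L_1$ directly pins down $\ga_n \ii L_1$ as the equally-spaced set claimed in the statement, with contribution $\frac{1}{72(n_1-1)^2}$. For $j\geq 2$, the affine map $T$ is an isometry sending $L_1$ onto $L_j$ via its iterates recorded in Section~\ref{secPre}, and it preserves the uniform distribution $P$; hence the optimum on $L_j$ with $n_j$ prescribed elements (including the two endpoint vertices) is the image under the appropriate power of $T$ of the optimum on $L_1$ with $n_j$ prescribed elements, contributing $\frac{1}{72(n_j-1)^2}$. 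Summing over $j$ yields the stated formula.

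The main obstacle is the Voronoi-nonmixing step: verifying that the nearest element of $\ga_n$ to a point on $L_j$ lies on $L_j$. Without it, the distortion would couple neighboring sides and the clean side-by-side summation would break. The $\frac{2\pi}{3}$-angle law-of-cosines computation above is elementary once written down, and everything that follows reduces routinely to Proposition~\ref{Me0} and the rotational invariance of $(L, P)$ under $T$.
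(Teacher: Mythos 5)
Your proof is correct and follows essentially the same route as the paper's: decompose the distortion error side by side, apply Proposition~\ref{Me0} on $L_1$, and transport the result to the other sides via the rotation $T$. The paper's own proof merely cites Proposition~\ref{Me0} and rotational symmetry, whereas your law-of-cosines argument at the $\frac{2\pi}{3}$ vertex angle (plus the distance-$\geq 1$ bound for non-adjacent sides) explicitly justifies the side-decoupling of the Voronoi regions that the paper leaves implicit, so your write-up is, if anything, more complete.
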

	\begin{proof}
		The proof of the expression for $\ga_n\ii L_1(n_1-1)$  follows from Proposition~\ref{Me0}. As stated in the relation \eqref{Me42000}, the proof of the expressions for $\ga_n\ii L_j(n_j-1)$ are the consequence of Proposition~\ref{Me0} and the affine transformation $T$. Due to rotational symmetry, as stated in relation \eqref{Me42100}, using Proposition~\ref{Me0}, we have 
		\[V(P; \ga_n\ii L_j(n_j))=V(P; \ga_n\ii L_1(n_j))=\frac{2 \sin ^3 \frac{\pi }{k}}{3 k (n_j-1)^2} \te{ for all } 1\leq j\leq k.\]
		Hence, 
		\[V_n:=V_{n_1, n_2, \cdots, n_k}(P)=\sum_{j=1}^k V(P; \ga_n\ii L_j(n_j))=\sum_{j=1}^k V(P; \ga_n\ii(L_1(n_j)))=\sum_{j=1}^k \frac{2 \sin ^3 \frac{\pi }{k}}{3 k (n_j-1)^2}.\]
		Thus, the proof of the proposition is complete. 
	\end{proof}

	\begin{lemma} \label{lemmaMe1} 
		Let $n_j\in \D N$ for $1\leq j\leq k$ be the numbers as defined in Proposition~\ref{Me1}. Then, $|n_i-n_j|\in \set{0,1}$ for $1\leq i\neq j\leq k$. 
	\end{lemma}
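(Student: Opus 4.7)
The plan is to reduce the lemma to a standard discrete convexity (Schur-convexity) argument and then argue by contradiction via an exchange. By Proposition~\ref{Me1}, the conditional optimal set $\ga_n$ is completely determined (up to affine symmetry on each side) by the 6-tuple $(n_1,\dots,n_6)$ of its per-side counts, and the associated quantization error depends on this tuple only through
\[
V_{n_1,\dots,n_6}(P)=\frac{1}{72}\sum_{j=1}^{6}\frac{1}{(n_j-1)^2}.
\]
Because every vertex of the hexagon lies in $\gb \ci \ga_n$, each $n_j \geq 2$, and because each vertex is shared by exactly two adjacent sides, the total cardinality constraint is
\[
n=\sum_{j=1}^{6}n_j-6=\sum_{j=1}^{6}(n_j-1),
\]
which is fixed once $n$ is fixed. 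So the problem becomes: minimize $\sum_j 1/(n_j-1)^2$ over 6-tuples of integers with $n_j\geq 2$ and $\sum (n_j-1)=n$.

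Next I would argue by contradiction. Suppose $\ga_n$ is optimal but $n_i-n_j\geq 2$ for some $i\neq j$. I would build a competitor 6-tuple $(n_1',\ldots,n_6')$ by setting $n_i':=n_i-1$, $n_j':=n_j+1$, and $n_k':=n_k$ for $k\neq i,j$. Note that $n_i'\geq n_j+1\geq 3\geq 2$ and $n_j'\geq 3\geq 2$, so the new tuple still has each entry $\geq 2$, keeps the same total, and therefore, by Proposition~\ref{Me1}, corresponds to an admissible conditional configuration of $n$ points on the hexagon whose quantization error is $V_{n_1',\dots,n_6'}(P)$. Optimality of $\ga_n$ then forces
\[
\frac{1}{(n_i-2)^2}+\frac{1}{n_j^2}\geq \frac{1}{(n_i-1)^2}+\frac{1}{(n_j-1)^2},
\]
and I would derive a contradiction by showing that in fact the strict reverse inequality holds.

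For the strict reverse inequality I would use strict convexity of $g(x)=1/x^2$ on $(0,\infty)$. Writing $a=n_i-1$ and $b=n_j-1$, the hypothesis $n_i-n_j\geq 2$ becomes $a\geq b+2$, and the inequality to establish is
\[
g(a-1)+g(b+1)<g(a)+g(b),
\]
i.e. the two-point "smoothing" $(a,b)\mapsto (a-1,b+1)$ strictly lowers $g(a)+g(b)$ whenever $a-b\geq 2$. This is the standard majorization fact: the pair $(a,b)$ strictly majorizes $(a-1,b+1)$ (same sum, more spread), and $\sum g$ with $g$ strictly convex is strictly Schur-convex. Equivalently, one can just rearrange the claim as
\[
\bigl[g(b)-g(b+1)\bigr]-\bigl[g(a-1)-g(a)\bigr]>0,
\]
and observe that $h(x):=g(x)-g(x+1)=(2x+1)/(x^2(x+1)^2)$ is strictly decreasing on $(0,\infty)$, so since $b\leq a-2<a-1$, we get $h(b)>h(a-1)$, as needed. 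This contradicts the supposed optimality of $\ga_n$, completing the proof.

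I expect no real obstacle beyond the convexity computation; the only point requiring a moment of care is the bookkeeping that ensures the exchanged tuple still satisfies $n_k'\geq 2$ for every $k$, i.e., that both endpoints of every side (which are vertices of $\gb$ and hence forced members of $\ga_n'$) remain present.
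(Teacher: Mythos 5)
Your proof is correct and follows essentially the same route as the paper's: both reduce the claim to the fact that, for a fixed value of $(n_i-1)+(n_j-1)$, the sum $\frac{1}{(n_i-1)^2}+\frac{1}{(n_j-1)^2}$ is minimized by the most balanced integer split. The only difference is that the paper simply asserts this balancing step ("the expression is minimum if $n_1-1\approx \frac m2$"), whereas you justify it rigorously via the exchange/convexity argument with $h(x)=g(x)-g(x+1)$ strictly decreasing, together with the check that the exchanged tuple remains admissible.
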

	
	\begin{proof} We first show that $|n_1-n_2|\in \set{0,1}$. 
		Write $m:=(n_1-1)+(n_2-1)$. Now, the distortion error contributed by the $m$ elements in $\ga_n(L_1)\uu \ga_n(L_2)$ is given by 
		\[\frac{2 \sin ^3 \frac{\pi }{k}}{3k} \Big(\frac 1{(n_1-1)^2}+\frac 1{(n_2-1)^2}\Big).\]
		The above expression is minimum if $n_1-1\approx \frac m 2$ and $n_2-1\approx \frac m 2$. Thus, we see that if $m=2\ell$ for some positive integer $\ell$, then $n_1-1=n_2-1=\ell$, and if $m=2\ell+1$ for some positive integer $\ell$, then either $(n_1-1=\ell+1$ and $n_2-1=\ell)$ or $(n_1-1=\ell$ and $n_2-1=\ell+1)$ which yields the fact that $|n_1-n_2|\in \set{0,1}$. Similarly, we can show that  $|n_i-n_j|\in \set{0,1}$ for any $1\leq i\neq j\leq k$. Thus, the proof of the lemma is complete. 
	\end{proof}

	\begin{theorem}\label{theo0}
		Let $\ga_n$ be a conditional optimal set of $n$-points for $P$ for some $n\geq k$. Then, 
		\begin{align*}
			\ga_n= \Big(\ga_n\ii L_1(n_1-1)\Big)\UU \Big(\UU_{j=1}^k T^j(\ga_n\ii L_1(n_j-1))\Big)
		\end{align*} 
		with the conditional quantization error 
		\[V_n=\sum_{j=1}^k \frac{2 \sin ^3 \frac{\pi }{k}}{3 k (n_j-1)^2},\]
		where $n_j$ are chosen in such a way that $n_j\geq 2$ and $\sum\limits_{j=1}^k (n_j-1)=n$, and $|n_i-n_j|\in \set{0,1}$ for $1\leq i\neq j\leq k$. 
	\end{theorem}
	\begin{proof}
		The proof of the theorem follows due to Proposition~\ref{Me1} and Lemma~\ref{lemmaMe1}.  
	\end{proof}
	\begin{remark}
		Let $\ga_n$ be a conditional optimal set of $n$-points for $P$ given by Theorem~\ref{theo0}. If $n=k\ell$ for some $\ell\in \D N$, then $\ga_n$ is unique. If $n=k\ell+j$ for some $j, \ell\in \D N$ with $1\leq j<k$, then the number of such sets is given by $\binom{k}{j}$, where $\binom{k}{j}=\frac{k!}{j!(k-j)!}$ for $1\leq j<k$. 
	\end{remark} 
	Using Theorem~\ref{theo0}, the following example can be obtained. 
	\begin{exam} Let $\ga_n$ be a conditional optimal set of $n$-points given by Theorem~\ref{theo0} for a uniform distribution on a regular hexagon, i.e., when $k=6$. Then, we obtain the optimal sets as follows (see Figure~\ref{Fig2}): 
		\begin{align*}
			\ga_6&=\{(-\frac 12, -\frac{\sqrt 3}{2}), (\frac 12, -\frac{\sqrt 3}{2}), (1, 0), (\frac 12, \frac{\sqrt 3}{2}), (-\frac 12, \frac{\sqrt 3}{2}), (-1, 0)\}  \\
			&\qquad \te{ with quantization error } V_6=\frac  1{12},\\
			\ga_7&=\{(-\frac 12, -\frac{\sqrt 3}{2}), (0, -\frac{\sqrt 3}{2}), (\frac 12, -\frac{\sqrt 3}{2}), (1, 0), (\frac 12, \frac{\sqrt 3}{2}), (-\frac 12, \frac{\sqrt 3}{2}), (-1, 0)\}  \\
			&\qquad \te{ with quantization error } V_7=\frac  7{96},\\
			\ga_8&=\{(-\frac 12, -\frac{\sqrt 3}{2}), (0, -\frac{\sqrt 3}{2}), (\frac 12, -\frac{\sqrt 3}{2}), (\frac 34, -\frac{\sqrt 3} 4), (1, 0), (\frac 12, \frac{\sqrt 3}{2}), (-\frac 12, \frac{\sqrt 3}{2}), (-1, 0)\} \\
			&\qquad \te{ with quantization error } V_8=\frac  1{16},\\
			\ga_9&=\{(-\frac 12, -\frac{\sqrt 3}{2}), (0, -\frac{\sqrt 3}{2}), (\frac 12, -\frac{\sqrt 3}{2}), (\frac 34, -\frac{\sqrt 3} 4), (1, 0), (\frac 34,\frac{\sqrt 3} 4),  (\frac 12, \frac{\sqrt 3}{2}), (-\frac 12, \frac{\sqrt 3}{2}), (-1, 0)\} \\
			&\qquad \te{ with quantization error } V_9=\frac  5{96},\\
			&\te{ and so on.}
		\end{align*}  
	\end{exam}

	\begin{figure}
		\begin{tikzpicture}[line cap=round,line join=round,>=triangle 45,x=0.35 cm,y=0.35 cm]
			\clip(-4.472797830477084,-4.626156858833838) rectangle (4.25308689949056,4.274903060882107);
			\draw [line width=0.8 pt] (-2.,3.4641016151377544)-- (-4.,0.)-- (-2.,-3.4641016151377544)-- (2.,-3.4641016151377544)-- (4.,0.)-- (2.,3.4641016151377544);
			\draw [line width=0.8 pt] (-2.,3.4641016151377544)-- (2.,3.4641016151377544);
			\draw [line width=0.8 pt] (0.,0.) circle (1.41 cm);
			\draw [line width=0.4 pt,dash pattern=on 1pt off 1pt] (0.,0.)-- (-2.,-3.4641016151377544);
			\draw [line width=0.4 pt,dash pattern=on 1pt off 1pt] (0.,0.)-- (2.,-3.4641016151377544);
			\draw [line width=0.4 pt,dash pattern=on 1pt off 1pt] (0.,0.)-- (2.,3.4641016151377544);
			\draw [line width=0.4 pt,dash pattern=on 1pt off 1pt] (0.,0.)-- (-4.,0.);
			\draw [line width=0.4 pt,dash pattern=on 1pt off 1pt] (0.,0.)-- (4.,0.);
			\draw [line width=0.4 pt,dash pattern=on 1pt off 1pt] (0.,-0.02)-- (-2.,3.4441016151377544);
			\begin{scriptsize}
				\draw [fill=ffqqqq] (-2.,-3.4641016151377544) circle (1.5pt);
				\draw [fill=ffqqqq] (2.,-3.4641016151377544) circle (1.5pt);
				\draw [fill=ffqqqq] (4.,0.) circle (1.5pt);
				\draw [fill=ffqqqq] (2.,3.4641016151377544) circle (1.5pt);
				\draw [fill=ffqqqq] (-2.,3.4641016151377544) circle (1.5pt);
				\draw [fill=ffqqqq] (-4.,0.) circle (1.5pt);
			\end{scriptsize}
		\end{tikzpicture}
		\begin{tikzpicture}[line cap=round,line join=round,>=triangle 45,x=0.35 cm,y=0.35 cm]
			\clip(-4.472797830477084,-4.626156858833838) rectangle (4.25308689949056,4.274903060882107);
			\draw [line width=0.8 pt] (-2.,3.4641016151377544)-- (-4.,0.)-- (-2.,-3.4641016151377544)-- (2.,-3.4641016151377544)-- (4.,0.)-- (2.,3.4641016151377544);
			\draw [line width=0.8 pt] (-2.,3.4641016151377544)-- (2.,3.4641016151377544);
			\draw [line width=0.8 pt] (0.,0.) circle (1.41 cm);
			\draw [line width=0.4 pt,dash pattern=on 1pt off 1pt] (0.,0.)-- (-2.,-3.4641016151377544);
			\draw [line width=0.4 pt,dash pattern=on 1pt off 1pt] (0.,0.)-- (2.,-3.4641016151377544);
			\draw [line width=0.4 pt,dash pattern=on 1pt off 1pt] (0.,0.)-- (2.,3.4641016151377544);
			\draw [line width=0.4 pt,dash pattern=on 1pt off 1pt] (0.,0.)-- (-4.,0.);
			\draw [line width=0.4 pt,dash pattern=on 1pt off 1pt] (0.,0.)-- (4.,0.);
			\draw [line width=0.4 pt,dash pattern=on 1pt off 1pt] (0.,-0.02)-- (-2.,3.4441016151377544);
			\begin{scriptsize}
				\draw [fill=ffqqqq] (-2.,-3.4641016151377544) circle (1.5pt);
				\draw [fill=ffqqqq] (0.,-3.4641016151377544) circle (1.5pt);
				\draw [fill=ffqqqq] (2.,-3.4641016151377544) circle (1.5pt);
				\draw [fill=ffqqqq] (4.,0.) circle (1.5pt);
				\draw [fill=ffqqqq] (2.,3.4641016151377544) circle (1.5pt);
				\draw [fill=ffqqqq] (-2.,3.4641016151377544) circle (1.5pt);
				\draw [fill=ffqqqq] (-4.,0.) circle (1.5pt);
			\end{scriptsize}
		\end{tikzpicture} 
		\begin{tikzpicture}[line cap=round,line join=round,>=triangle 45,x=0.35 cm,y=0.35 cm]
			\clip(-4.472797830477084,-4.626156858833838) rectangle (4.25308689949056,4.274903060882107);
			\draw [line width=0.8 pt] (-2.,3.4641016151377544)-- (-4.,0.)-- (-2.,-3.4641016151377544)-- (2.,-3.4641016151377544)-- (4.,0.)-- (2.,3.4641016151377544);
			\draw [line width=0.8 pt] (-2.,3.4641016151377544)-- (2.,3.4641016151377544);
			\draw [line width=0.8 pt] (0.,0.) circle (1.41 cm);
			\draw [line width=0.4 pt,dash pattern=on 1pt off 1pt] (0.,0.)-- (-2.,-3.4641016151377544);
			\draw [line width=0.4 pt,dash pattern=on 1pt off 1pt] (0.,0.)-- (2.,-3.4641016151377544);
			\draw [line width=0.4 pt,dash pattern=on 1pt off 1pt] (0.,0.)-- (2.,3.4641016151377544);
			\draw [line width=0.4 pt,dash pattern=on 1pt off 1pt] (0.,0.)-- (-4.,0.);
			\draw [line width=0.4 pt,dash pattern=on 1pt off 1pt] (0.,0.)-- (4.,0.);
			\draw [line width=0.4 pt,dash pattern=on 1pt off 1pt] (0.,-0.02)-- (-2.,3.4441016151377544);
			\begin{scriptsize}
				\draw [fill=ffqqqq] (-2.,-3.4641016151377544) circle (1.5pt);
				\draw [fill=ffqqqq] (2.,-3.4641016151377544) circle (1.5pt);
				\draw [fill=ffqqqq] (0.,-3.4641016151377544) circle (1.5pt);
				\draw [fill=ffqqqq] (3., -1.73205) circle (1.5pt);
				\draw [fill=ffqqqq] (4.,0.) circle (1.5pt);
				\draw [fill=ffqqqq] (2.,3.4641016151377544) circle (1.5pt);
				\draw [fill=ffqqqq] (-2.,3.4641016151377544) circle (1.5pt);
				\draw [fill=ffqqqq] (-4.,0.) circle (1.5pt);
			\end{scriptsize}
		\end{tikzpicture}
		\begin{tikzpicture}[line cap=round,line join=round,>=triangle 45,x=0.35 cm,y=0.35 cm]
			\clip(-4.472797830477084,-4.626156858833838) rectangle (4.25308689949056,4.274903060882107);
			\draw [line width=0.8 pt] (-2.,3.4641016151377544)-- (-4.,0.)-- (-2.,-3.4641016151377544)-- (2.,-3.4641016151377544)-- (4.,0.)-- (2.,3.4641016151377544);
			\draw [line width=0.8 pt] (-2.,3.4641016151377544)-- (2.,3.4641016151377544);
			\draw [line width=0.8 pt] (0.,0.) circle (1.41 cm);
			\draw [line width=0.4 pt,dash pattern=on 1pt off 1pt] (0.,0.)-- (-2.,-3.4641016151377544);
			\draw [line width=0.4 pt,dash pattern=on 1pt off 1pt] (0.,0.)-- (2.,-3.4641016151377544);
			\draw [line width=0.4 pt,dash pattern=on 1pt off 1pt] (0.,0.)-- (2.,3.4641016151377544);
			\draw [line width=0.4 pt,dash pattern=on 1pt off 1pt] (0.,0.)-- (-4.,0.);
			\draw [line width=0.4 pt,dash pattern=on 1pt off 1pt] (0.,0.)-- (4.,0.);
			\draw [line width=0.4 pt,dash pattern=on 1pt off 1pt] (0.,-0.02)-- (-2.,3.4441016151377544);
			\begin{scriptsize}
				\draw [fill=ffqqqq] (-2.,-3.4641016151377544) circle (1.5pt);
				\draw [fill=ffqqqq] (2.,-3.4641016151377544) circle (1.5pt);
				\draw [fill=ffqqqq] (0.,-3.4641016151377544) circle (1.5pt);
				\draw [fill=ffqqqq] (3., -1.73205) circle (1.5pt);
				\draw [fill=ffqqqq] (3., 1.73205) circle (1.5pt);
				\draw [fill=ffqqqq] (4.,0.) circle (1.5pt);
				\draw [fill=ffqqqq] (2.,3.4641016151377544) circle (1.5pt);
				\draw [fill=ffqqqq] (-2.,3.4641016151377544) circle (1.5pt);
				\draw [fill=ffqqqq] (-4.,0.) circle (1.5pt);
			\end{scriptsize}
		\end{tikzpicture}\\
		\begin{tikzpicture}[line cap=round,line join=round,>=triangle 45,x=0.35 cm,y=0.35 cm]
			\clip(-4.472797830477084,-4.626156858833838) rectangle (4.25308689949056,4.274903060882107);
			\draw [line width=0.8 pt] (-2.,3.4641016151377544)-- (-4.,0.)-- (-2.,-3.4641016151377544)-- (2.,-3.4641016151377544)-- (4.,0.)-- (2.,3.4641016151377544);
			\draw [line width=0.8 pt] (-2.,3.4641016151377544)-- (2.,3.4641016151377544);
			\draw [line width=0.8 pt] (0.,0.) circle (1.41 cm);
			\draw [line width=0.4 pt,dash pattern=on 1pt off 1pt] (0.,0.)-- (-2.,-3.4641016151377544);
			\draw [line width=0.4 pt,dash pattern=on 1pt off 1pt] (0.,0.)-- (2.,-3.4641016151377544);
			\draw [line width=0.4 pt,dash pattern=on 1pt off 1pt] (0.,0.)-- (2.,3.4641016151377544);
			\draw [line width=0.4 pt,dash pattern=on 1pt off 1pt] (0.,0.)-- (-4.,0.);
			\draw [line width=0.4 pt,dash pattern=on 1pt off 1pt] (0.,0.)-- (4.,0.);
			\draw [line width=0.4 pt,dash pattern=on 1pt off 1pt] (0.,-0.02)-- (-2.,3.4441016151377544);
			\begin{scriptsize}
				\draw [fill=ffqqqq] (-2.,-3.4641016151377544) circle (1.5pt);
				\draw [fill=ffqqqq] (2.,-3.4641016151377544) circle (1.5pt);
				\draw [fill=ffqqqq] (0.,-3.4641016151377544) circle (1.5pt);
				\draw [fill=ffqqqq] (3., -1.73205) circle (1.5pt);
				\draw [fill=ffqqqq] (3., 1.73205) circle (1.5pt);
				\draw [fill=ffqqqq] (0.,3.4641016151377544) circle (1.5pt);
				\draw [fill=ffqqqq] (4.,0.) circle (1.5pt);
				\draw [fill=ffqqqq] (2.,3.4641016151377544) circle (1.5pt);
				\draw [fill=ffqqqq] (-2.,3.4641016151377544) circle (1.5pt);
				\draw [fill=ffqqqq] (-4.,0.) circle (1.5pt);
			\end{scriptsize}
		\end{tikzpicture}
		\begin{tikzpicture}[line cap=round,line join=round,>=triangle 45,x=0.35 cm,y=0.35 cm]
			\clip(-4.472797830477084,-4.626156858833838) rectangle (4.25308689949056,4.274903060882107);
			\draw [line width=0.8 pt] (-2.,3.4641016151377544)-- (-4.,0.)-- (-2.,-3.4641016151377544)-- (2.,-3.4641016151377544)-- (4.,0.)-- (2.,3.4641016151377544);
			\draw [line width=0.8 pt] (-2.,3.4641016151377544)-- (2.,3.4641016151377544);
			\draw [line width=0.8 pt] (0.,0.) circle (1.41 cm);
			\draw [line width=0.4 pt,dash pattern=on 1pt off 1pt] (0.,0.)-- (-2.,-3.4641016151377544);
			\draw [line width=0.4 pt,dash pattern=on 1pt off 1pt] (0.,0.)-- (2.,-3.4641016151377544);
			\draw [line width=0.4 pt,dash pattern=on 1pt off 1pt] (0.,0.)-- (2.,3.4641016151377544);
			\draw [line width=0.4 pt,dash pattern=on 1pt off 1pt] (0.,0.)-- (-4.,0.);
			\draw [line width=0.4 pt,dash pattern=on 1pt off 1pt] (0.,0.)-- (4.,0.);
			\draw [line width=0.4 pt,dash pattern=on 1pt off 1pt] (0.,-0.02)-- (-2.,3.4441016151377544);
			\begin{scriptsize}
				\draw [fill=ffqqqq] (-2.,-3.4641016151377544) circle (1.5pt);
				\draw [fill=ffqqqq] (2.,-3.4641016151377544) circle (1.5pt);
				\draw [fill=ffqqqq] (0.,-3.4641016151377544) circle (1.5pt);
				\draw [fill=ffqqqq] (3., -1.73205) circle (1.5pt);
				\draw [fill=ffqqqq] (3., 1.73205) circle (1.5pt);
				\draw [fill=ffqqqq] (-3., 1.73205) circle (1.5pt);
				\draw [fill=ffqqqq] (0.,3.4641016151377544) circle (1.5pt);
				\draw [fill=ffqqqq] (4.,0.) circle (1.5pt);
				\draw [fill=ffqqqq] (2.,3.4641016151377544) circle (1.5pt);
				\draw [fill=ffqqqq] (-2.,3.4641016151377544) circle (1.5pt);
				\draw [fill=ffqqqq] (-4.,0.) circle (1.5pt);
			\end{scriptsize}
		\end{tikzpicture}
		\begin{tikzpicture}[line cap=round,line join=round,>=triangle 45,x=0.35 cm,y=0.35 cm]
			\clip(-4.472797830477084,-4.626156858833838) rectangle (4.25308689949056,4.274903060882107);
			\draw [line width=0.8 pt] (-2.,3.4641016151377544)-- (-4.,0.)-- (-2.,-3.4641016151377544)-- (2.,-3.4641016151377544)-- (4.,0.)-- (2.,3.4641016151377544);
			\draw [line width=0.8 pt] (-2.,3.4641016151377544)-- (2.,3.4641016151377544);
			\draw [line width=0.8 pt] (0.,0.) circle (1.41 cm);
			\draw [line width=0.4 pt,dash pattern=on 1pt off 1pt] (0.,0.)-- (-2.,-3.4641016151377544);
			\draw [line width=0.4 pt,dash pattern=on 1pt off 1pt] (0.,0.)-- (2.,-3.4641016151377544);
			\draw [line width=0.4 pt,dash pattern=on 1pt off 1pt] (0.,0.)-- (2.,3.4641016151377544);
			\draw [line width=0.4 pt,dash pattern=on 1pt off 1pt] (0.,0.)-- (-4.,0.);
			\draw [line width=0.4 pt,dash pattern=on 1pt off 1pt] (0.,0.)-- (4.,0.);
			\draw [line width=0.4 pt,dash pattern=on 1pt off 1pt] (0.,-0.02)-- (-2.,3.4441016151377544);
			\begin{scriptsize}
				\draw [fill=ffqqqq] (-2.,-3.4641016151377544) circle (1.5pt);
				\draw [fill=ffqqqq] (2.,-3.4641016151377544) circle (1.5pt);
				\draw [fill=ffqqqq] (0.,-3.4641016151377544) circle (1.5pt);
				\draw [fill=ffqqqq] (3., -1.73205) circle (1.5pt);
				\draw [fill=ffqqqq] (3., 1.73205) circle (1.5pt);
				\draw [fill=ffqqqq] (-3., 1.73205) circle (1.5pt);
				\draw [fill=ffqqqq] (-3., -1.73205) circle (1.5pt);
				\draw [fill=ffqqqq] (0.,3.4641016151377544) circle (1.5pt);
				\draw [fill=ffqqqq] (4.,0.) circle (1.5pt);
				\draw [fill=ffqqqq] (2.,3.4641016151377544) circle (1.5pt);
				\draw [fill=ffqqqq] (-2.,3.4641016151377544) circle (1.5pt);
				\draw [fill=ffqqqq] (-4.,0.) circle (1.5pt);
			\end{scriptsize}
		\end{tikzpicture}
		\begin{tikzpicture}[line cap=round,line join=round,>=triangle 45,x=0.35 cm,y=0.35 cm]
			\clip(-4.472797830477084,-4.626156858833838) rectangle (4.25308689949056,4.274903060882107);
			\draw [line width=0.8 pt] (-2.,3.4641016151377544)-- (-4.,0.)-- (-2.,-3.4641016151377544)-- (2.,-3.4641016151377544)-- (4.,0.)-- (2.,3.4641016151377544);
			\draw [line width=0.8 pt] (-2.,3.4641016151377544)-- (2.,3.4641016151377544);
			\draw [line width=0.8 pt] (0.,0.) circle (1.41 cm);
			\draw [line width=0.4 pt,dash pattern=on 1pt off 1pt] (0.,0.)-- (-2.,-3.4641016151377544);
			\draw [line width=0.4 pt,dash pattern=on 1pt off 1pt] (0.,0.)-- (2.,-3.4641016151377544);
			\draw [line width=0.4 pt,dash pattern=on 1pt off 1pt] (0.,0.)-- (2.,3.4641016151377544);
			\draw [line width=0.4 pt,dash pattern=on 1pt off 1pt] (0.,0.)-- (-4.,0.);
			\draw [line width=0.4 pt,dash pattern=on 1pt off 1pt] (0.,0.)-- (4.,0.);
			\draw [line width=0.4 pt,dash pattern=on 1pt off 1pt] (0.,-0.02)-- (-2.,3.4441016151377544);
			\begin{scriptsize}
				\draw [fill=ffqqqq] (-2.,-3.4641016151377544) circle (1.5pt);
				\draw [fill=ffqqqq] (2.,-3.4641016151377544) circle (1.5pt);
				\draw [fill=ffqqqq] (-0.666667,-3.4641016151377544) circle (1.5pt);
				\draw [fill=ffqqqq] (0.666667,-3.4641016151377544) circle (1.5pt);
				\draw [fill=ffqqqq] (3., -1.73205) circle (1.5pt);
				\draw [fill=ffqqqq] (3., 1.73205) circle (1.5pt);
				\draw [fill=ffqqqq] (-3., 1.73205) circle (1.5pt);
				\draw [fill=ffqqqq] (-3., -1.73205) circle (1.5pt);
				\draw [fill=ffqqqq] (0.,3.4641016151377544) circle (1.5pt);
				\draw [fill=ffqqqq] (4.,0.) circle (1.5pt);
				\draw [fill=ffqqqq] (2.,3.4641016151377544) circle (1.5pt);
				\draw [fill=ffqqqq] (-2.,3.4641016151377544) circle (1.5pt);
				\draw [fill=ffqqqq] (-4.,0.) circle (1.5pt);
			\end{scriptsize}
		\end{tikzpicture}
		\caption{In conditional quantization the dots represent the elements in an optimal set of $n$-points for $6\leq n\leq 13$.} \label{Fig2}
	\end{figure}

	\begin{theorem}\label{theo3} Let $P$ be a uniform distribution on a regular polygon with $k$ sides. Then, 
		the conditional quantization dimension $D(P)$ of the probability distribution $P$ exists, and $D(P)=1$, and the $D(P)$-dimensional conditional quantization coefficient for $P$ exists as a finite positive number and equals $\frac 23 k^2\sin^3\frac {\pi}{k}$. 
	\end{theorem}
	
	\begin{proof}
		For $n\in \D N$ with $n\geq k$, let $\ell(n)$ be the unique natural number such that $k{\ell(n)}\leq n<k({\ell(n)+1})$. Then,
		$V_{k(\ell(n)+1)}\leq V_n\leq V_{k {\ell(n)}}$, i.e., $\frac {2\sin^3\frac{\pi}k} {3(\ell(n))^2}\leq V_n\leq  \frac {2\sin^3\frac{\pi}k} {3 (\ell(n)-1)^2}$ yielding 
		$V_\infty=\mathop{\lim}\limits_{n\to \infty} V_n=0$. Take $n$ large enough so that $ V_{k{\ell(n)}}-V_\infty<1$. Then,
		\[0<-\log (V_{k{\ell(n)}}-V_\infty)\leq -\log (V_n-V_\infty) \leq -\log (V_{k({\ell(n)+1})}-V_\infty)\]
		yielding
		\[\frac{2 \log k\ell(n)}{-\log (V_{k({\ell(n)+1})}-V_\infty)}\leq \frac{2\log n}{-\log (V_n-V_\infty)}\leq \frac{2 \log k(\ell(n)+1)}{-\log (V_{k{\ell(n)}}-V_\infty)}.\]
		Notice that
		\begin{align*}
			\lim_{n\to \infty}\frac{2 \log k\ell(n)}{-\log (V_{k({\ell(n)+1})}-V_\infty)}&=\lim_{n\to \infty} \frac{2 \log k\ell(n)}{-\log \frac {2\sin^3\frac{\pi}k} {3(\ell(n))^2}}=1, \te{ and }\\
			\lim_{n\to \infty} \frac{2 \log k(\ell(n)+1)}{-\log V_{k{\ell(n)}}}&=\lim_{n\to \infty} \frac{2 \log k(\ell(n)+1)}{-\log \frac {2\sin^3\frac{\pi}k} {3(\ell(n)-1)^2}}=1.
		\end{align*}
		Hence, $\lim_{n\to \infty}  \frac{2\log n}{-\log (V_n-V_\infty)}=1$, i.e., the conditional quantization dimension $D(P)$ of the probability measure $P$ exists and $D(P)=1$.
		Since
		\begin{align*}
			&\lim_{n\to \infty} n^2 (V_n-V_\infty)\geq \lim_{n\to \infty} (k\ell(n))^2 (V_{k(\ell(n)+1)}-V_\infty)=\lim_{n\to\infty}(k\ell(n))^2\frac {2\sin^3\frac {\pi}{k}}{3(\ell(n))^2}=\frac 23 k^2\sin^3\frac {\pi}{k}, \te{ and } \\
			&\lim_{n\to \infty} n^2 (V_n-V_\infty)\leq \lim_{n\to \infty} (k(\ell(n)+1))^2 (V_{k\ell(n)}-V_\infty)=\lim_{n\to\infty}(k(\ell(n)+1))^2\frac {2\sin^3\frac {\pi}{k}}{3(\ell(n)-1)^2}=\frac 23 k^2\sin^3\frac {\pi}{k},
		\end{align*}
		by the squeeze theorem, we have $\lim_{n\to \infty} n^2 (V_n-V_\infty)=\frac 23 k^2\sin^3\frac {\pi}{k}$, i.e., the $D(P)$-dimensional conditional quantization coefficient for $P$ exists as a finite positive number and equals $\frac 23 k^2\sin^3\frac {\pi}{k}$.  Thus, the proof of the theorem is complete. 
	\end{proof}

	\begin{remark}
		By Theorem~\ref{theo3}, we see that the conditional quantization dimension $D(P)$ of the uniform distribution $P$ on a regular polygon with $k$ sides is a constant, it does not depend on $k$ and equals one which is the dimension of the underlying space where the support of the probability measure is defined. On the other hand, the conditional quantization coefficient is not constant, it exists as a finite positive number $\frac 23 k^2\sin^3\frac {\pi}{k}$ that depends on the number of sides $k$ of the regular polygon. Moreover, notice that the conditional quantization coefficient $\frac 23 k^2\sin^3\frac {\pi}{k}$ is a strictly decreasing sequence and converges to zero as $k$ approaches to infinity. 
	\end{remark} 
	
	\section{Conditional constrained quantization for the uniform distribution $P$ on the regular $k$-gon when the constraint is the circumcircle} \label{sec3} 
	In this section, for all $n\geq k$, we calculate the conditional constrained optimal sets $\ga_n$ of $n$-points and the corresponding $n$th conditional constrained quantization errors $V_n$ for the uniform distribution $P$ on the regular polygon with $k$ sides with respect to the conditional set $\gb$ and the constraint $S$ which is the circumcircle of the polygon. Let $O$ be the center of the circumcircle. Let $S_j$ be the circular arcs subtended by the chords $A_jA_{j+1}$ for $1\leq j\leq k$, where $A_{j+1}$ is identified as $A_1$,  (see Figure~\ref{Fig1} for $k=6$). Then, 
	\[S=\UU_{j=1}^k S_j.\]
	The arcs $S_j$ are represented by the parametric equations as follows:
	\[S_j=\Big\{(\cos \gq, \sin \gq) : \frac {3\pi}{2}+\frac {(2j-3)\pi}{k}\leq  \gq\leq \frac {3\pi}{2}+\frac {(2j-1)\pi}{k}\Big\} \]
	for $1\leq j\leq k$. Also notice that for the affine transformation $T$, defined in Section~\ref{secPre}, we have 
	\[T(S_j)=S_{j+1} \te{ for } 1\leq j\leq k,\]
	where $S_{k+1}$ is identified as $S_1$. 
	Let $\ga_n$ be an optimal set of $n$-points for $P$ for some $n\geq k$. Notice that if $n=k$, then $\ga_n=\gb$, where $\gb$ is the conditional set. Hence, in the sequel, we assume that $n\geq k+1$. 
	Let $\ga_n$ contain $n_i$ elements from $S_i$ including the end elements, i.e., $\te{card}(\ga_n\ii S_i)=n_i$, where $n_i \geq 2$ for $1\leq i\leq k$. Notice that $n_1+n_2+\cdots+n_k=n+k$.  To obtain $\ga_n\ii S_i$ and the corresponding distortion error $V(P; \ga_n\ii S_i)$ contributed by the elements in $\ga_n\ii S_i$ on $L_i$, where $\te{card}(\ga_n\ii S_i)=n_i$, it is enough to obtain $\ga_n\ii S_1$ and the corresponding distortion error $V(P; \ga_n\ii S_1)$ with $\te{card}(\ga_n\ii S_1)=n_i$, and then to use the affine transformation $T$, as we know
	\[\ga_n\ii S_i=T^{i-1}(\ga_n\ii S_1), \te{ where } \te{card}(\ga_n\ii S_1)=n_i, \te{ for } 2\leq i\leq k.\]
	Moreover, notice that $V(P; \ga_n\ii S_i)=V(P; \ga_n\ii S_1), \te{ where } \te{card}(\ga_n\ii S_1)=n_i$ and $V(P; \ga_n\ii S_1)$ denote the distortion error contributed by the $n_i$ elements on $L_1$. 
	Again, notice that if $\te{card}(\ga_n\ii S_i)=n_i=2$, then 
	\[\ga_n\ii S_i=T^{i-1}\Big(\set{A_1, A_2}\Big) \te{ for } 1\leq i\leq k.\]
	Let $\te{card}(\ga_n\ii S_1)=\ell$ with $\ell\geq 3$. 
	Let the elements in $\ga_n\ii S_1$ be represented by the parameters $\gq_1<\gq_1<\gq_2<\dots<\gq_\ell$, including the end elements $\gq_1=\frac{3 \pi }{2}-\frac{\pi }{k}$ and $\gq_\ell=\frac{3 \pi }{2}+\frac{\pi }{k}$, where $\ell\geq 3$. Let the boundary of the Voronoi regions of the elements $\gq_j$ and $\gq_{j+1}$ intersect the side $L_1$ at the elements $(a_j, -\frac {\sqrt 3}{2})$ for $1\leq j\leq \ell-1$. 
	Then, the canonical equations are given by 
	\[\rho((\cos\gq_j, \sin\gq_j), (a_j, -\frac {\sqrt 3}{2}))-\rho((\cos\gq_{j+1}, \sin\gq_{j+1}), (a_{j}, -\frac {\sqrt 3}{2}))=0\]
	for $1\leq j\leq \ell-1$. Solving the canonical equations, we have 
	\begin{align} \label{Megha122} 
		a_j= \frac{-\sin ^2 \theta _j +\sin ^2 \theta _{j+1} -\sqrt{3} \sin  \theta _j +\sqrt{3} \sin  \theta _{j+1} -\cos ^2 \theta _j +\cos ^2 \theta _{j+1} }{2 \cos \theta _{j+1} -2 \cos  \theta _j }
	\end{align}
	for $1\leq j\leq \ell-1$. Hence, if $V(P; \ga_n\ii S_1)$ is the distortion error contributed by the elements in $\ga_n\ii S_1$ on $L_1$, then we have 
	\begin{align}\label{Megha123} 
		V(P; \ga_n\ii S_1)&=\frac 1 k\Big(\int_{-\frac{1}{2}}^{a_1} \rho((x,-\frac{\sqrt{3}}{2}), (\cos\theta _1 ,\sin \theta _1)) \, dx+\sum_{j=1}^{\ell-2}\int_{a_j}^{a_{j+1}} \rho((x,-\frac{\sqrt{3}}{2}), (\cos\theta_{j+1} ,\sin \theta_{j+1})) \, dx\notag\\
		&\qquad +\int_{a_\ell}^{\frac 12} \rho((x,-\frac{\sqrt{3}}{2}), (\cos\theta _\ell ,\sin \theta _\ell)) \, dx \Big).
	\end{align} 
	\begin{method}\label{method101}
		To obtain the conditional constrained optimal sets $\ga_n$ of $n$-points and the $n$th conditional constrained quantization errors for the uniform distribution $P$ on the regular $k$-gon with respect to the conditional set $\gb$ and the constraint $S$ for any $n\geq k+1$, assume that $\te{card}(\ga_n\ii S_1)=\ell$, i.e., 
		\[\ga_n\ii S_1=\set{(\cos \gq_j, \sin \gq_j) : 1\leq j\leq \ell}\]
		for $\ell\geq 3$. Then, solving the canonical equations we obtain the boundary of the Voronoi regions of $(a_j, -\frac{\sqrt 3}2)$, where $a_j$ are given as in \eqref{Megha122}. Putting the values of $a_j$,  $\gq_1=\frac{3 \pi }{2}-\frac{\pi }{k}$, and $\gq_\ell=\frac{3 \pi }{2}-\frac{\pi }{k}$ in \eqref{Megha123}, we obtain the distortion error $V(P; \ga_n\ii S_1)$, which is a function of $\gq_j$ for $2\leq j\leq \ell-1$. Now, using calculus, we obtain the values of $\gq_j$ for $2\leq j\leq \ell-1$ for which $V(P; \ga_n\ii S_1)$ is minimum. Upon obtained the values of $\gq_j$, we can easily obtain
		$\ga_n\ii S_1$ and the corresponding distortion error $V(P; \ga_n\ii S_1)$. As described before due to the affine transformation, we can also obtain $\ga_n\ii S_i$ and the corresponding distortion error $V(P; \ga_n\ii S_i)$ for $2\leq i\leq k$. Thus, we have 
		\[\ga_n=\UU_{i=1}^k(\ga_n\ii S_i) \te{ with the $n$th constrained quantization error } V_n=\sum_{i=1}^k V(P; \ga_n\ii S_i).\]
	\end{method}
	
	Recall that $S=\mathop{\uu}\limits_{j=1}^k S_j$ and $S_j$ are rotationally symmetric, and $P$ is a uniform distribution. Hence, the following lemma, which is similar to Lemma~\ref{lemmaMe1}, is also true here.    
	\begin{lemma} \label{lemmaMe112} 
		Let $\ga_n$ be an optimal set of $n$-points with $\te{card}(\ga_n\ii S_i)=n_i$ for $1\leq i\leq k$. Then, $|n_i-n_j|\in \set{0,1}$ for $1\leq i\neq j\leq k$. 
	\end{lemma}
	
	\begin{figure}
		\begin{tikzpicture}[line cap=round,line join=round,>=triangle 45,x=0.35 cm,y=0.35 cm]
			\clip(-4.472797830477084,-4.626156858833838) rectangle (4.25308689949056,4.274903060882107);
			\draw [line width=0.8 pt] (-2.,3.4641016151377544)-- (-4.,0.)-- (-2.,-3.4641016151377544)-- (2.,-3.4641016151377544)-- (4.,0.)-- (2.,3.4641016151377544);
			\draw [line width=0.8 pt] (-2.,3.4641016151377544)-- (2.,3.4641016151377544);
			\draw [line width=0.8 pt] (0.,0.) circle (1.41 cm);
			\draw [line width=0.4 pt,dash pattern=on 1pt off 1pt] (0.,0.)-- (-2.,-3.4641016151377544);
			\draw [line width=0.4 pt,dash pattern=on 1pt off 1pt] (0.,0.)-- (2.,-3.4641016151377544);
			\draw [line width=0.4 pt,dash pattern=on 1pt off 1pt] (0.,0.)-- (2.,3.4641016151377544);
			\draw [line width=0.4 pt,dash pattern=on 1pt off 1pt] (0.,0.)-- (-4.,0.);
			\draw [line width=0.4 pt,dash pattern=on 1pt off 1pt] (0.,0.)-- (4.,0.);
			\draw [line width=0.4 pt,dash pattern=on 1pt off 1pt] (0.,-0.02)-- (-2.,3.4441016151377544);
			\begin{scriptsize}
				\draw [fill=ffqqqq] (-2.,-3.4641016151377544) circle (1.5pt);
				\draw [fill=ffqqqq] (2.,-3.4641016151377544) circle (1.5pt);
				\draw [fill=ffqqqq] (4.,0.) circle (1.5pt);
				\draw [fill=ffqqqq] (2.,3.4641016151377544) circle (1.5pt);
				\draw [fill=ffqqqq] (-2.,3.4641016151377544) circle (1.5pt);
				\draw [fill=ffqqqq] (-4.,0.) circle (1.5pt);
			\end{scriptsize}
		\end{tikzpicture}
		\begin{tikzpicture}[line cap=round,line join=round,>=triangle 45,x=0.35 cm,y=0.35 cm]
			\clip(-4.472797830477084,-4.626156858833838) rectangle (4.25308689949056,4.274903060882107);
			\draw [line width=0.8 pt] (-2.,3.4641016151377544)-- (-4.,0.)-- (-2.,-3.4641016151377544)-- (2.,-3.4641016151377544)-- (4.,0.)-- (2.,3.4641016151377544);
			\draw [line width=0.8 pt] (-2.,3.4641016151377544)-- (2.,3.4641016151377544);
			\draw [line width=0.8 pt] (0.,0.) circle (1.41 cm);
			\draw [line width=0.4 pt,dash pattern=on 1pt off 1pt] (0.,0.)-- (-2.,-3.4641016151377544);
			\draw [line width=0.4 pt,dash pattern=on 1pt off 1pt] (0.,0.)-- (2.,-3.4641016151377544);
			\draw [line width=0.4 pt,dash pattern=on 1pt off 1pt] (0.,0.)-- (2.,3.4641016151377544);
			\draw [line width=0.4 pt,dash pattern=on 1pt off 1pt] (0.,0.)-- (-4.,0.);
			\draw [line width=0.4 pt,dash pattern=on 1pt off 1pt] (0.,0.)-- (4.,0.);
			\draw [line width=0.4 pt,dash pattern=on 1pt off 1pt] (0.,-0.02)-- (-2.,3.4441016151377544);
			\begin{scriptsize}
				\draw [fill=ffqqqq] (-2.,-3.4641016151377544) circle (1.5pt);
				\draw [fill=ffqqqq] (2.,-3.4641016151377544) circle (1.5pt);
				\draw [fill=ffqqqq] (4.,0.) circle (1.5pt);
				\draw [fill=ffqqqq] (2.,3.4641016151377544) circle (1.5pt);
				\draw [fill=ffqqqq] (-2.,3.4641016151377544) circle (1.5pt);
				\draw [fill=ffqqqq] (-4.,0.) circle (1.5pt);
				\draw [fill=ffqqqq] (0., -4) circle (1.5pt);
			\end{scriptsize}
		\end{tikzpicture}
		\begin{tikzpicture}[line cap=round,line join=round,>=triangle 45,x=0.35 cm,y=0.35 cm]
			\clip(-4.472797830477084,-4.626156858833838) rectangle (4.25308689949056,4.274903060882107);
			\draw [line width=0.8 pt] (-2.,3.4641016151377544)-- (-4.,0.)-- (-2.,-3.4641016151377544)-- (2.,-3.4641016151377544)-- (4.,0.)-- (2.,3.4641016151377544);
			\draw [line width=0.8 pt] (-2.,3.4641016151377544)-- (2.,3.4641016151377544);
			\draw [line width=0.8 pt] (0.,0.) circle (1.41 cm);
			\draw [line width=0.4 pt,dash pattern=on 1pt off 1pt] (0.,0.)-- (-2.,-3.4641016151377544);
			\draw [line width=0.4 pt,dash pattern=on 1pt off 1pt] (0.,0.)-- (2.,-3.4641016151377544);
			\draw [line width=0.4 pt,dash pattern=on 1pt off 1pt] (0.,0.)-- (2.,3.4641016151377544);
			\draw [line width=0.4 pt,dash pattern=on 1pt off 1pt] (0.,0.)-- (-4.,0.);
			\draw [line width=0.4 pt,dash pattern=on 1pt off 1pt] (0.,0.)-- (4.,0.);
			\draw [line width=0.4 pt,dash pattern=on 1pt off 1pt] (0.,-0.02)-- (-2.,3.4441016151377544);
			\begin{scriptsize}
				\draw [fill=ffqqqq] (-2.,-3.4641016151377544) circle (1.5pt);
				\draw [fill=ffqqqq] (2.,-3.4641016151377544) circle (1.5pt);
				\draw [fill=ffqqqq] (4.,0.) circle (1.5pt);
				\draw [fill=ffqqqq] (2.,3.4641016151377544) circle (1.5pt);
				\draw [fill=ffqqqq] (-2.,3.4641016151377544) circle (1.5pt);
				\draw [fill=ffqqqq] (-4.,0.) circle (1.5pt);
				\draw [fill=ffqqqq] (0., -4) circle (1.5pt);
				\draw [fill=ffqqqq] (3.4641, -2.) circle (1.5pt);
			\end{scriptsize}
		\end{tikzpicture} 
		\begin{tikzpicture}[line cap=round,line join=round,>=triangle 45,x=0.35 cm,y=0.35 cm]
			\clip(-4.472797830477084,-4.626156858833838) rectangle (4.25308689949056,4.274903060882107);
			\draw [line width=0.8 pt] (-2.,3.4641016151377544)-- (-4.,0.)-- (-2.,-3.4641016151377544)-- (2.,-3.4641016151377544)-- (4.,0.)-- (2.,3.4641016151377544);
			\draw [line width=0.8 pt] (-2.,3.4641016151377544)-- (2.,3.4641016151377544);
			\draw [line width=0.8 pt] (0.,0.) circle (1.41 cm);
			\draw [line width=0.4 pt,dash pattern=on 1pt off 1pt] (0.,0.)-- (-2.,-3.4641016151377544);
			\draw [line width=0.4 pt,dash pattern=on 1pt off 1pt] (0.,0.)-- (2.,-3.4641016151377544);
			\draw [line width=0.4 pt,dash pattern=on 1pt off 1pt] (0.,0.)-- (2.,3.4641016151377544);
			\draw [line width=0.4 pt,dash pattern=on 1pt off 1pt] (0.,0.)-- (-4.,0.);
			\draw [line width=0.4 pt,dash pattern=on 1pt off 1pt] (0.,0.)-- (4.,0.);
			\draw [line width=0.4 pt,dash pattern=on 1pt off 1pt] (0.,-0.02)-- (-2.,3.4441016151377544);
			\begin{scriptsize}
				\draw [fill=ffqqqq] (-2.,-3.4641016151377544) circle (1.5pt);
				\draw [fill=ffqqqq] (2.,-3.4641016151377544) circle (1.5pt);
				\draw [fill=ffqqqq] (4.,0.) circle (1.5pt);
				\draw [fill=ffqqqq] (2.,3.4641016151377544) circle (1.5pt);
				\draw [fill=ffqqqq] (-2.,3.4641016151377544) circle (1.5pt);
				\draw [fill=ffqqqq] (-4.,0.) circle (1.5pt);
				\draw [fill=ffqqqq] (0., -4) circle (1.5pt);
				\draw [fill=ffqqqq] (3.4641, -2.) circle (1.5pt);
				\draw [fill=ffqqqq] (3.4641, 2.) circle (1.5pt);
			\end{scriptsize}
		\end{tikzpicture}\\
		\begin{tikzpicture}[line cap=round,line join=round,>=triangle 45,x=0.35 cm,y=0.35 cm]
			\clip(-4.472797830477084,-4.626156858833838) rectangle (4.25308689949056,4.274903060882107);
			\draw [line width=0.8 pt] (-2.,3.4641016151377544)-- (-4.,0.)-- (-2.,-3.4641016151377544)-- (2.,-3.4641016151377544)-- (4.,0.)-- (2.,3.4641016151377544);
			\draw [line width=0.8 pt] (-2.,3.4641016151377544)-- (2.,3.4641016151377544);
			\draw [line width=0.8 pt] (0.,0.) circle (1.41 cm);
			\draw [line width=0.4 pt,dash pattern=on 1pt off 1pt] (0.,0.)-- (-2.,-3.4641016151377544);
			\draw [line width=0.4 pt,dash pattern=on 1pt off 1pt] (0.,0.)-- (2.,-3.4641016151377544);
			\draw [line width=0.4 pt,dash pattern=on 1pt off 1pt] (0.,0.)-- (2.,3.4641016151377544);
			\draw [line width=0.4 pt,dash pattern=on 1pt off 1pt] (0.,0.)-- (-4.,0.);
			\draw [line width=0.4 pt,dash pattern=on 1pt off 1pt] (0.,0.)-- (4.,0.);
			\draw [line width=0.4 pt,dash pattern=on 1pt off 1pt] (0.,-0.02)-- (-2.,3.4441016151377544);
			\begin{scriptsize}
				\draw [fill=ffqqqq] (-2.,-3.4641016151377544) circle (1.5pt);
				\draw [fill=ffqqqq] (2.,-3.4641016151377544) circle (1.5pt);
				\draw [fill=ffqqqq] (4.,0.) circle (1.5pt);
				\draw [fill=ffqqqq] (2.,3.4641016151377544) circle (1.5pt);
				\draw [fill=ffqqqq] (-2.,3.4641016151377544) circle (1.5pt);
				\draw [fill=ffqqqq] (-4.,0.) circle (1.5pt);
				\draw [fill=ffqqqq] (0., -4) circle (1.5pt);
				\draw [fill=ffqqqq] (3.4641, -2.) circle (1.5pt);
				\draw [fill=ffqqqq] (3.4641, 2.) circle (1.5pt);
				\draw [fill=ffqqqq] (0., 4) circle (1.5pt);
			\end{scriptsize}
		\end{tikzpicture} 
		\begin{tikzpicture}[line cap=round,line join=round,>=triangle 45,x=0.35 cm,y=0.35 cm]
			\clip(-4.472797830477084,-4.626156858833838) rectangle (4.25308689949056,4.274903060882107);
			\draw [line width=0.8 pt] (-2.,3.4641016151377544)-- (-4.,0.)-- (-2.,-3.4641016151377544)-- (2.,-3.4641016151377544)-- (4.,0.)-- (2.,3.4641016151377544);
			\draw [line width=0.8 pt] (-2.,3.4641016151377544)-- (2.,3.4641016151377544);
			\draw [line width=0.8 pt] (0.,0.) circle (1.41 cm);
			\draw [line width=0.4 pt,dash pattern=on 1pt off 1pt] (0.,0.)-- (-2.,-3.4641016151377544);
			\draw [line width=0.4 pt,dash pattern=on 1pt off 1pt] (0.,0.)-- (2.,-3.4641016151377544);
			\draw [line width=0.4 pt,dash pattern=on 1pt off 1pt] (0.,0.)-- (2.,3.4641016151377544);
			\draw [line width=0.4 pt,dash pattern=on 1pt off 1pt] (0.,0.)-- (-4.,0.);
			\draw [line width=0.4 pt,dash pattern=on 1pt off 1pt] (0.,0.)-- (4.,0.);
			\draw [line width=0.4 pt,dash pattern=on 1pt off 1pt] (0.,-0.02)-- (-2.,3.4441016151377544);
			\begin{scriptsize}
				\draw [fill=ffqqqq] (-2.,-3.4641016151377544) circle (1.5pt);
				\draw [fill=ffqqqq] (2.,-3.4641016151377544) circle (1.5pt);
				\draw [fill=ffqqqq] (4.,0.) circle (1.5pt);
				\draw [fill=ffqqqq] (2.,3.4641016151377544) circle (1.5pt);
				\draw [fill=ffqqqq] (-2.,3.4641016151377544) circle (1.5pt);
				\draw [fill=ffqqqq] (-4.,0.) circle (1.5pt);
				\draw [fill=ffqqqq] (0., -4) circle (1.5pt);
				\draw [fill=ffqqqq] (3.4641, -2.) circle (1.5pt);
				\draw [fill=ffqqqq] (3.4641, 2.) circle (1.5pt);
				\draw [fill=ffqqqq] (0., 4) circle (1.5pt);
				\draw [fill=ffqqqq] (-3.4641, 2.) circle (1.5pt);
			\end{scriptsize}
		\end{tikzpicture}
		\begin{tikzpicture}[line cap=round,line join=round,>=triangle 45,x=0.35 cm,y=0.35 cm]
			\clip(-4.472797830477084,-4.626156858833838) rectangle (4.25308689949056,4.274903060882107);
			\draw [line width=0.8 pt] (-2.,3.4641016151377544)-- (-4.,0.)-- (-2.,-3.4641016151377544)-- (2.,-3.4641016151377544)-- (4.,0.)-- (2.,3.4641016151377544);
			\draw [line width=0.8 pt] (-2.,3.4641016151377544)-- (2.,3.4641016151377544);
			\draw [line width=0.8 pt] (0.,0.) circle (1.41 cm);
			\draw [line width=0.4 pt,dash pattern=on 1pt off 1pt] (0.,0.)-- (-2.,-3.4641016151377544);
			\draw [line width=0.4 pt,dash pattern=on 1pt off 1pt] (0.,0.)-- (2.,-3.4641016151377544);
			\draw [line width=0.4 pt,dash pattern=on 1pt off 1pt] (0.,0.)-- (2.,3.4641016151377544);
			\draw [line width=0.4 pt,dash pattern=on 1pt off 1pt] (0.,0.)-- (-4.,0.);
			\draw [line width=0.4 pt,dash pattern=on 1pt off 1pt] (0.,0.)-- (4.,0.);
			\draw [line width=0.4 pt,dash pattern=on 1pt off 1pt] (0.,-0.02)-- (-2.,3.4441016151377544);
			\begin{scriptsize}
				\draw [fill=ffqqqq] (-2.,-3.4641016151377544) circle (1.5pt);
				\draw [fill=ffqqqq] (2.,-3.4641016151377544) circle (1.5pt);
				\draw [fill=ffqqqq] (4.,0.) circle (1.5pt);
				\draw [fill=ffqqqq] (2.,3.4641016151377544) circle (1.5pt);
				\draw [fill=ffqqqq] (-2.,3.4641016151377544) circle (1.5pt);
				\draw [fill=ffqqqq] (-4.,0.) circle (1.5pt);
				\draw [fill=ffqqqq] (0., -4) circle (1.5pt);
				\draw [fill=ffqqqq] (3.4641, -2.) circle (1.5pt);
				\draw [fill=ffqqqq] (3.4641, 2.) circle (1.5pt);
				\draw [fill=ffqqqq] (0., 4) circle (1.5pt);
				\draw [fill=ffqqqq] (-3.4641, 2.) circle (1.5pt);
				\draw [fill=ffqqqq] (-3.4641, -2.) circle (1.5pt);
			\end{scriptsize}
		\end{tikzpicture}
		\begin{tikzpicture}[line cap=round,line join=round,>=triangle 45,x=0.35 cm,y=0.35 cm]
			\clip(-4.472797830477084,-4.626156858833838) rectangle (4.25308689949056,4.274903060882107);
			\draw [line width=0.8 pt] (-2.,3.4641016151377544)-- (-4.,0.)-- (-2.,-3.4641016151377544)-- (2.,-3.4641016151377544)-- (4.,0.)-- (2.,3.4641016151377544);
			\draw [line width=0.8 pt] (-2.,3.4641016151377544)-- (2.,3.4641016151377544);
			\draw [line width=0.8 pt] (0.,0.) circle (1.41 cm);
			\draw [line width=0.4 pt,dash pattern=on 1pt off 1pt] (0.,0.)-- (-2.,-3.4641016151377544);
			\draw [line width=0.4 pt,dash pattern=on 1pt off 1pt] (0.,0.)-- (2.,-3.4641016151377544);
			\draw [line width=0.4 pt,dash pattern=on 1pt off 1pt] (0.,0.)-- (2.,3.4641016151377544);
			\draw [line width=0.4 pt,dash pattern=on 1pt off 1pt] (0.,0.)-- (-4.,0.);
			\draw [line width=0.4 pt,dash pattern=on 1pt off 1pt] (0.,0.)-- (4.,0.);
			\draw [line width=0.4 pt,dash pattern=on 1pt off 1pt] (0.,-0.02)-- (-2.,3.4441016151377544);
			\begin{scriptsize}
				\draw [fill=ffqqqq] (-2.,-3.4641016151377544) circle (1.5pt);
				\draw [fill=ffqqqq] (2.,-3.4641016151377544) circle (1.5pt);
				\draw [fill=ffqqqq] (4.,0.) circle (1.5pt);
				\draw [fill=ffqqqq] (2.,3.4641016151377544) circle (1.5pt);
				\draw [fill=ffqqqq] (-2.,3.4641016151377544) circle (1.5pt);
				\draw [fill=ffqqqq] (-4.,0.) circle (1.5pt);
				\draw [fill=ffqqqq] (3.4641, -2.) circle (1.5pt);
				\draw [fill=ffqqqq] (3.4641, 2.) circle (1.5pt);
				\draw [fill=ffqqqq] (0., 4) circle (1.5pt);
				\draw [fill=ffqqqq] (-3.4641, 2.) circle (1.5pt);
				\draw [fill=ffqqqq] (-3.4641, -2.) circle (1.5pt);
				\draw [fill=ffqqqq] (-0.724332, -3.93387) circle (1.5pt);
				\draw [fill=ffqqqq] (0.724332, -3.93387) circle (1.5pt);
			\end{scriptsize}
		\end{tikzpicture}
		\caption{In conditional constrained quantization with the constraint as the circumcircle, for a uniform distribution on a regular hexagon, the dots represent the elements in an optimal set of $n$-points for $6\leq n\leq 13$.} \label{Fig3}
	\end{figure}

	The results in the following examples are obtained using Methodology~\ref{method101} and Lemma~\ref{lemmaMe112} (see Figure~\ref{Fig3}). 
	\begin{exam} \label{exam}  
		Let $P$ be a uniform distribution on a regular hexagon. 
		To obtain the optimal set $\ga_7$ and the corresponding conditional constrained quantization error $V_7$,  we proceed as follows: By Lemma~\ref{lemmaMe112}, we can assume that $\te{card}(\ga_7\ii S_1)=3$ and $\te{card}(\ga_7\ii S_j)=2$ for $2\leq j\leq 6$. Then, using Methodology~\ref{method101}, we obtain
		\[\ga_7\ii S_1=\set{(-\frac 12, -\frac{\sqrt 3}{2}), (0, 1), (\frac 12, -\frac{\sqrt 3}{2})} \te{ and } V(P; \ga_7\ii S_1)=\frac{\sqrt{3}}{2}-\frac{31}{36}\approx 0.00492.\]
		Moreover, we have 
		\[\ga_7\ii S_j=T^j(\set{(-\frac 12, -\frac{\sqrt 3}{2}), (\frac 12, -\frac{\sqrt 3}{2})}) \te{ with } V(\ga_7\ii S_j)=\frac 1{72}  \te{ for } 2\leq j\leq 6. \]
		Hence, 
		\begin{align*} \ga_7& =\UU_{j=1}^6\ga_7\ii S_j=\gb\UU\set{(0, -1)} \te{ with } \\
			V_7&=V(P; \ga_7\ii S_1)+\sum_{j=2}^6 V(P; \ga_7\ii S_j)=\frac{\sqrt{3}}{2}-\frac{31}{36}+\frac 5{72}=\frac{\sqrt{3}}{2}-\frac{19}{24}\approx 0.07436. 
		\end{align*} 
	\end{exam} 
	
	\begin{exam}  Let $P$ be a uniform distribution on a regular hexagon. Proceeding analogously as Example~\ref{exam}, we obtain the following results (see Figure~\ref{Fig3}). 
		\begin{align*}
			\ga_8&=\gb\uu\set{(0, -1), T(0, -1)} \te{ with } V_8=2(\frac{\sqrt{3}}{2}-\frac{31}{36})+\frac 4{72}\approx 0.0653841; \\
			\ga_9&=\gb\uu\set{(0, -1), T(0, -1), T^2(0, -1)} \te{ with } V_9=3(\frac{\sqrt{3}}{2}-\frac{31}{36})+\frac 3{72}\approx 0.0564095 ; \\
			\ga_{10}&=\gb\uu\set{(0, -1), T(0, -1), T^2(0, -1), T^3(0, -1)} \te{ with } V_{10}=4(\frac{\sqrt{3}}{2}-\frac{31}{36})+\frac 2{72}\approx 0.0474349; \\
			\ga_{11}&=\gb\uu\set{(0, -1), T(0, -1), T^2(0, -1), T^3(0, -1), T^4(0, -1)}\\
			&\hspace{4cm} \te{ with } V_{11}=4(\frac{\sqrt{3}}{2}-\frac{31}{36})+\frac 1{72}\approx 0.0335461; \\
			\ga_{12}&=\gb\uu\set{(0, -1), T(0, -1), T^2(0, -1), T^3(0, -1), T^4(0, -1), T^5(0, -1)}\\
			&\hspace{4cm} \te{ with } V_{12}=5(\frac{\sqrt{3}}{2}-\frac{31}{36})\approx 0.0245715 .
		\end{align*} 
	\end{exam} 
	\begin{exam}  Let $P$ be a uniform distribution on a regular hexagon.  To obtain the constrained optimal set of $n$-points for $n=13$ (see Figure~\ref{Fig3}), we proceed as follows: Notice that $n=13=2\times 6+1$. Hence, by Lemma~\ref{lemmaMe112}, we can assume that 
		$\te{card}(\ga_{13}\ii S_1)=4, \te{card}(\ga_{13}\ii S_j)=3 \te{ for } 2\leq j\leq 6.$ Then, using Methodology~\ref{method101}, we obtain 
		$\ga_{13}\ii S_1$ and $\ga_{13}\ii S_j$ for $2\leq j\leq 6$ yielding 
		\begin{align*}
			\ga_{13}\ii S_1&=\set{(-\frac 12, -\frac{\sqrt 3}{2}), (-0.181083, -0.983468), (0.181083, -0.983468), (\frac 12, -\frac{\sqrt 3}{2})}, \\
			\ga_{13}\ii S_j&=T^j\Big(\set{(-\frac 12, -\frac{\sqrt 3}{2}), (0, 1), (\frac 12, -\frac{\sqrt 3}{2})}\Big) \te{ for } 2\leq j\leq 6. 
		\end{align*} 
		Hence, 
		\[\ga_{13}=\gb \uu \set{(-0.181083, -0.983468), (0.181083, -0.983468)}\uu \UU_{j=2}^6T^j\Big(\set{(-\frac 12, -\frac{\sqrt 3}{2}), (0, 1), (\frac 12, -\frac{\sqrt 3}{2})}\Big),\]
		and \[V_{13}=V(P; \ga_{13}\ii S_1)+\sum_{j=2}^6 V(P; \ga_{13}\ii S_j)= \frac{1}{72} \left(27\ 3^{2/3}-17-27 \sqrt[3]{3}\right)+5(\frac{\sqrt{3}}{2}-\frac{31}{36})\approx 0.11513.\]
	\end{exam} 
	
	\section{Conditional constrained quantization for the uniform distribution $P$ on a regular $k$-gon when the constraint is the incircle} \label{sec4}  
	
	In this section, for all $n\geq k$, we calculate the conditional constrained optimal sets $\ga_n$ of $n$-points and the corresponding $n$th conditional constrained quantization errors $V_n$ for the uniform distribution $P$ on a regular polygon with respect to the conditional set $\gb$ and the constraint $S$ which is the incircle of the polygon and is represented by the parametric equations as follows: 
	\[S:=\set{(r\cos\gq, r\sin \gq) : r=\cos \frac \pi k \te{ and } 0\leq \gq\leq 2\pi}.\] 
	Let $S_i$ be the circular arcs of the incircle lying over the sides $L_i$ subtending the same central angle of $\frac {2\pi} k$ for $1\leq i\leq k$ (see Figure~\ref{Fig4} when $k=6$). Let $\ga_n$ be an optimal set of $n$-points for $P$ for some $n\geq k$. Notice that if $n=k$, then $\ga_n=\gb$, where $\gb$ is the conditional set, i.e., $\ga_k$ does not contain any element from $S_i$ for any $1\leq i\leq k$. Hence, in the sequel we assume that $n\geq k+1$. 
	Let $\ga_n$ contain $n_i$ elements from $S_i\uu L_i$ including the two elements in the set $L_i\ii \gb$, i.e., $\te{card}(\ga_n\ii (S_i\uu L_i))=n_i$, where $n_i \geq 2$ for $1\leq i\leq k$. Notice that to obtain $\ga_n\ii (S_i\uu L_i)$ and the corresponding distortion error $V(P; \ga_n\ii (S_i\uu L_i))$ contributed by the elements in $\ga_n\ii(S_i\uu L_i)$ on $L_i$, where $\te{card}(\ga_n\ii (S_i\uu L_i))=n_i$, it is enough to obtain $\ga_n\ii (S_1\uu L_1)$ and the corresponding distortion error $V(P; \ga_n\ii (S_1\uu L_1))$ with $\te{card}(\ga_n\ii (S_1\uu L_1))=n_i$, and then to use the affine transformation $T$, as we know
	\[\ga_n\ii (S_i\uu L_i)=T^{i-1}(\ga_n\ii (S_1\uu L_1)), \te{ where } \te{card}(\ga_n\ii (S_1\uu L_1))=n_i, \te{ for } 2\leq i\leq k.\]
	Moreover, notice that $V(P; \ga_n\ii (S_i\uu L_i))=V(P; \ga_n\ii(S_1\uu L_1)), \te{ where } \te{card}(\ga_n\ii (S_1\uu L_1))=n_i$ and $V(P; \ga_n\ii (S_1\uu L_1))$ denotes the distortion error contributed by the $n_i$ elements on $S_1\uu L_1$. 
	
	Since $P$ is a uniform distribution and $S_i\uu L_i$ are rotationally symmetric, the following lemma, which is similar to Lemma~\ref{lemmaMe1}, is also true here.    
	\begin{lemma} \label{lemmaMe113} 
		Let $\ga_n$ be an optimal set of $n$-points with $\te{card}(\ga_n\ii (S_i\uu L_i))=n_i$ for $1\leq i\leq k$. Then, $|n_i-n_j|\in \set{0,1}$ for $1\leq i\neq j\leq k$. 
	\end{lemma}

	\begin{remark} \label{rem00}  
		Notice the dotted circles in Figure~\ref{Fig4}, when $k=6$, with centers $A_1(-\frac 12, -\frac{\sqrt 3}{2})$ and $A_2(\frac 12, -\frac{\sqrt 3}{2})$. Analogously, it is also true for any regular polygon with $k$ sides. Recall that $P$ is a uniform distribution, the support of which is a regular polygon with $k$ sides, and the elements in an optimal set $\ga_n$ of $n$-points are located in such a way so that the overall distortion error is minimum. Hence, from the geometry of Figure~\ref{Fig4}, without going into the details of calculations, we can assume that there cannot be any element in the optimal set $\ga_n$ lying on the lines $OA_1$, or $OA_2$, i.e., on any line joining the center $O$ of the circle and any of the vertices of the regular polygon, i.e., the Voronoi regions of any element in $\ga_n\ii S_i$ contains elements from $L_i$, and does not contain any element from $L_j$ for $i\neq j$. 
	\end{remark}

	\begin{method}\label{method11}
		To obtain the conditional constrained optimal sets $\ga_n$ of $n$-points and the $n$th conditional constrained quantization error for the uniform distribution $P$ on the regular polygon with $k$ sides with respect to the conditional set $\gb$ and the constraint $S$ for any $n\geq k+1$, assume that $\te{card}(\ga_n\ii S_1)=\ell$, i.e., 
		\[\ga_n\ii S_1=\set{(r\cos \gq_j, r\sin \gq_j) : 1\leq j\leq \ell}\]
		for $\ell\geq 1$ and $r=\cos \frac \pi k$. If $\ell=1$, then by Remark~\ref{rem00}, we can assume that $\ga_n\ii S_1$ contains the midpoint of the side $L_1$. We assume that $\ell\geq 2$. 
		Let the elements in $\ga_n\ii S_1$ be represented by the parameters $\gq_1<\gq_1<\gq_2<\dots<\gq_\ell$, where $\frac{3\pi}{2}-\frac{\pi}k<\gq_1<\cdots<\gq_\ell<\frac{3\pi}{2}+\frac{\pi}k$. 
		Let the boundary of the Voronoi regions of the element $A_1(\cos(\frac{3\pi}{2}-\frac{\pi}k), \sin(\frac{3\pi}{2}-\frac{\pi}k))$ and $\gq_1$ intersect the side $L_1$ at the element $(a_1, -\cos \frac \pi k )$,  the boundary of the Voronoi regions of the elements $\gq_j$ and $\gq_{j+1}$ intersect the side $L_1$ at the element $(a_{j+1}, -\cos \frac \pi k )$, where $1\leq j\leq \ell-1$, and the boundary of the Voronoi regions of the element $\gq_\ell$ and $A_2(\cos(\frac{3\pi}{2}+\frac{\pi}k), \sin(\frac{3\pi}{2}+\frac{\pi}k))$ intersect the side $L_1$ at the element $(a_{\ell+1}, -\cos \frac \pi k )$. 
		Then, the canonical equations are given by
		\begin{align*}
			&\rho((\cos(\frac{3\pi}{2}-\frac{\pi}k), \sin(\frac{3\pi}{2}-\frac{\pi}k)), (a_1, -\cos \frac \pi k ))-\rho((r\cos \gq_1, r\sin \gq_1), (a_1, -\cos \frac \pi k ))=0, \\
			&\rho((r\cos\gq_j, r\sin\gq_j), (a_{j+1}, -\cos \frac \pi k ))-\rho((r\cos\gq_{j+1}, r\sin\gq_{j+1}), (a_{j+1}, -\cos \frac \pi k ))=0 \te{ for } 1\leq j\leq \ell-1,  \te{ and }\\
			&\rho((\cos(\frac{3\pi}{2}+\frac{\pi}k), \sin(\frac{3\pi}{2}+\frac{\pi}k)), (a_{\ell+1}, -\cos \frac \pi k ))-\rho((r\cos \gq_\ell, r\sin \gq_\ell), (a_{\ell+1}, -\cos \frac \pi k ))=0,
		\end{align*}  
		where $r=\cos\frac{\pi}k$.
		Solving the canonical equations, we obtain the values of $a_1, a_2, \cdots, a_{\ell+1}$ in terms of $\gq_1, \gq_2, \cdots, \gq_\ell$ for $\ell\geq 2$. Hence, if $V(P; \ga_n\ii(S_1\uu L_1))$ is the distortion error contributed by the elements in $\ga_n\ii S_1$ on $L_1$, then we have 
		\begin{align}\label{Megha2} 
			&V(P; \ga_n\ii (S_1\uu L_1))\\
			&=\frac 1 k\Big(\int_{-\cos(\frac{3\pi}{2}-\frac{\pi}k)}^{a_1} \rho((x,-\cos\frac \pi k), (\cos(\frac{3\pi}{2}-\frac{\pi}k), \sin(\frac{3\pi}{2}-\frac{\pi}k))) \, dx \notag\\
			&+\sum_{j=1}^{k}\int_{a_j}^{a_{j+1}} \rho((x,-\cos\frac \pi k), (\cos\theta_{j} ,\sin \theta_{j})) \, dx\notag\\
			&\qquad +\int_{a_{k+1}}^{\frac 12} \rho((x,-\cos\frac \pi k), (\cos(\frac{3\pi}{2}+\frac{\pi}k), \sin(\frac{3\pi}{2}+\frac{\pi}k))) \, dx.
		\end{align} 
		Putting the values of $a_j$ we obtain the distortion error $V(P; \ga_n\ii (S_1\uu L_1))$, which is a function of $\gq_j$ for $1\leq j\leq \ell$. Now, using calculus, we obtain the values of $\gq_j$ for $1\leq j\leq \ell$ for which $V(P; \ga_n\ii (S_1\uu L_1))$ is minimum. Upon obtained the values of $\gq_j$, we can easily obtain
		$\ga_n\ii (S_1\uu L_1)$ and the corresponding distortion error $V(P; \ga_n\ii (S_1\uu L_1))$. Due to the affine transformation, we can also obtain $\ga_n\ii (S_i\uu L_i)$ and the corresponding distortion error $V(P; \ga_n\ii (S_i\uu L_i))$ for $2\leq i\leq k$. Thus, we have 
		\[\ga_n=\UU_{i=1}^k(\ga_n\ii (S_i\uu L_i)) \te{ with the $n$th constrained quantization error } V_n=\sum_{i=1}^k V(P; \ga_n\ii (S_i\uu L_i)).\]
	\end{method}
	
	\begin{figure}
		\vspace{-0.3 in} 
		\begin{tikzpicture}[line cap=round,line join=round,>=triangle 45,x=0.5 cm,y=0.5 cm]
			\clip(-6.8,-6.5) rectangle (6.5,6.5);
			\draw [line width=0.8 pt] (-2.,3.4641016151377544)-- (-4.,0.)-- (-2.,-3.4641016151377544)-- (2.,-3.4641016151377544)-- (4.,0.)-- (2.,3.4641016151377544);
			\draw [line width=0.8 pt] (-2.,3.4641016151377544)-- (2.,3.4641016151377544);
			\draw [line width=0.8 pt] (0.,0.) circle (3.4641);
			\draw [line width=0.2 pt,dash pattern=on 1pt off 1pt] (-2.,-3.4641016151377544) circle (0.5492747006848391);
			\draw [line width=0.2 pt,dash pattern=on 1pt off 1pt] (2.,-3.4641016151377544) circle (0.5268507093427136);
			\draw (-7.00,-3.10) node[anchor=north west] {$A_1 (-\frac 12, -\frac{\sqrt{3}}{2})$};
			\draw (2.10,-3.10) node[anchor=north west] {$A_2(\frac 12, -\frac{\sqrt{3}}{2})$};
			\draw (1.8,4.66) node[anchor=north west] {$(\frac 12, \frac{\sqrt{3}}{2})$};
			\draw (-5.4,4.66) node[anchor=north west] {$(-\frac 12, \frac{\sqrt{3}}{2})$};
			\draw (3.75,0.65) node[anchor=north west] {$(1, 0)$};
			\draw (-6.8,0.65) node[anchor=north west] {$(-1, 0)$};
			\draw (-0.591474726580256,-0.06591068017200247) node[anchor=north west] {$O$};
			\draw [line width=0.2 pt,dash pattern=on 1pt off 1pt] (0.,0.)-- (-2.,-3.4641016151377544);
			\draw [line width=0.2,dash pattern=on 1pt off 1pt] (0.,0.)-- (2.,-3.4641016151377544);
			\draw [line width=0.2,dash pattern=on 1pt off 1pt] (0.,0.)-- (2.,3.4641016151377544);
			\draw [line width=0.2,dash pattern=on 1pt off 1pt] (0.,0.)-- (-4.,0.);
			\draw [line width=0.2,dash pattern=on 1pt off 1pt] (0.,0.)-- (4.,0.);
			\draw [line width=0.2,dash pattern=on 1pt off 1pt] (0.,-0.02)-- (-2.,3.4441016151377544);
			\begin{scriptsize}
				\draw [fill=ffqqqq] (-2.,-3.4641016151377544) circle (1.0pt);
				\draw [fill=ffqqqq] (2.,-3.4641016151377544) circle (1.0pt);
				\draw [fill=ffqqqq] (4.,0.) circle (1.0pt);
				\draw [fill=ffqqqq] (2.,3.4641016151377544) circle (1.0pt);
				\draw [fill=ffqqqq] (-2.,3.4641016151377544) circle (1.0pt);
				\draw [fill=ffqqqq] (-4.,0.) circle (1.0pt);
				\draw [fill=ffqqqq] (0.,0.) circle (1.0pt);
			\end{scriptsize}
		\end{tikzpicture}
		\vspace{-0.5 in}
		\caption{The regular hexagon with a circle inscribed in it.} \label{Fig4}
	\end{figure}
	
	Using the above methodology, we obtain the results in the following examples. 
	
	\begin{exam}
		For a uniform distribution on a regular hexagon, i.e., when $k=6$,  
		let $\ga_n$ be a $n$th conditional constrained optimal set with the conditional constrained quantization error $V_n$. Then, we obtain the following results (see Figure~\ref{Fig5}): 
		\begin{align*}
			\ga_6&=\gb \te{ with } V_6=\frac 1 {12};\\
			\ga_7&=\gb\uu \set{(0, -\frac{\sqrt 3}2)} \te{ with } V_7=V(P; \ga_n\ii (S_1\uu L_1))+\sum_{j=2}^6 V(P; \ga_n\ii (S_j\uu L_j))=\frac{7}{96}\approx 0.0729;\\
			\ga_8&=\gb\uu \set{(0, -\frac{\sqrt 3}2), T(0, -\frac{\sqrt 3}2)} \te{ with } V_8=\frac{1}{16}\approx 0.0625;\\
			\ga_9&=\gb\uu \set{(0, -\frac{\sqrt 3}2), T(0, -\frac{\sqrt 3}2),  T^2(0, -\frac{\sqrt 3}2)} \te{ with } V_9=\frac{5}{96}\approx 0.0521;\\
			\ga_{10}&=\gb\uu \set{(0, -\frac{\sqrt 3}2), T(0, -\frac{\sqrt 3}2), T^2(0, -\frac{\sqrt 3}2), T^3(0, -\frac{\sqrt 3}2)} \te{ with } V_{10}=\frac{1}{24}\approx 0.0417;\\
			\ga_{11}&=\gb\uu \set{(0, -\frac{\sqrt 3}2), T(0, -\frac{\sqrt 3}2), T^2(0, -\frac{\sqrt 3}2), T^3(0, -\frac{\sqrt 3}2), T^4(0, -\frac{\sqrt 3}2)} \te{ with } V_{11}=\frac{1}{32}\approx 0.03125;\\
			\ga_{12}&=\gb\uu \set{(0, -\frac{\sqrt 3}2), T(0, -\frac{\sqrt 3}2), T^2(0, -\frac{\sqrt 3}2), T^3(0, -\frac{\sqrt 3}2), T^4(0, -\frac{\sqrt 3}2), T^5(0, -\frac{\sqrt 3}2)}\\
			&\hspace{3cm} \te{ with } V_{12}=\frac{1}{48}\approx0.0208. 
		\end{align*} 
	\end{exam}

	\begin{figure}
		\begin{tikzpicture}[line cap=round,line join=round,>=triangle 45,x=0.35 cm,y=0.35 cm]
			\clip(-4.472797830477084,-4.626156858833838) rectangle (4.25308689949056,4.274903060882107);
			\draw [line width=0.8 pt] (-2.,3.4641016151377544)-- (-4.,0.)-- (-2.,-3.4641016151377544)-- (2.,-3.4641016151377544)-- (4.,0.)-- (2.,3.4641016151377544);
			\draw [line width=0.8 pt] (-2.,3.4641016151377544)-- (2.,3.4641016151377544);
			\draw [line width=0.8 pt] (0.,0.) circle (3.4641);
			\draw [line width=0.2 pt,dash pattern=on 1pt off 1pt] (0.,0.)-- (-2.,-3.4641016151377544);
			\draw [line width=0.2,dash pattern=on 1pt off 1pt] (0.,0.)-- (2.,-3.4641016151377544);
			\draw [line width=0.2,dash pattern=on 1pt off 1pt] (0.,0.)-- (2.,3.4641016151377544);
			\draw [line width=0.2,dash pattern=on 1pt off 1pt] (0.,0.)-- (-4.,0.);
			\draw [line width=0.2,dash pattern=on 1pt off 1pt] (0.,0.)-- (4.,0.);
			\draw [line width=0.2,dash pattern=on 1pt off 1pt] (0.,-0.02)-- (-2.,3.4441016151377544);
			\begin{scriptsize}
				\draw [fill=ffqqqq] (-2.,-3.4641016151377544) circle (1.5pt);
				\draw [fill=ffqqqq] (2.,-3.4641016151377544) circle (1.5pt);
				\draw [fill=ffqqqq] (4.,0.) circle (1.5pt);
				\draw [fill=ffqqqq] (2.,3.4641016151377544) circle (1.5pt);
				\draw [fill=ffqqqq] (-2.,3.4641016151377544) circle (1.5pt);
				\draw [fill=ffqqqq] (-4.,0.) circle (1.5pt);
			\end{scriptsize}
		\end{tikzpicture}
		\begin{tikzpicture}[line cap=round,line join=round,>=triangle 45,x=0.35 cm,y=0.35 cm]
			\clip(-4.472797830477084,-4.626156858833838) rectangle (4.25308689949056,4.274903060882107);
			\draw [line width=0.8 pt] (-2.,3.4641016151377544)-- (-4.,0.)-- (-2.,-3.4641016151377544)-- (2.,-3.4641016151377544)-- (4.,0.)-- (2.,3.4641016151377544);
			\draw [line width=0.8 pt] (-2.,3.4641016151377544)-- (2.,3.4641016151377544);
			\draw [line width=0.8 pt] (0.,0.) circle (3.4641);
			\draw [line width=0.2 pt,dash pattern=on 1pt off 1pt] (0.,0.)-- (-2.,-3.4641016151377544);
			\draw [line width=0.2,dash pattern=on 1pt off 1pt] (0.,0.)-- (2.,-3.4641016151377544);
			\draw [line width=0.2,dash pattern=on 1pt off 1pt] (0.,0.)-- (2.,3.4641016151377544);
			\draw [line width=0.2,dash pattern=on 1pt off 1pt] (0.,0.)-- (-4.,0.);
			\draw [line width=0.2,dash pattern=on 1pt off 1pt] (0.,0.)-- (4.,0.);
			\draw [line width=0.2,dash pattern=on 1pt off 1pt] (0.,-0.02)-- (-2.,3.4441016151377544);
			\begin{scriptsize}
				\draw [fill=ffqqqq] (-2.,-3.4641016151377544) circle (1.5pt);
				\draw [fill=ffqqqq] (2.,-3.4641016151377544) circle (1.5pt);
				\draw [fill=ffqqqq] (4.,0.) circle (1.5pt);
				\draw [fill=ffqqqq] (2.,3.4641016151377544) circle (1.5pt);
				\draw [fill=ffqqqq] (-2.,3.4641016151377544) circle (1.5pt);
				\draw [fill=ffqqqq] (-4.,0.) circle (1.5pt);
				\draw [fill=ffqqqq] (0.,-3.4641) circle (1.5pt);
			\end{scriptsize}
		\end{tikzpicture}
		\begin{tikzpicture}[line cap=round,line join=round,>=triangle 45,x=0.35 cm,y=0.35 cm]
			\clip(-4.472797830477084,-4.626156858833838) rectangle (4.25308689949056,4.274903060882107);
			\draw [line width=0.8 pt] (-2.,3.4641016151377544)-- (-4.,0.)-- (-2.,-3.4641016151377544)-- (2.,-3.4641016151377544)-- (4.,0.)-- (2.,3.4641016151377544);
			\draw [line width=0.8 pt] (-2.,3.4641016151377544)-- (2.,3.4641016151377544);
			\draw [line width=0.8 pt] (0.,0.) circle (3.4641);
			\draw [line width=0.2 pt,dash pattern=on 1pt off 1pt] (0.,0.)-- (-2.,-3.4641016151377544);
			\draw [line width=0.2,dash pattern=on 1pt off 1pt] (0.,0.)-- (2.,-3.4641016151377544);
			\draw [line width=0.2,dash pattern=on 1pt off 1pt] (0.,0.)-- (2.,3.4641016151377544);
			\draw [line width=0.2,dash pattern=on 1pt off 1pt] (0.,0.)-- (-4.,0.);
			\draw [line width=0.2,dash pattern=on 1pt off 1pt] (0.,0.)-- (4.,0.);
			\draw [line width=0.2,dash pattern=on 1pt off 1pt] (0.,-0.02)-- (-2.,3.4441016151377544);
			\begin{scriptsize}
				\draw [fill=ffqqqq] (-2.,-3.4641016151377544) circle (1.5pt);
				\draw [fill=ffqqqq] (2.,-3.4641016151377544) circle (1.5pt);
				\draw [fill=ffqqqq] (4.,0.) circle (1.5pt);
				\draw [fill=ffqqqq] (2.,3.4641016151377544) circle (1.5pt);
				\draw [fill=ffqqqq] (-2.,3.4641016151377544) circle (1.5pt);
				\draw [fill=ffqqqq] (-4.,0.) circle (1.5pt);
				\draw [fill=ffqqqq] (0.,-3.4641) circle (1.5pt);
				\draw [fill=ffqqqq] (3., -1.73205) circle (1.5pt);
			\end{scriptsize}
		\end{tikzpicture}
		\begin{tikzpicture}[line cap=round,line join=round,>=triangle 45,x=0.35 cm,y=0.35 cm]
			\clip(-4.472797830477084,-4.626156858833838) rectangle (4.25308689949056,4.274903060882107);
			\draw [line width=0.8 pt] (-2.,3.4641016151377544)-- (-4.,0.)-- (-2.,-3.4641016151377544)-- (2.,-3.4641016151377544)-- (4.,0.)-- (2.,3.4641016151377544);
			\draw [line width=0.8 pt] (-2.,3.4641016151377544)-- (2.,3.4641016151377544);
			\draw [line width=0.8 pt] (0.,0.) circle (3.4641);
			\draw [line width=0.2 pt,dash pattern=on 1pt off 1pt] (0.,0.)-- (-2.,-3.4641016151377544);
			\draw [line width=0.2,dash pattern=on 1pt off 1pt] (0.,0.)-- (2.,-3.4641016151377544);
			\draw [line width=0.2,dash pattern=on 1pt off 1pt] (0.,0.)-- (2.,3.4641016151377544);
			\draw [line width=0.2,dash pattern=on 1pt off 1pt] (0.,0.)-- (-4.,0.);
			\draw [line width=0.2,dash pattern=on 1pt off 1pt] (0.,0.)-- (4.,0.);
			\draw [line width=0.2,dash pattern=on 1pt off 1pt] (0.,-0.02)-- (-2.,3.4441016151377544);
			\begin{scriptsize}
				\draw [fill=ffqqqq] (-2.,-3.4641016151377544) circle (1.5pt);
				\draw [fill=ffqqqq] (2.,-3.4641016151377544) circle (1.5pt);
				\draw [fill=ffqqqq] (4.,0.) circle (1.5pt);
				\draw [fill=ffqqqq] (2.,3.4641016151377544) circle (1.5pt);
				\draw [fill=ffqqqq] (-2.,3.4641016151377544) circle (1.5pt);
				\draw [fill=ffqqqq] (-4.,0.) circle (1.5pt);
				\draw [fill=ffqqqq] (0.,-3.4641) circle (1.5pt);
				\draw [fill=ffqqqq] (3., -1.73205) circle (1.5pt);
				\draw [fill=ffqqqq] (3., 1.73205) circle (1.5pt);
			\end{scriptsize}
		\end{tikzpicture}\\
		\begin{tikzpicture}[line cap=round,line join=round,>=triangle 45,x=0.35 cm,y=0.35 cm]
			\clip(-4.472797830477084,-4.626156858833838) rectangle (4.25308689949056,4.274903060882107);
			\draw [line width=0.8 pt] (-2.,3.4641016151377544)-- (-4.,0.)-- (-2.,-3.4641016151377544)-- (2.,-3.4641016151377544)-- (4.,0.)-- (2.,3.4641016151377544);
			\draw [line width=0.8 pt] (-2.,3.4641016151377544)-- (2.,3.4641016151377544);
			\draw [line width=0.8 pt] (0.,0.) circle (3.4641);
			\draw [line width=0.2 pt,dash pattern=on 1pt off 1pt] (0.,0.)-- (-2.,-3.4641016151377544);
			\draw [line width=0.2,dash pattern=on 1pt off 1pt] (0.,0.)-- (2.,-3.4641016151377544);
			\draw [line width=0.2,dash pattern=on 1pt off 1pt] (0.,0.)-- (2.,3.4641016151377544);
			\draw [line width=0.2,dash pattern=on 1pt off 1pt] (0.,0.)-- (-4.,0.);
			\draw [line width=0.2,dash pattern=on 1pt off 1pt] (0.,0.)-- (4.,0.);
			\draw [line width=0.2,dash pattern=on 1pt off 1pt] (0.,-0.02)-- (-2.,3.4441016151377544);
			\begin{scriptsize}
				\draw [fill=ffqqqq] (-2.,-3.4641016151377544) circle (1.5pt);
				\draw [fill=ffqqqq] (2.,-3.4641016151377544) circle (1.5pt);
				\draw [fill=ffqqqq] (4.,0.) circle (1.5pt);
				\draw [fill=ffqqqq] (2.,3.4641016151377544) circle (1.5pt);
				\draw [fill=ffqqqq] (-2.,3.4641016151377544) circle (1.5pt);
				\draw [fill=ffqqqq] (-4.,0.) circle (1.5pt);
				\draw [fill=ffqqqq] (0.,-3.4641) circle (1.5pt);
				\draw [fill=ffqqqq] (3., -1.73205) circle (1.5pt);
				\draw [fill=ffqqqq] (3., 1.73205) circle (1.5pt);
				\draw [fill=ffqqqq] (0.,3.4641) circle (1.5pt);
			\end{scriptsize}
		\end{tikzpicture}
		\begin{tikzpicture}[line cap=round,line join=round,>=triangle 45,x=0.35 cm,y=0.35 cm]
			\clip(-4.472797830477084,-4.626156858833838) rectangle (4.25308689949056,4.274903060882107);
			\draw [line width=0.8 pt] (-2.,3.4641016151377544)-- (-4.,0.)-- (-2.,-3.4641016151377544)-- (2.,-3.4641016151377544)-- (4.,0.)-- (2.,3.4641016151377544);
			\draw [line width=0.8 pt] (-2.,3.4641016151377544)-- (2.,3.4641016151377544);
			\draw [line width=0.8 pt] (0.,0.) circle (3.4641);
			\draw [line width=0.2 pt,dash pattern=on 1pt off 1pt] (0.,0.)-- (-2.,-3.4641016151377544);
			\draw [line width=0.2,dash pattern=on 1pt off 1pt] (0.,0.)-- (2.,-3.4641016151377544);
			\draw [line width=0.2,dash pattern=on 1pt off 1pt] (0.,0.)-- (2.,3.4641016151377544);
			\draw [line width=0.2,dash pattern=on 1pt off 1pt] (0.,0.)-- (-4.,0.);
			\draw [line width=0.2,dash pattern=on 1pt off 1pt] (0.,0.)-- (4.,0.);
			\draw [line width=0.2,dash pattern=on 1pt off 1pt] (0.,-0.02)-- (-2.,3.4441016151377544);
			\begin{scriptsize}
				\draw [fill=ffqqqq] (-2.,-3.4641016151377544) circle (1.5pt);
				\draw [fill=ffqqqq] (2.,-3.4641016151377544) circle (1.5pt);
				\draw [fill=ffqqqq] (4.,0.) circle (1.5pt);
				\draw [fill=ffqqqq] (2.,3.4641016151377544) circle (1.5pt);
				\draw [fill=ffqqqq] (-2.,3.4641016151377544) circle (1.5pt);
				\draw [fill=ffqqqq] (-4.,0.) circle (1.5pt);
				\draw [fill=ffqqqq] (0.,-3.4641) circle (1.5pt);
				\draw [fill=ffqqqq] (3., -1.73205) circle (1.5pt);
				\draw [fill=ffqqqq] (3., 1.73205) circle (1.5pt);
				\draw [fill=ffqqqq] (0.,3.4641) circle (1.5pt);
				\draw [fill=ffqqqq] (-3., 1.73205) circle (1.5pt);
			\end{scriptsize}
		\end{tikzpicture}
		\begin{tikzpicture}[line cap=round,line join=round,>=triangle 45,x=0.35 cm,y=0.35 cm]
			\clip(-4.472797830477084,-4.626156858833838) rectangle (4.25308689949056,4.274903060882107);
			\draw [line width=0.8 pt] (-2.,3.4641016151377544)-- (-4.,0.)-- (-2.,-3.4641016151377544)-- (2.,-3.4641016151377544)-- (4.,0.)-- (2.,3.4641016151377544);
			\draw [line width=0.8 pt] (-2.,3.4641016151377544)-- (2.,3.4641016151377544);
			\draw [line width=0.8 pt] (0.,0.) circle (3.4641);
			\draw [line width=0.2 pt,dash pattern=on 1pt off 1pt] (0.,0.)-- (-2.,-3.4641016151377544);
			\draw [line width=0.2,dash pattern=on 1pt off 1pt] (0.,0.)-- (2.,-3.4641016151377544);
			\draw [line width=0.2,dash pattern=on 1pt off 1pt] (0.,0.)-- (2.,3.4641016151377544);
			\draw [line width=0.2,dash pattern=on 1pt off 1pt] (0.,0.)-- (-4.,0.);
			\draw [line width=0.2,dash pattern=on 1pt off 1pt] (0.,0.)-- (4.,0.);
			\draw [line width=0.2,dash pattern=on 1pt off 1pt] (0.,-0.02)-- (-2.,3.4441016151377544);
			\begin{scriptsize}
				\draw [fill=ffqqqq] (-2.,-3.4641016151377544) circle (1.5pt);
				\draw [fill=ffqqqq] (2.,-3.4641016151377544) circle (1.5pt);
				\draw [fill=ffqqqq] (4.,0.) circle (1.5pt);
				\draw [fill=ffqqqq] (2.,3.4641016151377544) circle (1.5pt);
				\draw [fill=ffqqqq] (-2.,3.4641016151377544) circle (1.5pt);
				\draw [fill=ffqqqq] (-4.,0.) circle (1.5pt);
				\draw [fill=ffqqqq] (0.,-3.4641) circle (1.5pt);
				\draw [fill=ffqqqq] (3., -1.73205) circle (1.5pt);
				\draw [fill=ffqqqq] (3., 1.73205) circle (1.5pt);
				\draw [fill=ffqqqq] (0.,3.4641) circle (1.5pt);
				\draw [fill=ffqqqq] (-3., 1.73205) circle (1.5pt);
				\draw [fill=ffqqqq] (-3., -1.73205) circle (1.5pt);
			\end{scriptsize}
		\end{tikzpicture}
		\begin{tikzpicture}[line cap=round,line join=round,>=triangle 45,x=0.35 cm,y=0.35 cm]
			\clip(-4.472797830477084,-4.626156858833838) rectangle (4.25308689949056,4.274903060882107);
			\draw [line width=0.8 pt] (-2.,3.4641016151377544)-- (-4.,0.)-- (-2.,-3.4641016151377544)-- (2.,-3.4641016151377544)-- (4.,0.)-- (2.,3.4641016151377544);
			\draw [line width=0.8 pt] (-2.,3.4641016151377544)-- (2.,3.4641016151377544);
			\draw [line width=0.8 pt] (0.,0.) circle (3.4641);
			\draw [line width=0.2 pt,dash pattern=on 1pt off 1pt] (0.,0.)-- (-2.,-3.4641016151377544);
			\draw [line width=0.2,dash pattern=on 1pt off 1pt] (0.,0.)-- (2.,-3.4641016151377544);
			\draw [line width=0.2,dash pattern=on 1pt off 1pt] (0.,0.)-- (2.,3.4641016151377544);
			\draw [line width=0.2,dash pattern=on 1pt off 1pt] (0.,0.)-- (-4.,0.);
			\draw [line width=0.2,dash pattern=on 1pt off 1pt] (0.,0.)-- (4.,0.);
			\draw [line width=0.2,dash pattern=on 1pt off 1pt] (0.,-0.02)-- (-2.,3.4441016151377544);
			\begin{scriptsize}
				\draw [fill=ffqqqq] (-2.,-3.4641016151377544) circle (1.5pt);
				\draw [fill=ffqqqq] (2.,-3.4641016151377544) circle (1.5pt);
				\draw [fill=ffqqqq] (4.,0.) circle (1.5pt);
				\draw [fill=ffqqqq] (2.,3.4641016151377544) circle (1.5pt);
				\draw [fill=ffqqqq] (-2.,3.4641016151377544) circle (1.5pt);
				\draw [fill=ffqqqq] (-4.,0.) circle (1.5pt);
				\draw [fill=ffqqqq] (3., -1.73205) circle (1.5pt);
				\draw [fill=ffqqqq] (3., 1.73205) circle (1.5pt);
				\draw [fill=ffqqqq] (0.,3.4641) circle (1.5pt);
				\draw [fill=ffqqqq] (-3., 1.73205) circle (1.5pt);
				\draw [fill=ffqqqq] (-3., -1.73205) circle (1.5pt);
				\draw [fill=ffqqqq] (-0.650055, -3.40256) circle (1.5pt);
				\draw [fill=ffqqqq] (0.650055, -3.40256) circle (1.5pt);
			\end{scriptsize}
		\end{tikzpicture}
		\caption{In a regular hexagon, the conditional constrained quantization with the constraint as the incircle, the dots represent the elements in an optimal set of $n$-points for $6\leq n\leq 13$.} \label{Fig5}
	\end{figure}

	\begin{exam} \label{exam111}  
		For a uniform distribution on a regular hexagon, i.e., when $k=6$, to obtain the optimal set $\ga_{13}$ and the corresponding conditional constrained quantization error $V_{13}$,  we proceed as follows (see Figure~\ref{Fig5}): By Lemma~\ref{lemmaMe113}, we can assume that $\te{card}(\ga_{13}\ii (S_1\uu L_1))=4$ and $\te{card}(\ga_n\ii (S_i\uu L_i))=3$ for $2\leq i\leq 6$. Then, using Methodology~\ref{method11}, we obtain 
		\[\ga_{13}\ii (S_1\uu L_1)=\set{(-\frac 12, -\frac{\sqrt 3}{2}), (-0.162514, -0.85064), (0.162514, -0.85064), (\frac 12, -\frac{\sqrt 3}{2})}\]  and  $V(P; \ga_{13}\ii(S_1\uu L_1))=0.00157076.$
		Moreover, we have 
		\[\ga_{13}\ii (S_i\uu L_i)=T^i(\set{(-\frac 12, -\frac{\sqrt 3}{2}), (0, -\frac{\sqrt 3}2),(\frac 12, -\frac{\sqrt 3}{2})})\]  with $V(\ga_{13}\ii (S_i\uu L_i))=\frac 1{288}  \te{ for } 2\leq j\leq 6.$
		Hence, 
		\begin{align*} \ga_{13}&=\UU_{i=1}^6\ga_{13}\ii (S_i\uu L_i)=\gb\UU\set{(-0.162514, -0.85064), (0.162514, -0.85064)}\UU\UU_{i=2}^6 \set{T^i(0, -\frac {\sqrt 3}{2})} \te{ with } \\
			V_{13}&=V(P; \ga_{13}\ii (S_1\uu L_1))+\sum_{i=2}^6 V(P; \ga_{13}\ii (S_i\uu L_i))=0.00157076+\frac 5{288}=0.0189319. 
		\end{align*} 
	\end{exam}

	\section{Conditional constrained quantization for a uniform distribution $P$ on a regular polygon with $k$ sides when the constraint is a diagonal of the polygon} \label{sec5}  
	For definiteness sake in this section, we take $k=6$, i.e., we take $P$ as a uniform distribution on a regular hexagon.   
	Notice that the hexagon has nine diagonals: six of the diagonals are of equal lengths, and we call them the diagonals of the smaller length; the remaining three diagonals are of equal lengths, and we call
	them the diagonals of the larger length. Let $diag$ represent any of the nine diagonals of the hexagon. By $\te{Int}(diag)$, it is meant the interior of the diagonal $diag$. $\gb$ is the conditional set. Then, notice that $\gb\ii diag\neq \es$ with $\te{card}(\gb\ii diag)=2$, and $\gb\ii \te{Int}(diag)=\es$. Hence, we can assume that in a conditional constrained optimal set $\ga_n$ of $n$-points with the constraint a diagonal $diag$, the set $\ga_n\setminus \gb$ will contain $n-6$ elements from the $\te{Int}(diag)$, i.e., $\te{card}(\ga_n\ii \te{Int}(diag))=n-6$.
	
	\begin{defi} \label{defiMe}
		Let $\ga$ be a discrete set. Then, for a Borel probability measure $\mu$ and a set $A$, by $V(\mu; \ga, A)$, it is meant the distortion error for $\mu$ with respect to the set $\ga$ over the set $A$, i.e., 
	\begin{equation}
		V(\mu; \ga, A):= \sum_{a\in \ga} \int_{A\ii M(a|\ga)} \mathop{\min}\limits_{a\in\ga} \rho(x, a) \,d\mu(x).
	\end{equation}
\end{defi}
In the following two subsections, we give the main results of this section.
\subsection{When the constraint is a diagonal of the smaller length} \label{sub2}
As can be seen from Figure~\ref{Fig1}, there
are six diagonals of the smaller length. Without any loss of generality, we can choose the diagonal $A_6A_4$
of the smaller length as the constraint. The conditional constrained optimal set of six-points is the conditional set $\gb$.

\begin{lemma}\label{Me421}
	Let $\ga_n$ be a conditional constrained optimal set of $n$-points for any $n\geq 7$. Then, $\ga_n\ii \te{Int}(A_6A_4)$ does not contain any element from $L_1\uu L_2\uu L_3\uu L_6$. 
\end{lemma} 
\begin{proof}
	Let $\ga_n$ be a conditional constrained optimal set of $n$-points for $n \geq 7$. Then, $\te{card}(\ga_n\ii \te{Int}(A_6A_4))\geq 1$. Since  $L_3$ is perpendicular to $A_6A_4$, for any $(x, y) \in L_3$, we have
	\[\min_{a\in \ga_n} \rho((x, y), a)= \rho((x, y), (\frac 12, \frac {\sqrt 3}2)),\]
	i.e., the Voronoi region of any element in $\ga_n\ii \te{Int}(A_6A_4)$ does not contain any element from $L_3$. Due to symmetry,
	the Voronoi region of any element in $\ga_n\ii \te{Int}(A_6A_4)$ also does not contain any element from $L_6$.  Again, notice that for any $(x, y)\in L_1\uu L_2$, we have 
	\[\min_{a\in \ga_n\ii A_6A_4} \rho((x, y), a)>\min_{a\in \gb} \rho((x, y), a),\]
	i.e., the Voronoi region of any element in $\ga_n\ii \te{Int}(A_6A_4)$ does not contain any element from $L_1\uu L_2$. 
	Thus, the proof of the lemma is complete. 
\end{proof} 

\begin{lemma} \label{Me422}
	For any $n\geq 6$, a conditional constrained optimal set of $n$-points contains exactly $n$-elements. 
\end{lemma} 
\begin{proof}
	$\gb$ is a conditional constrained optimal set of six-points that contains exactly six elements. Let $n \geq 7$. For the sake of contradiction, assume that there is an optimal set $\ga_n$ that contains fewer than $n$ elements. 
	The equation of the diagonal $A_6A_4$ is $y=\frac 1 {\sqrt 3} (x+1)$. The side $L_5$ is
	given by
	\[L_5=\{(\frac{1}{2} (-t-1),-\frac{1}{2} \sqrt{3} (t-1)): 0\leq t\leq 1\}.\]
	Let us consider an element $(\frac{1}{2} (-t-1),-\frac{1}{2} \sqrt{3} (t-1))$ on $L_5$. Its distance from the line  $y=\frac 1 {\sqrt 3} (x+1)$ is given by $d_1:=\frac{ 1-t} 2$. On the other hand, the distance of the element $(\frac{1}{2} (-t-1),-\frac{1}{2} \sqrt{3} (t-1))$ from $(-1, 0)$ is $d_2:=1-t$, and from $(-\frac 12, \frac{\sqrt 3}{2})$ is $d_3:=t$. Notice that always $d_1<d_2$ for $0< t\leq 1$. On the other hand, $d_1< d_3$ for $\frac 13<t\leq 1$. Hence, the possible elements in $L_5$ that can fall within the Voronoi regions of the elements in $\ga_n\ii \te{Int}(A_6A_4)$ are for $\frac 13< t<1$. Similarly, the possible elements in $L_4$ that can fall within the Voronoi regions of the elements in $\ga_n\ii \te{Int}(A_6A_4)$ are for $0< t< \frac 23$. Notice that $t=\frac 13$ corresponds to the point $(-\frac{2}{3},\frac{1}{\sqrt{3}})$, and $t=1$ corresponds to the point $(-1, 0)$ on the side $L_5$. If $(u, v)$ be the foot of the perpendicular from the point $(-\frac{2}{3},\frac{1}{\sqrt{3}})$ on the diagonal $A_6A_4$, then we have
	\[(u, v)=(-\frac 12, \frac{1}{2 \sqrt{3}}).\]
	Thus, we can conclude that the elements of $\ga_n\ii \te{Int}(A_6A_4)$, the Voronoi regions of which cover the elements from $L_5$ will lie on the line $y=\frac 1 {\sqrt 3} (x+1)$ for $-1<x<-\frac 12$. Similarly, we can conclude that the elements of $\ga_n\ii \te{Int}(A_6A_4)$, the Voronoi regions of which cover the elements from $L_4$ will lie on the line $y=\frac 1 {\sqrt 3} (x+1)$ for $0<x<\frac{1}{2}$. Hence, if the optimal set $\ga_n$ contains fewer than $n$ elements, then by adding at least one element in $\ga_n$ which comes from $y=\frac 1 {\sqrt 3} (x+1)$ either for $-1<x<-\frac 12$, or for $0<x<\frac{1}{2}$, we can strictly reduce the quantization error, which contradicts the fact that $\ga_n$ is a conditional constrained optimal set of $n$-points. Thus, we conclude that a conditional constrained optimal set of $n$-points contains exactly $n$-elements, which completes the proof of the lemma. 
\end{proof} 
\begin{remark}
	Recall that $\ga_n$ is a conditional constrained optimal set of $n$-points for $n\geq 6$. Then, 
	\[\ga_n\ii A_6A_4=(\ga_n\ii \te{Int}(A_6A_4))\uu \set{(-1, 0), (\frac 12, \frac{\sqrt 3}{2})},\]
	As mentioned in the proof of Lemma~\ref{Me422}, we have 
	\begin{equation} \label{eq1111} \ga_n\ii A_6A_4 \ci \set{(x, y) \in A_6A_4 : -1\leq x\leq -\frac 12}\uu\set {(x, y)\in A_6A_4 : 0\leq x\leq \frac 12}.
	\end{equation} 
\end{remark} 
\begin{remark} 
	By the expression \eqref{eq1111}, we can conclude that for any $a\in \ga_n\ii A_6A_4$, if the Voronoi region of the element $a$ includes elements from $L_5$, then it will not include any element from $L_4$. We denote such elements in $\ga_n\ii A_6A_4$ by $(\ga_n\ii A_6A_4)^{(1)}$. Similarly, for any $a\in \ga_n\ii A_6A_4$, if the Voronoi region of the element $a$ includes elements from $L_4$, then it will not include any element from $L_5$. We denote such elements in $\ga_n\ii A_6A_4$ by $(\ga_n\ii A_6A_4)^{(2)}$.
	Notice that 
	\[\ga_n\ii A_6A_4=(\ga_n\ii A_6A_4)^{(1)}\UU (\ga_n\ii A_6A_4)^{(2)}, \te{ and } (\ga_n\ii A_6A_4)^{(1)}\II (\ga_n\ii A_6A_4)^{(2)}=\es.  \]
\end{remark}
Let us now state the following lemma. The proof is similar to Lemma~\ref{lemmaMe1}, in fact, $P$ being a uniform distribution, and the two sides $L_4$ and $L_5$ are symmetrically located with respect to the perpendicular bisector of the segment $A_6A_4$, it is not difficult to show the proof. 

\begin{lemma} \label{Megha1} 
	For $n\geq 7$, let $(\ga_n\ii A_6A_4)^{(1)}$ and $(\ga_n\ii A_6A_4)^{(2)}$ be two sets as defined before. Let $\te{card}((\ga_n\ii A_6A_4)^{(1)}))=n_1$ and $\te{card}((\ga_n\ii A_6A_4)^{(2)}))=n_2$. Then, $n_1, n_2\geq 1 $ with $|n_1-n_2|\in \set{0,1}$. 
\end{lemma} 
\subsection*{Isometry} Consider the isometry $U: \D R^2 \to \D R^2$ such that 
\[U(x, y)=(\frac{\sqrt{3}} 2x+\frac{y}{2}, -\frac{x}{2}+\frac{\sqrt{3}} 2y-\frac 12).\] Since $U$ is a bijective function, its inverse transformation is given by 
\[U^{-1}(x, y) : \D R^2 \to \D R^2 \te{ such that } U^{-1}(x, y)=(\frac{\sqrt{3}} 2 x-\frac{y}{2}-\frac 14, \frac x2  +\frac{\sqrt{3}} 2 y+ \frac {\sqrt 3}4). \] 
Let us now consider a triangle $\tri GHI$ with vertices $G(-\frac{\sqrt{3}}{2},0)$, $H(\frac{\sqrt{3}}{2},0)$, and $I(0, \frac 12)$. 
Notice that $U(-1, 0)=(-\frac{\sqrt{3}}{2},0), \, U(\frac{1}{2},\frac{\sqrt{3}}{2})=(\frac{\sqrt{3}}{2},0)$, and $U(-\frac 12, \frac {\sqrt 3}2)=(0,\frac{1}{2})$. Hence, we have 
\begin{equation} \label{eq54} U(\tri A_6A_4A_5)=\tri GHI \te{ and } U^{-1}(\tri GHI)=\tri A_6A_4A_5.\end{equation}

\begin{note} \label{note1} 
	To calculate the conditional constrained optimal sets of $n$-points for any $n\geq 7$, for the uniform distribution $P$, we will compare $\tri A_6A_4A_5$ with $\tri GHI$ as given by \eqref{eq54}.   Consequently,  $(\ga_n\ii A_6A_4 )^{(1)}$ and $(\ga_n\ii A_6A_4 )^{(2)}$ will be compared, respectively, with  $(U(\ga_n)\ii GH)^{(1)}$ and $(U(\ga_n)\ii GH)^{(2)}$. Thus, by Lemma~\ref{Megha1}, we see that if $\te{card}((U(\ga_n)\ii GH)^{(1)})=n_1$ and $\te{card}((U(\ga_n)\ii GH)^{(2)})=n_2$, then $n_1, n_2\geq 1 $ with $|n_1-n_2|\in \set{0,1}$. 
	Moreover, notice that after the isometry, the expression given by \eqref{eq1111}, reduces to 
	\begin{equation} \label{eq1112} U(\ga_n)\ii GH \ci \set{(x, y) \in GH : -\frac{\sqrt{3}}{2}\leq x\leq -\frac{1}{2 \sqrt{3}}}\uu\set {(x, y)\in GH : \frac{1}{2 \sqrt{3}}\leq x\leq \frac{\sqrt{3}}{2}}.
	\end{equation}
	From the expression~\eqref{eq1112}, we see that the Voronoi region of any element in $\set{(x, y) \in GH : -\frac{\sqrt{3}}{2}\leq x\leq -\frac{1}{2 \sqrt{3}}}$ does not contain any element from the Voronoi region of any element in  
	$\set {(x, y)\in GH : \frac{1}{2 \sqrt{3}}\leq x\leq \frac{\sqrt{3}}{2}}$, and vice versa. 
	Let $Q$ be the normalized image measure of $P$ under the isometry $U$, i.e., $Q=3 U(P)$ such that for any Borel subset $A\ci \D R^2$, we have $Q(A)=3 P(U^{-1}(A))$. Thus, $Q$ can be considered as a Borel probability measure on $\D R^2$ which is uniform on its support $GI\uu HI$. The parametric representations of the sides $GI$, $HI$ and $GH$ are given as follows:
	\begin{align*} GI&=\set{(-\frac{\sqrt{3}t }{2},\frac{1-t}{2}) : 0\leq t\leq 1},\\
		HI&=\set{(\frac{\sqrt{3} (1-t)}{2},\frac{t}{2}) : 0\leq t\leq 1}, \te{ and }\\
		GH&=\set{(t, 0) : -\frac{\sqrt 3} 2\leq t\leq \frac{\sqrt 3} 2}.
	\end{align*} 
	Let $s$ represent the distance of any point on $\tri GHI$ starting from the vertex $G$ tracing along the boundary of the triangle in the counterclockwise direction. Then, the points $G, H, I$ are, respectively, represented by $s=0, s=1, s=2$. 
	The probability density function (pdf) $g$ of the uniform distribution $Q$ is given by $g(s):=g(x, y)=\frac 1{2}$ for all $(x, y)\in GI\uu HI$, and zero otherwise. 
	Again, $dQ(s)=Q(ds)=g(x_1, x_2) ds$. Notice that on each of $GI$ and $HI$, we have $ds=\sqrt{(dx)^2+(dy)^2}=dt$.
	
	For $n\geq 6$, to obtain a conditional constrained optimal set $\ga_n$ of $n$-points for the uniform distribution $P$ under the conditional set $\gb$ and the constraint $A_6A_4 $, we will first calculate a conditional constrained optimal set $\gg_{n-3}$ of $(n-3)$-points for the uniform distribution $Q$ under the conditional set $\hat\gb:=\set{(-\frac {\sqrt 3}2, 0), (\frac {\sqrt 3}2, 0), (0,\frac 12)}$ and the constraint $GH$, and then will obtain them for the uniform distribution $P$ as described in Methodology~\ref{method1} given below.
\end{note}

Denote the sets $\set{(x, y) \in GH : -\frac{\sqrt{3}}{2}\leq x\leq -\frac{1}{2 \sqrt{3}}}$ and $\set{(x, y) \in GH : \frac{\sqrt{3}}{2}\leq x\leq \frac{1}{2 \sqrt{3}}}$, metioned in Note~\ref{note1}, respectively, by $GH^{(1)}$ and $GH^{(2)}$.

Let us now give the following proposition. 
\begin{lemma} \label{lemma78} 
	For $n\geq 6$, let $\gg_{n-3}$ be a conditional constrained optimal set of $(n-3)$-points for the uniform distribution $Q$ under the conditional set $\hat \gb$ and the constraint $GH$. Then, the conditional constrained optimal set $\ga_n$ of $n$-points for the uniform distribution $P$ under the conditional set $\gb$ and the constraint $A_6A_4 $ is given by
	\[\ga_n=U^{-1}(\gg_{n-3})\uu  \gb.\] 
	The conditional constrained quantization error is given by 
	\[V_n(P)=\frac 1 3 V_{n-3}(Q)+\frac 1 {18}.\] 
\end{lemma} 
\begin{proof} The proof that 
	$\ga_n=U^{-1}(\gg_{n-3})\uu  \gb$ follows from the definition of the isometry. We now prove the expression for quantization error. Let $\gg_{n-3}$ be a conditional constrained optimal set of $(n-3)$-points for the uniform distribution $Q$. Then, 
	\begin{align*}
		V_{n-3}(Q)&=\int_{GI\uu HI} \min_{a\in \gg_{n-3}}\rho((x, y), a)\,dQ(x)=3\int_{GI\uu HI} \min_{a\in \gg_{n-3}}\rho((x, y), a)\,d(UP)(x)\\
		&=\int_{GI\uu HI} \min_{a\in \gg_{n-3}}\rho((x, y), a)\,d(P\circ U^{-1})(x)=3\int_{U^{-1}(GI\uu HI)} \min_{a\in \gg_{n-3}}\rho(U(x, y), a)\,dP(x)\\
		&=3\int_{L_4\uu L_5} \min_{a\in U^{-1}(\gg_{n-3})}\rho(U(x, y), U(a))\,dP(x)=3\int_{L_4\uu L_5} \min_{a\in U^{-1}(\gg_{n-3})}\rho((x, y), a)\,dP(x)\\
		&=3 V(P; \set{U^{-1}(\gg_{n-3}), L_4\uu L_5}).  
	\end{align*} 
	Due to symmetry, we have 
	\begin{align*} &V(P; \set{\gb, L_1\uu L_2\uu L_3\uu L_6})=4 \int_{L_1}\min_{a\in \gb} \rho((x, y), a)\,dP\\
		&=\frac 8 6\int_{0}^{\frac 12} \rho((t-\frac{1}{2}, -\frac{\sqrt{3}}{2}), (-\frac{1}{2},-\frac{\sqrt{3}}{2}))\,dt=\frac{1}{18}.
	\end{align*} 
	Thus, we obtain
	\[V_n(P)=V(P; \set{U^{-1}(\gg_{n-3}), L_4\uu L_5})+V(P; \set{\gb, L_1\uu L_2\uu L_3\uu L_6})=\frac 13 V_{n-3}(Q)+\frac 1{18}.\]
	Thus, the lemma is yielded. 
\end{proof} 

\begin{remark} \label{remark22}
	Due to \eqref{eq1112}, we see that if $\gg_n$ is a conditional constrained optimal set of $n$-points for the uniform distribution $Q$ under the constraint $GI\uu HI$ and the conditional set $\hat\gb$, and $n=2m+1$ for some $m\in \D N$, then $V(Q, \set{\gg_n, GI})=V(Q, \set{\gg_n, HI})$ and hence, 
	\[V_n(Q)=2V(Q, \set{\gg_n, GI}).\]
\end{remark}  
\begin{lemma} \label{lemma79} 
	The conditional constrained optimal set of four-points for $Q$ is given by $\gg_4=\hat\gb\uu \set{(-\frac 12, 0)}$ with conditional constrained quantization error
	$V_4(Q)= \frac{1}{24} \left(5-2 \sqrt{3}\right)$.  
\end{lemma} 

\begin{proof}
	Let $\gg_4:=\hat \gb\uu \set{(a,0)}$ be a conditional constrained optimal set of four-points for some $-\frac{\sqrt 3} 2< a<-\frac{1}{2 \sqrt{3}}$. Let the boundary of the Voronoi regions of $(-\frac{\sqrt 3} 2, 0)$ and $(a, 0)$ intersect $GH$ at a point given by the parameter $t=d_1$, and let the boundary of the Voronoi regions of $(a, 0)$ and  $I(0, \frac 12)$ intersect $GI$ at a point given by the parameter $t=d_2$. Then, notice that $0<d_2<d_1<1$. Solving the canonical equations
	\begin{align*}
		\rho((-\frac{\sqrt 3 d_1}{2}, \frac{1-d_1}{2}), (-\frac{\sqrt 3} 2, 0))&=\rho((-\frac{\sqrt 3 d_1}{2}, \frac{1-d_1}{2}), (a, 0)) \te{ and }\\
		\rho((-\frac{\sqrt 3 d_2}{2}, \frac{1-d_2}{2}), (a, 0))&=\rho((-\frac{\sqrt 3 d_2}{2}, \frac{1-d_2}{2}), (0, \frac 12)
	\end{align*}
	we have 
	\[d_1=-\frac{-\frac{\sqrt{3}}{2}+a}{\sqrt{3}} \te{ and } d_2=\frac{4a^2+1}{2-4 \sqrt{3} a}.\]
	Now, calculate the distortion error 
	\begin{align*}
		V(Q; \set{\gg_4, GI})&=  \frac 12\Big(\int_{d_1}^1 \rho((-\frac{\sqrt 3 t}2, \frac{1-t}2), (-\frac {\sqrt 3} 2, 0))\,dt+ \int_{d_2}^{d_1} \rho((-\frac{\sqrt 3 t}2, \frac{1-t}2), (a, 0))\,dt\\
		&\qquad + \int_{0}^{d_2} \rho((-\frac{\sqrt 3 t}2, \frac{1-t}2), (0, \frac 12))\,dt+\Big)\\
		&=\frac{96 \sqrt{3} a^5+48 a^4-16 \sqrt{3} a^3-24 a^2+6 \sqrt{3} a-1}{24 \left(2 \sqrt{3} a-1\right)^3}.
	\end{align*}
	which is minimum if $a=-\frac 12$ and then $V(Q; \set{\gg_4, GI})=\frac{1}{12} \left(2-\sqrt{3}\right)$. Hence, $\gg_4=\hat \gb\uu \set{(-\frac 12,0)}$, and $V_4(Q)$ is given by 
	\[V_4(Q)=V(Q; \set{\gg_4, GI})+V(Q; \set{\gg_4, HI})=V(Q; \set{\gg_4, GI})+\frac{1}{24}= \frac{1}{24} \left(5-2 \sqrt{3}\right)=0.0639958.\]
	Thus, the lemma is yielded. 
\end{proof} 

\begin{remark} \label{remark23}
	Due to Remark~\ref{remark22}, the conditional constrained optimal set of five-points for $Q$ is given by $\gg_5=\hat\gb\uu \set{(-\frac 12, 0), (\frac 12, 0)}$ with conditional constrained quantization error
	$V_5(Q)= 2 V(Q; \set{\gg_5, GI})=\frac{1}{6} \left(2-\sqrt{3}\right).$
\end{remark}

\begin{prop}
	Let $\ga_n$ be a conditional constrained optimal sets of $n$-points for $P$ with quantization error $V_n$. Then, 
	\begin{align*} \ga_7&= \gb\uu \Big\{\Big(\frac{1}{4}(-\sqrt{3}-1),\frac{1}{4} (\sqrt{3}-1)\Big)\Big\} \te{ with } V_7= \frac{1}{72} \left(9-2 \sqrt{3}\right)  \\
		\ga_8&=\gb\uu \Big\{\Big(\frac{1}{4}(-\sqrt{3}-1),\frac{1}{4} (\sqrt{3}-1)\Big),\Big(\frac{1}{4} (\sqrt{3}-1),\frac{1}{4} (\sqrt{3}+1)\Big)\Big\} \te{ with } V_8=\frac{1}{18} (3-\sqrt{3}).
	\end{align*} 
\end{prop}

\begin{proof}
	The proof follows by Lemma~\ref{lemma78}, Lemma~\ref{lemma79}, and Remark~\ref{remark23} (see Figure~\ref{Fig6}). 
\end{proof} 

We now give a methodology to obtain the conditional constrained optimal sets of $n$-points for the uniform distribution $Q$ for any $n\geq 6$. 
\begin{method} \label{method1} 
	For $n\geq 6$, let $\gg_n$ be a conditional constrained optimal set of $n$-points such that $\te{card}(\gg_n\ii GH^{(1)})=n_1$ and $\te{card}(\gg_n\ii GH^{(2)})=n_2$. Then, we know that $|n_1-n_2|\in \set{0,1}$. Also, $n=n_1+n_2+1$. If $n=6$, without any loss of generality, we can assume that $n_1=3$ and $n_2=2$. If $n\geq 7$, then always $n_1, n_2\geq 3$.   To calculate $\gg_n\ii GH^{(1)}$ we proceed as follows. Let 
	\begin{align*}
		\gg_n\ii GH^{(1)}&=\set{(a(j), 0): 1\leq j\leq n_1} \te{ with } a(i)<a(j)  \te{ for }  1\leq i<j\leq n_1, 
	\end{align*} 
	where $a(1)=-\frac{\sqrt 3} 2$. 
	For $1\leq j<n_1-1$, let the boundary of the Voronoi regions of $(a(j), 0)$ and $(a(j+1), 0)$ intersect the side $GI$ at a point represented by the parameter $t=d(j)$. For $1\leq j\leq n_1-1$ solving the canonical equations 
	\[\rho((-\frac{\sqrt 3d(j)}{2}, \frac{1-d(j)} 2), (a(j),0))=\rho((-\frac{\sqrt 3d(j)}{2}, \frac{1-d(j)} 2), (a(j+1),0)),\]
	we have $d(j)=-\frac{a(j)+a(j+1)}{\sqrt{3}}$. Let the boundary of the Voronoi regions of $(a(n_1), 0)$ and $(0, \frac 12)$ intersect the side $GI$ at a point represented by the parameter $t=d(n_1)$. Then, solving the canonical equation  
	\[\rho((-\frac{\sqrt 3d(n_1)}{2}, \frac{1-d(n_1)} 2), (a(n_1),0))=\rho((-\frac{\sqrt 3d(n_1)}{2}, \frac{1-d(n_1)} 2), (0, \frac 12)),\]
	we have 
	$d(n_1)= \frac{4 a(n_1)^2+1}{2-4 \sqrt{3} a(n_1)}$. We now calculate the distortion error $V(Q; \set{\gg_n, GI})$ as follows: 
	\begin{align} \label{Me82} 
		&V(Q; \set{\gg_n, GI})=\frac 12\Big(\int_{d(1)}^1\rho((-\frac{\sqrt 3 t}2, \frac{1-t}2), (a(1),0))\,dt \notag\\
		&=\sum_{j=2}^{n_1-1}\int_{d(j)}^{d(j-1)}\rho((-\frac{\sqrt 3 t}2, \frac{1-t}2), (a(j),0)) \,dt+\int_{d(n_1)}^{d(n_1-1)}\rho((-\frac{\sqrt 3 t}2, \frac{1-t}2), (a(n_1),0)) \,dt \notag\\
		&\qquad +\int_{0}^{d(n_1)}\rho((-\frac{\sqrt 3 t}2, \frac{1-t}2), (0, \frac 12))\, dt\Big).
	\end{align}
	Recall that $a(1)=-\frac{\sqrt 3} 2$.  
	After putting the corresponding values of $d(j)$ for $1\leq j\leq n_1$, and then upon simplification, we obtain $V(Q; \set{\gg_n, GI})$ as a function of $a(j)$ for $2\leq j\leq n_1$. Then, after solving the partial derivatives 
	\begin{equation} \label{Me831} \frac{\pa}{\pa a(j)}(V(Q; \set{\gg_n, GI})=0\end{equation} 
	we obtain the values of $a(j)$ for $2\leq j\leq n_1$ for which $V(Q; \set{\gg_n, GI})$ is minimum. Thus, $\gg_n\ii GH^{(1)}$ and the corresponding distortion error $V(Q; \set{\gg_n, GI})$ are obtained for $n_1\geq 3$. If $n_2=2$, then by Lemma~\ref{lemma79}, we have $\gg_n\ii GH^{(2)}=\set{(\frac 12, 0), (\frac{\sqrt 3} 2, 0)}$. If $n_2\geq 3$, we replace $n_1$ by $n_2$ in the above procedure, and obtain the set $\gg_n\ii GH^{(1)}$ for $\te{card}(\gg_n\ii GH^{(1)})=n_2$, and then reflect it with respect to the $y$-axis to obtain the set $\gg_n\ii GH^{(2)}$ with $\te{card}(\gg_n\ii GH^{(2)})=n_2$. Likewise, we obtain the distortion error $V(Q; \set{\gg_n, HI})$ as  $V(Q; \set{\gg_n, HI})=V(Q; \set{\gg_n, GI})$ with $\te{card}(\gg_n\ii GH^{(1)})=\te{card}(\gg_n\ii GH^{(2)})=n_2$.
\end{method} 

Using Methodology~\ref{method1}, we obtain the following examples. 
\begin{exam} \label{Me83}
	To obtain $\gg_6(Q)$ and the corresponding conditional constrained quantization error $V_6(Q)$ for the uniform distribution $Q$, we proceed as follows. Let $n=6$ and choose $n_1=3$ and $n_2=2$. Putting $n_1=3$ in Expression~\eqref{Me82} and then upon simplification, we have 
	\begin{align*}
		V(Q; \set{\gg_6, GI})&=\frac{1}{48 (2 \sqrt{3} a(3)-1)^3}\Big(-2+15 \sqrt{3} a(3)-96 a(3)^2+40 \sqrt{3} a(3)^3+96 a(3)^4+48 \sqrt{3} a(3)^5\\
		&+a(2) \Big(4 a(3)^2-3\Big)\Big(-72 a(3)^3+36 \sqrt{3} a(3)^2-18 a(3)+\sqrt{3}\Big)\\
		&+2 a(2)^2 \Big(144 a(3)^4-72 a(3)^2+16 \sqrt{3} a(3)-3\Big)\Big).
	\end{align*}
	Now, solving the partial derivatives $\frac{\pa}{\pa a(j)}(V(Q; \set{\gg_n, GI})=0$ for $j=2, 3$, we have 
	$a(2)=-0.639488$ and $a(3)= -0.41295$. 
	Thus, we have 
	\begin{equation} \label{Me91}
		\gg_6\ii GH^{(1)}=\set{(-\frac{\sqrt 3}2, 0), (-0.639488, 0),  ( -0.41295, 0)} \te{ with } V(Q; \set{\gg_6, GI})=0.0199575.
	\end{equation} 
	Moreover, as $n_2=2$, using the properties of reflection by Lemma~\ref{lemma79} as mentioned in Methodology~\ref{method1}, we have 
	\[\gg_6\ii GH^{(2)}=\set{(\frac 12, 0), (\frac{\sqrt 3}2, 0)}\te{ with } V(Q; \set{\gg_6, HI})=\frac{1}{12} (2-\sqrt{3}).\]
	Thus, we have 
	\[\gg_6=\hat \gb\UU \set{(-0.639488, 0),  ( -0.41295, 0), (\frac 12, 0)},\]
	with $V_6(Q)=V(Q; \set{\gg_6, GI})+V(Q; \set{\gg_6, HI})=0.0199575+\frac{1}{12} (2-\sqrt{3})=0.0422866$. 
\end{exam} 

\begin{exam} \label{Me84}
	To obtain $\gg_7(Q)$ and the corresponding conditional constrained quantization error $V_7(Q)$ for the uniform distribution $Q$, we proceed as follows. Let $n=7$ and choose $n_1=3$ and $n_2=3$. Then, by \eqref{Me91}, we have 
	\begin{align*}
		\gg_7\ii GH^{(1)}&=\gg_6\ii GH^{(1)}=\set{(-\frac{\sqrt 3}2, 0), (-0.639488, 0),  ( -0.41295, 0)} \te{ with } \\
		V(Q; \set{\gg_7, GI})&= V(Q; \set{\gg_6, GI})=0.0199575.
	\end{align*} 
	Moreover, as $n_2=3$, using the properties of reflection as mentioned in Methodology~\ref{method1}, we have 
	\begin{align*}
		\gg_7\ii GH^{(2)}&=\set{( 0.41295, 0), (0.639488, 0), (\frac{\sqrt 3}2, 0)} \te{ with } \\
		V(Q; \set{\gg_7, HI})&= V(Q; \set{\gg_7, GI})=0.0199575.
	\end{align*}
	Thus, we have 
	\[\gg_7=\hat \gb \UU \set{(-0.639488, 0),  ( -0.41295, 0), ( 0.41295, 0), (0.639488, 0)}\]
	with $V_7(Q)=V(Q; \set{\gg_7, GI})+V(Q; \set{\gg_7, HI})=0.0199575+0.0199575=0.039915$. 
\end{exam}

\begin{figure}
	\vspace{-0.3 in} 
	\begin{tikzpicture}[line cap=round,line join=round,>=triangle 45,x=0.5 cm,y=0.5 cm]
		\clip(-7.5,-3.8) rectangle (5.5,5.5);
		\draw [line width=0.8 pt] (-2.,3.4641016151377544)-- (-4.,0.)-- (-2.,-3.4641016151377544)-- (2.,-3.4641016151377544)-- (4.,0.)-- (2.,3.4641016151377544);
		\draw [line width=0.8 pt] (-2.,3.4641016151377544)-- (2.,3.4641016151377544);
		\draw (1.8,4.66) node[anchor=north west] {$A_4(\frac 12, \frac{\sqrt{3}}{2})$};
		\draw (-6.4,4.66) node[anchor=north west] {$A_5(-\frac 12, \frac{\sqrt{3}}{2})$};
		\draw (-7.9,0.65) node[anchor=north west] {$A_6(-1, 0)$};
		\draw [line width=0.2 pt,dash pattern=on 1pt off 1pt] (0.,0.)-- (-2.,-3.4641016151377544);
		\draw [line width=0.2,dash pattern=on 1pt off 1pt] (0.,0.)-- (2.,-3.4641016151377544);
		\draw [line width=0.2,dash pattern=on 1pt off 1pt] (0.,0.)-- (2.,3.4641016151377544);
		\draw [line width=0.2,dash pattern=on 1pt off 1pt] (0.,0.)-- (-4.,0.);
		\draw [line width=0.2,dash pattern=on 1pt off 1pt] (0.,0.)-- (4.,0.);
		\draw [line width=0.2,dash pattern=on 1pt off 1pt] (0.,-0.02)-- (-2.,3.4441016151377544);
		\draw [line width=0.2 pt,dash pattern=on 1pt off 1pt] (-2., -3.4641)-- (4, 0);
		\draw [line width=0.2 pt,dash pattern=on 1pt off 1pt] (2., -3.4641)-- (2.,3.4641016151377544);
		\draw [line width=0.8 pt] (-4, 0)-- (2.,3.4641016151377544);
		\draw [line width=0.2 pt,dash pattern=on 1pt off 1pt] (-4, 0)-- (2.,-3.4641016151377544);
		\draw [line width=0.2 pt,dash pattern=on 1pt off 1pt] (4, 0)-- (-2.,3.4641016151377544);
		\draw [line width=0.2 pt,dash pattern=on 1pt off 1pt] (-2.,3.4641016151377544)-- (-2.,-3.4641016151377544);
		\begin{scriptsize}
			\draw [fill=ffqqqq] (-2.,-3.4641016151377544) circle (1.5pt);
			\draw [fill=ffqqqq] (2.,-3.4641016151377544) circle (1.5pt);
			\draw [fill=ffqqqq] (4.,0.) circle (1.5 pt);
			\draw [fill=ffqqqq] (2.,3.4641016151377544) circle (1.5 pt);
			\draw [fill=ffqqqq] (-2.,3.4641016151377544) circle (1.5 pt);
			\draw [fill=ffqqqq] (-4.,0.) circle (1.5pt);
			\draw [fill=ffqqqq] (-2.73205, 0.732051) circle (1.5 pt);
		\end{scriptsize}
	\end{tikzpicture}
	\begin{tikzpicture}[line cap=round,line join=round,>=triangle 45,x=0.5 cm,y=0.5 cm]
		\clip(-7.5,-3.8) rectangle (5.5,5.5);
		\draw [line width=0.8 pt] (-2.,3.4641016151377544)-- (-4.,0.)-- (-2.,-3.4641016151377544)-- (2.,-3.4641016151377544)-- (4.,0.)-- (2.,3.4641016151377544);
		\draw [line width=0.8 pt] (-2.,3.4641016151377544)-- (2.,3.4641016151377544);
		\draw (1.8,4.66) node[anchor=north west] {$A_4(\frac 12, \frac{\sqrt{3}}{2})$};
		\draw (-6.4,4.66) node[anchor=north west] {$A_5(-\frac 12, \frac{\sqrt{3}}{2})$};
		\draw (-7.9,0.65) node[anchor=north west] {$A_6(-1, 0)$};
		\draw [line width=0.2 pt,dash pattern=on 1pt off 1pt] (0.,0.)-- (-2.,-3.4641016151377544);
		\draw [line width=0.2,dash pattern=on 1pt off 1pt] (0.,0.)-- (2.,-3.4641016151377544);
		\draw [line width=0.2,dash pattern=on 1pt off 1pt] (0.,0.)-- (2.,3.4641016151377544);
		\draw [line width=0.2,dash pattern=on 1pt off 1pt] (0.,0.)-- (-4.,0.);
		\draw [line width=0.2,dash pattern=on 1pt off 1pt] (0.,0.)-- (4.,0.);
		\draw [line width=0.2,dash pattern=on 1pt off 1pt] (0.,-0.02)-- (-2.,3.4441016151377544);
		\draw [line width=0.2 pt,dash pattern=on 1pt off 1pt] (-2., -3.4641)-- (4, 0);
		\draw [line width=0.2 pt,dash pattern=on 1pt off 1pt] (2., -3.4641)-- (2.,3.4641016151377544);
		\draw [line width=0.8 pt] (-4, 0)-- (2.,3.4641016151377544);
		\draw [line width=0.2 pt,dash pattern=on 1pt off 1pt] (-4, 0)-- (2.,-3.4641016151377544);
		\draw [line width=0.2 pt,dash pattern=on 1pt off 1pt] (4, 0)-- (-2.,3.4641016151377544);
		\draw [line width=0.2 pt,dash pattern=on 1pt off 1pt] (-2.,3.4641016151377544)-- (-2.,-3.4641016151377544);
		\begin{scriptsize}
			\draw [fill=ffqqqq] (-2.,-3.4641016151377544) circle (1.5pt);
			\draw [fill=ffqqqq] (2.,-3.4641016151377544) circle (1.5pt);
			\draw [fill=ffqqqq] (4.,0.) circle (1.5 pt);
			\draw [fill=ffqqqq] (2.,3.4641016151377544) circle (1.5 pt);
			\draw [fill=ffqqqq] (-2.,3.4641016151377544) circle (1.5 pt);
			\draw [fill=ffqqqq] (-4.,0.) circle (1.5pt);
			\draw [fill=ffqqqq] (-2.73205, 0.732051) circle (1.5 pt);
			\draw [fill=ffqqqq] (0.732051, 2.73205) circle (1.5 pt);
		\end{scriptsize}
	\end{tikzpicture}\\
	\begin{tikzpicture}[line cap=round,line join=round,>=triangle 45,x=0.5 cm,y=0.5 cm]
		\clip(-7.5,-3.8) rectangle (5.5,5.5);
		\draw [line width=0.8 pt] (-2.,3.4641016151377544)-- (-4.,0.)-- (-2.,-3.4641016151377544)-- (2.,-3.4641016151377544)-- (4.,0.)-- (2.,3.4641016151377544);
		\draw [line width=0.8 pt] (-2.,3.4641016151377544)-- (2.,3.4641016151377544);
		\draw (1.8,4.66) node[anchor=north west] {$A_4(\frac 12, \frac{\sqrt{3}}{2})$};
		\draw (-6.4,4.66) node[anchor=north west] {$A_5(-\frac 12, \frac{\sqrt{3}}{2})$};
		\draw (-7.9,0.65) node[anchor=north west] {$A_6(-1, 0)$};
		\draw [line width=0.2 pt,dash pattern=on 1pt off 1pt] (0.,0.)-- (-2.,-3.4641016151377544);
		\draw [line width=0.2,dash pattern=on 1pt off 1pt] (0.,0.)-- (2.,-3.4641016151377544);
		\draw [line width=0.2,dash pattern=on 1pt off 1pt] (0.,0.)-- (2.,3.4641016151377544);
		\draw [line width=0.2,dash pattern=on 1pt off 1pt] (0.,0.)-- (-4.,0.);
		\draw [line width=0.2,dash pattern=on 1pt off 1pt] (0.,0.)-- (4.,0.);
		\draw [line width=0.2,dash pattern=on 1pt off 1pt] (0.,-0.02)-- (-2.,3.4441016151377544);
		\draw [line width=0.2 pt,dash pattern=on 1pt off 1pt] (-2., -3.4641)-- (4, 0);
		\draw [line width=0.2 pt,dash pattern=on 1pt off 1pt] (2., -3.4641)-- (2.,3.4641016151377544);
		\draw [line width=0.8 pt] (-4, 0)-- (2.,3.4641016151377544);
		\draw [line width=0.2 pt,dash pattern=on 1pt off 1pt] (-4, 0)-- (2.,-3.4641016151377544);
		\draw [line width=0.2 pt,dash pattern=on 1pt off 1pt] (4, 0)-- (-2.,3.4641016151377544);
		\draw [line width=0.2 pt,dash pattern=on 1pt off 1pt] (-2.,3.4641016151377544)-- (-2.,-3.4641016151377544);
		\begin{scriptsize}
			\draw [fill=ffqqqq] (-2.,-3.4641016151377544) circle (1.5pt);
			\draw [fill=ffqqqq] (2.,-3.4641016151377544) circle (1.5pt);
			\draw [fill=ffqqqq] (4.,0.) circle (1.5 pt);
			\draw [fill=ffqqqq] (2.,3.4641016151377544) circle (1.5 pt);
			\draw [fill=ffqqqq] (-2.,3.4641016151377544) circle (1.5 pt);
			\draw [fill=ffqqqq] (-4.,0.) circle (1.5pt);
			\draw [fill=ffqqqq] (-3.21525, 0.453075) circle (1.5 pt);
			\draw [fill=ffqqqq] (-2.4305, 0.906151) circle (1.5 pt);
			\draw [fill=ffqqqq] (0.732051, 2.73205) circle (1.5 pt);
		\end{scriptsize}
	\end{tikzpicture}
	\begin{tikzpicture}[line cap=round,line join=round,>=triangle 45,x=0.5 cm,y=0.5 cm]
		\clip(-7.5,-3.8) rectangle (5.5,5.5);
		\draw [line width=0.8 pt] (-2.,3.4641016151377544)-- (-4.,0.)-- (-2.,-3.4641016151377544)-- (2.,-3.4641016151377544)-- (4.,0.)-- (2.,3.4641016151377544);
		\draw [line width=0.8 pt] (-2.,3.4641016151377544)-- (2.,3.4641016151377544);
		\draw (1.8,4.66) node[anchor=north west] {$A_4(\frac 12, \frac{\sqrt{3}}{2})$};
		\draw (-6.4,4.66) node[anchor=north west] {$A_5(-\frac 12, \frac{\sqrt{3}}{2})$};
		\draw (-7.9,0.65) node[anchor=north west] {$A_6(-1, 0)$};
		\draw [line width=0.2 pt,dash pattern=on 1pt off 1pt] (0.,0.)-- (-2.,-3.4641016151377544);
		\draw [line width=0.2,dash pattern=on 1pt off 1pt] (0.,0.)-- (2.,-3.4641016151377544);
		\draw [line width=0.2,dash pattern=on 1pt off 1pt] (0.,0.)-- (2.,3.4641016151377544);
		\draw [line width=0.2,dash pattern=on 1pt off 1pt] (0.,0.)-- (-4.,0.);
		\draw [line width=0.2,dash pattern=on 1pt off 1pt] (0.,0.)-- (4.,0.);
		\draw [line width=0.2,dash pattern=on 1pt off 1pt] (0.,-0.02)-- (-2.,3.4441016151377544);
		\draw [line width=0.2 pt,dash pattern=on 1pt off 1pt] (-2., -3.4641)-- (4, 0);
		\draw [line width=0.2 pt,dash pattern=on 1pt off 1pt] (2., -3.4641)-- (2.,3.4641016151377544);
		\draw [line width=0.8 pt] (-4, 0)-- (2.,3.4641016151377544);
		\draw [line width=0.2 pt,dash pattern=on 1pt off 1pt] (-4, 0)-- (2.,-3.4641016151377544);
		\draw [line width=0.2 pt,dash pattern=on 1pt off 1pt] (4, 0)-- (-2.,3.4641016151377544);
		\draw [line width=0.2 pt,dash pattern=on 1pt off 1pt] (-2.,3.4641016151377544)-- (-2.,-3.4641016151377544);
		\begin{scriptsize}
			\draw [fill=ffqqqq] (-2.,-3.4641016151377544) circle (1.5pt);
			\draw [fill=ffqqqq] (2.,-3.4641016151377544) circle (1.5pt);
			\draw [fill=ffqqqq] (4.,0.) circle (1.5 pt);
			\draw [fill=ffqqqq] (2.,3.4641016151377544) circle (1.5 pt);
			\draw [fill=ffqqqq] (-2.,3.4641016151377544) circle (1.5 pt);
			\draw [fill=ffqqqq] (-4.,0.) circle (1.5pt);
			\draw [fill=ffqqqq] (-3.21525, 0.453075) circle (1.5 pt);
			\draw [fill=ffqqqq] (-2.4305, 0.906151) circle (1.5 pt);
			\draw [fill=ffqqqq] (1.21525, 3.01103) circle (1.5 pt);
			\draw [fill=ffqqqq] (0.430501, 2.55795) circle (1.5 pt);
		\end{scriptsize}
	\end{tikzpicture}
	\caption{In conditional constrained quantization with the constraint as the diagonal $A_6A_4 $ the dots represent the elements in an optimal set of $n$-points for $7\leq n\leq 10$.} \label{Fig6}
\end{figure}

\begin{prop}
	Let $\ga_n$ be a conditional constrained optimal set of $n$-points for $P$ with conditional constrained quantization error $V_n$. Then, 
	\begin{align*} \ga_9&= \gb\uu \Big\{(-0.803813,0.113269), (-0.607625,0.226538), (0.183013, 0.683013)\Big\} \\
		&\qquad \te{ with } V_9=\frac{0.0422866}{3}+\frac{1}{18}=0.0696511,  \\
		\ga_{10}&=\gb\uu \Big\{(-0.803813, 0.113269), (-0.607625,0.226538), (0.107625, 0.639488), (0.303813, 0.752757)\Big\} \\
		&\qquad \te{ with } V_{10}=\frac{0.039915}{3}+\frac{1}{18}=0.0688606
	\end{align*} 
\end{prop}

\begin{proof}
	The proof follows by Lemma~\ref{lemma78}, Example~\ref{Me83}, and Example~\ref{Me84} (see Figure~\ref{Fig6}). 
\end{proof}

\begin{main}
	Recall that the main result of this subsection is to obtain the conditional constrained optimal sets $\ga_n:=\ga_n(P)$ of $n$-points and the corresponding conditional constrained quantization errors $V_n(P)$  for all $n\geq 6$ for the uniform distribution $P$ under the constraint as the smaller diagonal $A_6A_4 $ and the conditional set $\gb$. As shown in Example~\ref{Me83} and Example~\ref{Me84}, using Methodology~\ref{method1}, we obtain $\gg_{n-3}(Q)$ and $V_{n-3}(Q)$ for the uniform distribution $Q$. Then, using Lemma~\ref{lemma78}, we obtain the sets $\ga_n$ and the corresponding conditional constrained quantization errors $V_n(P)$ for all $n\geq 6$.
\end{main}

\subsection{When the constraint is a diagonal of the larger length} \label{sub1}
As depicted in Figure~\ref{Fig1}, there
are three diagonals of the larger length. Without any loss of generality, we can choose the diagonal $A_1A_4$
of the larger length as the constraint. We now give the following lemma. 

\begin{lemma}\label{0Me421}
	Let $\ga_n$ be a conditional constrained optimal set of $n$-points for any $n\geq 7$. Then, the Voronoi region of any element in $\ga_n\ii \te{Int}(A_1A_4)$ does not contain any element from $L_2\uu L_5$. 
\end{lemma} 
\begin{proof}
	Let $\ga_n$ be a conditional constrained optimal set of $n$-points for $n \geq 7$.
	For any $(x, y) \in L_2\uu L_5$, we see that 
	\[\min_{a\in \ga_n\ii A_1A_4}\rho((x, y), a)>\min_{a\in \gb}\rho((x, y), a).\]
	Hence, the Voronoi region of any element in $\ga_n\ii \te{Int}(A_1A_4)$ does not contain any element from $L_2\uu L_5$.
\end{proof}
\begin{lemma} \label{Me0422}
	For any $n\geq 6$, a conditional constrained optimal set of $n$-points contains exactly $n$-elements. 
\end{lemma} 
\begin{proof}
	$\gb$ is a conditional constrained optimal set of six-points that contains exactly six elements. Let $n \geq 7$. 
	The equation of the diagonal $A_1A_4$ is $y=\sqrt 3 x$. The sides $L_1$ and $L_6$ are, respectively,  
	given by
	\[L_1=\{(t-\frac{1}{2},-\frac{\sqrt{3}}{2}): 0\leq t\leq 1\} \te{ and } L_6=\{(\frac{1}{2} (-t-1),\frac{1}{2} \sqrt{3} (t-1)): 0\leq t\leq 1\}.\]
	Let us consider an element $(t-\frac{1}{2},-\frac{\sqrt{3}}{2})$ on $L_1$. Its distance from the line  $y=\sqrt 3 x$ is given by $d_1:=\frac{\sqrt 3} 2t$. On the other hand, the distance of the element $(t-\frac{1}{2},-\frac{\sqrt{3}}{2})$ from $A_1(-\frac 12, -\frac{\sqrt 3}2)$ is $d_2:=t$, and from $A_2(\frac 12, -\frac{\sqrt 3}{2})$ is $d_3:=1-t$. Notice that $d_1< d_2$ for $0< t\leq 1$. On the other hand, $d_1<d_3$ for $0\leq t< \frac{2}{\sqrt{3}+2}$. 
	Hence, the possible elements in $L_1$ that can fall within the Voronoi regions of the elements in $\ga_n\ii \te{Int}(A_1A_4)$ are for $0< t< \frac{2}{\sqrt{3}+2}$. Similarly, the possible elements in $L_6$ that can fall within the Voronoi regions of the elements in $\ga_n\ii \te{Int}(A_1A_4)$ are for $0< t< \frac{2}{\sqrt{3}+2}$. Notice that $t=0$ corresponds to the point $(-\frac 12, -\frac {\sqrt 3}2)$, and $t=\frac{2}{\sqrt{3}+2}$ corresponds to the point $(\frac{7}{2}-2 \sqrt{3},-\frac{\sqrt{3}}{2})$ on the side $L_1$. If $(u, v)$ be the foot of the perpendicular from the point $(\frac{7}{2}-2 \sqrt{3},-\frac{\sqrt{3}}{2})$ on the diagonal $A_1A_4$, then we have
	\[(u, v)=\Big(\frac{1}{2}  (1-\sqrt 3 ), \frac{\sqrt 3}{2}(1-\sqrt 3 )\Big).\]
	Thus, we can conclude that the elements of $\ga_n\ii \te{Int}(A_1A_4)$, the Voronoi regions of which cover the elements from $L_1\uu L_6$ will lie on the line $y=\sqrt 3 x$ for $-\frac 12<x<\frac 12 (1-\sqrt 3 )$. Similarly, we can conclude that the elements of $\ga_n\ii \te{Int}(A_1A_4)$, the Voronoi regions of which cover the elements from $L_3\uu L_4$ will lie on the line $y=\sqrt 3 x$ for $\frac 12(\sqrt 3-1)<x<\frac{1}{2}$. Hence, if the optimal set $\ga_n$ contains fewer than $n$ elements, then by adding at least one element in $\ga_n$ which comes from $y=\sqrt 3 x$ either for $-\frac 12<x<\frac 12 (1-\sqrt 3 )$, or for $\frac 12(\sqrt 3-1)<x<\frac{1}{2}$, we can strictly reduce the quantization error, which contradicts the fact that $\ga_n$ is a conditional constrained optimal set of $n$-points. Thus, we conclude that a conditional constrained optimal set of $n$-points contains exactly $n$-elements, which completes the proof of the lemma. 
\end{proof} 

\begin{remark}
	Recall that $\ga_n$ is a conditional constrained optimal set of $n$-points for $n\geq 6$. Then, 
	\[\ga_n\ii A_1A_4=(\ga_n\ii \te{Int}(A_1A_4))\uu \set{(-\frac 12, -\frac{\sqrt 3}{2}), (\frac 12, \frac{\sqrt 3}{2})}.\]
	As mentioned in the proof of Lemma~\ref{Me0422}, we have 
	\begin{equation} \label{eq01111} \ga_n\ii A_1A_4 \ci \set{(x, y) \in A_1A_4 : -\frac 12\leq x\leq \frac 12 (1-\sqrt 3 )}\uu\set {(x, y)\in A_1A_4 : \frac 12(\sqrt 3-1)\leq x\leq \frac{1}{2}}.
	\end{equation}
\end{remark} 

\begin{remark} 
	By the expression in \eqref{eq01111}, we can conclude that for any $a\in \ga_n\ii A_1A_4$, if the Voronoi region of the element $a$ includes elements from $L_1\uu L_6$, then it will not include any element from $L_3\uu L_4$. We denote such elements in $\ga_n\ii A_1A_4$ by $(\ga_n\ii A_1A_4)^{(1)}$. Similarly, for any $a\in \ga_n\ii A_1A_4$, if the Voronoi region of the element $a$ includes elements from $L_3\uu L_4$, then it will not include any element from $L_1\uu L_6$. We denote such elements in $\ga_n\ii A_1A_4$ by $(\ga_n\ii A_1A_4)^{(2)}$.
	Notice that 
	\[\ga_n\ii A_1A_4=(\ga_n\ii A_1A_4)^{(1)}\UU (\ga_n\ii A_1A_4)^{(2)}, \te{ and } (\ga_n\ii A_1A_4)^{(1)}\II (\ga_n\ii A_1A_4)^{(2)}=\es.\]
\end{remark}

Let us now state the following lemma. The proof is similar to Lemma~\ref{lemmaMe1}, in fact, $P$ being a uniform distribution, and $L_1\uu L_6$ and $L_3\uu L_4$ are symmetrically located with respect to the perpendicular bisector of the segment $A_1A_4$, it is not difficult to show the proof. 

\begin{lemma} \label{Megha2} 
	For $n\geq 7$, let $(\ga_n\ii A_1A_4)^{(1)}$ and $(\ga_n\ii A_1A_4)^{(2)}$ be two sets as defined before. Let $\te{card}((\ga_n\ii A_1A_4)^{(1)}))=n_1$ and $\te{card}((\ga_n\ii A_1A_4)^{(2)}))=n_2$. Then, $n_1, n_2\geq 1 $ with $|n_1-n_2|\in \set{0,1}$. 
\end{lemma} 

\begin{note}
	Notice that due to symmetry if $\te{card}((\ga_n\ii A_1A_4)^{(1)})=\te{card}((\ga_n\ii A_1A_4)^{(2)})$, then $F((\ga_n\ii A_1A_4)^{(1)})=(\ga_n\ii A_1A_4)^{(2)}$, where $F: R^2\to \D R^2$ is the reflection mapping through the origin, i.e., the mapping $F$ is defined by 
	\begin{equation*} \label{eqMegha1}  F : \D R^2\to \D R^2 \te{ such that } F(x, y)=(-x, -y). 
	\end{equation*} 
	For $m\in \D N$, by $(\ga_n\ii A_1A_4)^{(i)}(m)$, it is meant the set $(\ga_n\ii A_1A_4)^{(i)}$ which contains $m$ elements, where $i=1, 2$. Hence, for any $n\in \D N$, to calculate $(\ga_n\ii A_1A_4)^{(2)}(m)$, first we will calculate $(\ga_n\ii A_1A_4)^{(1)}(m)$ and then will use the reflection property, i.e., 
	\begin{equation} \label{eqMegha2}  (\ga_n\ii A_1A_4)^{(2)}(m)=F((\ga_n\ii A_1A_4)^{(1)}(m)).
	\end{equation} 
	Recall Definition~\ref{defiMe}. Due to symmetry, we get the following error relationship: 
	\begin{align} \label{eqMegha3} 
		\left\{\begin{array} {ccc} 
			V(P; (\ga_n\ii A_1A_4)^{(1)}(m), L_1\uu L_6)=V(P; (\ga_n\ii A_1A_4)^{(2)}(m), L_3\uu L_4), \\
			V(P; (\ga_n\ii A_1A_4)^{(1)}(m), L_1)=V(P; (\ga_n\ii A_1A_4)^{(1)}(m), L_6), \\
			V(P; (\ga_n\ii A_1A_4)^{(2)}(m), L_3)=V(P; (\ga_n\ii A_1A_4)^{(2)}(m), L_4).
		\end{array}
		\right.
	\end{align} 
	For $n\geq 6$, let $V_n(P)$ be the $n$th constrained quantization error with respect to the constraint $A_1A_4$ for the probability distribution $P$. Notice that $\te{card}(\ga_n\ii A_1A_4)=n-4$. Let $(\ga_n\ii A_1A_4)^{(1)}=q$, and then $(\ga_n\ii A_1A_4)^{(2)}=n-4-q$, where $q\geq 1$.  Then,  
	\begin{align*}
		V_n(P)=V(P: (\ga_n\setminus \ga_n\ii A_1A_4), L)+V(P; \ga_n\ii A_1A_4, L_1\uu L_6\uu L_3\uu L_4),
	\end{align*}
	where due to symmetry, we have
	\begin{align*}
		&V(P; \ga_n\setminus (\ga_n\ii A_1A_4), L)=V(P; \set{A_2, A_3, A_5, A_6}, L)\\
		&=\frac 4 6 \Big(\int_{t=0}^{\frac 12}\rho((t-\frac{1}{2},-\frac{\sqrt{3}}{2}), (-\frac{1}{2},-\frac{\sqrt{3}}{2}))\, dt+\int_{t=\frac{2}{\sqrt{3}+2}}^{1}\rho((t-\frac{1}{2},-\frac{\sqrt{3}}{2}), (\frac{1}{2},-\frac{\sqrt{3}}{2}))\, dt\Big)\\
		&=\frac{13}{36}  (48 \sqrt{3}-83),
	\end{align*}
	and 
	\begin{align*}
		&V(P; \ga_n\ii A_1A_4, L_1\uu L_6\uu L_3\uu L_4)\\
		&=V(P; (\ga_n\ii A_1A_4)^{(1)}(q), L_1\uu L_6)+V(P; (\ga_n\ii A_1A_4)^{(2)}(n-4-q), L_3\uu L_4)\\
		&=2\Big(V(P; (\ga_n\ii A_1A_4)^{(1)}(q), L_1)+V(P; (\ga_n\ii A_1A_4)^{(2)}(n-4-q), L_3)\Big)\\
		&=2\Big(V(P; (\ga_n\ii A_1A_4)^{(1)}(q), L_1)+V(P; (\ga_n\ii A_1A_4)^{(1)}(n-4-q), L_1)\Big).
	\end{align*}
	Thus, we see that 
	\begin{equation} \label{eqMegha4} 
		V_n(P)=\frac{13}{36}  (48 \sqrt{3}-83)+2\Big(V(P; (\ga_n\ii A_1A_4)^{(1)}(q), L_1)+V(P; (\ga_n\ii A_1A_4)^{(1)}(n-4-q), L_1)\Big).
	\end{equation} 
\end{note}

\begin{main} \label{mainMegha} 
	Let $\te{card}(\ga_n\ii A_1A_4)^{(1)}=q$. Then, by Lemma~\ref{Megha2},  
	$|q-(n-4-q)|\in \set{0,1}$. Without any loss of generality, we can assume that $q\geq (n-4-q).$ Then, if $n-4$ is an even positive integer, then $q=\frac{n-4}2$, and if $n-4$ is an odd positive integer then, $q=\frac {n-3}2$. Notice that if $n=6$, then $\ga_n=\gb$ which is the conditional set itself with conditional constrained quantization error
	\[V_n(P)=V_6(P)=6 V(P; \gb \ii L_1, L_1)=\frac {12} 6  \int_{t=0}^{\frac 12} \rho((t-\frac{1}{2},-\frac{\sqrt{3}}{2}), (-\frac{1}{2},-\frac{\sqrt{3}}{2}))\, dt=\frac 1{12}.\]
	By the expressions given by \eqref{eqMegha2} and \eqref{eqMegha4}, we can say that to find a conditional constrained optimal set $\ga_n$ of $n$-points and the corresponding $n$th conditional constrained quantization error $V_n(P)$ for any $n\geq 7$, we need to know $(\ga_n\ii A_1A_4)^{(1)}(q)$ and $V(P; (\ga_n\ii A_1A_4)^{(1)}(q), L_1)$ for any positive integer $q\geq 2$, which can easily be obtained by the following proposition Proposition~\ref{PropMe}, while for $n-4-q=1$, we have $F((\ga_n\ii A_1A_2)^{(1)}(n-4-q))=(\frac 12, \frac{\sqrt 3}2)$. Then, the conditional constrained optimal set $\ga_n(P)$ is given by 
	\[\ga_n(P)=\gb\uu (\ga_n\ii A_1A_2)^{(1)}(q)\uu F((\ga_n\ii A_1A_2)^{(1)}(n-4-q)),\]
	and the corresponding conditional constrained quantization error $V_n(P)$ is obtained by using the formula given by \eqref{eqMegha4}.
\end{main} 
\begin{prop}\label{PropMe}
	For any positive integer $q\geq 2$, the set $(\ga_n\ii A_1A_4)^{(1)}(q)$ is given by 
	\[(\ga_n\ii A_1A_4)^{(1)}(q)=\set{(a_i, \sqrt 3 a_i) : 1\leq i\leq q},\]
	where  $a_i=(i-1)\frac{2-\sqrt{3}}{2 q-1}-\frac 12$ for $1\leq i\leq q$, and 
	\begin{align*}
		& V(P; (\ga_n\ii A_1A_4)^{(1)}(q), L_1)=\frac{4 (26-15 \sqrt{3}) (3 (q-1) q+1)}{9 (1-2 q)^2}.
	\end{align*}
\end{prop}

\begin{proof}
	For $q\geq 2$, let $(\ga_n\ii A_1A_4)^{(1)}(q):=\set{(a_i, \sqrt 3 a_i) :  1\leq i\leq q}$ such that $-\frac 12= a_1<a_2<\cdots<a_{q-1}<a_q\leq \frac{1}{2}  (1-\sqrt 3 ) $. Notice that the boundary of the Voronoi region of the element $(a_1, \sqrt 3 a_1)$ intersects $L_1$ at the element $(2(a_1+a_2)+\frac 32, -\frac {\sqrt 3}{2})$, the boundary of the Voronoi region of $(a_q, \sqrt 3 a_q)$ intersects $L_1$ at the element
	$(2(a_{q-1}+a_q)+\frac 32, -\frac {\sqrt 3}{2})$. On the other hand, the boundaries of the Voronoi regions of $(a_i,\sqrt 3 a_i)$ for $2\leq i \leq q-1$ intersect $L_1$ at the elements $(2(a_{i-1}+a_i)+\frac 32,  -\frac {\sqrt 3}{2})$ and $(2(a_{i}+a_{i+1})+\frac 32,  -\frac {\sqrt 3}{2})$. Also, recall that the union of the Voronoi regions of the elements in $(\ga_n\ii A_1A_4)^{(1)}(q)$ cover the segment $\set{(x, - \frac{\sqrt 3} 2) : -\frac 12\leq x\leq \frac{7}{2}-2 \sqrt{3}}$ from $L_1$.   
	Let us consider the following two cases:
	
	\tit{Case 1. $q=2$.}
	
	In this case, the distortion error is given by
	\begin{align*}
		&V(P; (\ga_n\ii A_1A_4)^{(1)}(q), L_1)\\
		&=\frac{1}{6}\Big(\int_{a_1}^{2(a_{1}+a_2) +\frac 32} \rho((x, -\frac{\sqrt{3}}{2}), (a_1, \sqrt{3} a_1)) \, dx+ \int_{2(a_{1}+a_2) +\frac 32}^{\frac{7}{2}-2 \sqrt{3}} \rho((x, -\frac{\sqrt{3}}{2}), (a_2, \sqrt 3 a_2)) \, dx\Big)\\
		&=\frac{1}{6}  (-4 a_2^3-8 \sqrt{3} a_2^2+10 a_2^2+8 \sqrt{3} a_2-15 a_2-34 \sqrt{3}+\frac{353}{6}),
	\end{align*}
	the minimum value of which is  $-\frac{28}{81}   (15 \sqrt{3}-26)$ and it occurs when $a_2=\frac{1}{6}-\frac{1}{\sqrt{3}}$.
	
	\tit{Case~2: $q\geq 3$.}
	
	In this case, the distortion error due to the set $\ga_n$ is given by
	\begin{align*}
		& V(P; (\ga_n\ii A_1A_4)^{(1)}(q), L_1) \\
		&=\frac{1}{6}\Big(\int_{a_1}^{2(a_{1}+a_2)+\frac 32} \rho((x, -\frac{\sqrt{3}}{2}), (a_1,\sqrt 3 a_1)) \, dx+\sum _{i=2}^{q-1} \int_{2(a_{i-1}+a_i)+\frac 32}^{2(a_{i}+a_{i+1})+\frac 32} \rho((x, -\frac{\sqrt{3}}{2}), (a_i, \sqrt 3 a_i)) \, dx \\
		&\qquad \qquad +\int_{2(a_{q-1}+a_q)+\frac 32}^{\frac{7}{2}-2 \sqrt{3}} \rho((x, -\frac{\sqrt{3}}{2}), (a_q, \sqrt 3 a_q)) \, dx\Big).
	\end{align*}
	Since $V(P; (\ga_n\ii A_1A_4)^{(1)}(q), L_1)$ gives the optimal error and is always differentiable with respect to $a_i$ for $2\leq i\leq q-1$, we have $\frac{\pa}{\pa a_i} V(P; (\ga_n\ii A_1A_4)^{(1)}(q), L_1)=0$
	yielding
	\[a_{i+1}-a_i=a_i-a_{i-1} \te{ for } 2\leq i\leq q-1\]
	implying
	\begin{equation*} 
		a_2-a_1=a_3-a_2=\cdots =a_q-a_{q-1}=k
	\end{equation*}
	for some real $k$. 
	Thus,  we have 
	\begin{equation} \label{eq200} a_2=k+a_1, \, a_3=2k+a_2, \, a_4=3k+a_3, \cdots, a_q=(q-1)k+a_1.
	\end{equation} 
	Now, putting the values of $a_i$ for $2\leq i\leq q$ in terms of $a_1$ and $k$, and as $a_1=-\frac 12$,  upon simplification, we have 
	\begin{align*}
		& V(P; (\ga_n\ii A_1A_4)^{(1)}(q), L_1) \\
		&=\frac{1}{9} \Big(2 k (q-1) (k^2 (-4 (q-2) q-3)-6 (\sqrt{3}-2) k (q-1)+3 (4 \sqrt{3}-7))-60 \sqrt{3}+104\Big).
	\end{align*}
	The above expression is minimum if $k=\frac{2-\sqrt{3}}{2 q-1}$. Thus, we have $a_i=(i-1)\frac{2-\sqrt{3}}{2 q-1}-\frac 12$ for $2\leq i\leq q$, and 
	\begin{align*}
		& V(P; (\ga_n\ii A_1A_4)^{(1)}(q), L_1)=\frac{4 (26-15 \sqrt{3}) (3 (q-1) q+1)}{9 (1-2 q)^2}
	\end{align*}
	Thus, the proof of the proposition is complete.
\end{proof}

\begin{figure}
	\begin{tikzpicture}[line cap=round,line join=round,>=triangle 45,x=0.5 cm,y=0.5 cm]
		\clip(-7.5,-5.026156858833838) rectangle (7.25308689949056,5.274903060882107);
		\draw [line width=0.8 pt] (-2.,3.4641016151377544)-- (-4.,0.)-- (-2.,-3.4641016151377544)-- (2.,-3.4641016151377544)-- (4.,0.)-- (2.,3.4641016151377544);
		\draw [line width=0.8 pt] (-2.,3.4641016151377544)-- (2.,3.4641016151377544);
		\draw (1.8,4.66) node[anchor=north west] {$A_4(\frac 12, \frac{\sqrt{3}}{2})$};
		\draw (1.60,-3.05) node[anchor=north west] {$A_2(\frac 12, -\frac{\sqrt{3}}{2})$};
		\draw (-6.66,-3.10) node[anchor=north west] {$A_1(-\frac 12, -\frac{\sqrt{3}}{2})$};
		\draw (3.8,0.65) node[anchor=north west] {$A_3(1, 0)$};
		\draw (-6.4,4.66) node[anchor=north west] {$A_5(-\frac 12, \frac{\sqrt{3}}{2})$};
		\draw (-7.9,0.65) node[anchor=north west] {$A_6(-1, 0)$};
		\draw [line width=0.2 pt,dash pattern=on 1pt off 1pt] (0.,0.)-- (-2.,-3.4641016151377544);
		\draw [line width=0.2,dash pattern=on 1pt off 1pt] (0.,0.)-- (2.,-3.4641016151377544);
		\draw [line width=0.2,dash pattern=on 1pt off 1pt] (0.,0.)-- (2.,3.4641016151377544);
		\draw [line width=0.2,dash pattern=on 1pt off 1pt] (0.,0.)-- (-4.,0.);
		\draw [line width=0.2,dash pattern=on 1pt off 1pt] (0.,0.)-- (4.,0.);
		\draw [line width=0.2,dash pattern=on 1pt off 1pt] (0.,-0.02)-- (-2.,3.4441016151377544);
		\draw [line width=0.2 pt,dash pattern=on 1pt off 1pt] (-2., -3.4641)-- (4, 0);
		\draw [line width=0.8 pt]  (-2., -3.4641)-- (2.,3.4641016151377544);
		\draw [line width=0.2 pt,dash pattern=on 1pt off 1pt]  (-4, 0)-- (2.,3.4641016151377544);
		\draw [line width=0.2 pt,dash pattern=on 1pt off 1pt] (-4, 0)-- (2.,-3.4641016151377544);
		\draw [line width=0.2 pt,dash pattern=on 1pt off 1pt] (4, 0)-- (-2.,3.4641016151377544);
		\draw [line width=0.2 pt,dash pattern=on 1pt off 1pt] (-2.,3.4641016151377544)-- (-2.,-3.4641016151377544);
		\begin{scriptsize}
			\draw [fill=ffqqqq] (-2.,-3.4641016151377544) circle (1.5pt);
			\draw [fill=ffqqqq] (2.,-3.4641016151377544) circle (1.5pt);
			\draw [fill=ffqqqq] (4.,0.) circle (1.5 pt);
			\draw [fill=ffqqqq] (2.,3.4641016151377544) circle (1.5 pt);
			\draw [fill=ffqqqq] (-2.,3.4641016151377544) circle (1.5 pt);
			\draw [fill=ffqqqq] (-4.,0.) circle (1.5pt);
			\draw [fill=ffqqqq] (-1.64273, -2.8453) circle (1.5 pt);
		\end{scriptsize}
	\end{tikzpicture}
	\begin{tikzpicture}[line cap=round,line join=round,>=triangle 45,x=0.5 cm,y=0.5 cm]
		\clip(-7.5,-5.026156858833838) rectangle (7.25308689949056,5.274903060882107);
		\draw [line width=0.8 pt] (-2.,3.4641016151377544)-- (-4.,0.)-- (-2.,-3.4641016151377544)-- (2.,-3.4641016151377544)-- (4.,0.)-- (2.,3.4641016151377544);
		\draw [line width=0.8 pt] (-2.,3.4641016151377544)-- (2.,3.4641016151377544);
		\draw (1.8,4.66) node[anchor=north west] {$A_4(\frac 12, \frac{\sqrt{3}}{2})$};
		\draw (1.60,-3.05) node[anchor=north west] {$A_2(\frac 12, -\frac{\sqrt{3}}{2})$};
		\draw (3.8,0.65) node[anchor=north west] {$A_3(1, 0)$};
		\draw (-6.66,-3.10) node[anchor=north west] {$A_1(-\frac 12, -\frac{\sqrt{3}}{2})$};
		\draw (-6.4,4.66) node[anchor=north west] {$A_5(-\frac 12, \frac{\sqrt{3}}{2})$};
		\draw (-7.9,0.65) node[anchor=north west] {$A_6(-1, 0)$};
		\draw [line width=0.2 pt,dash pattern=on 1pt off 1pt] (0.,0.)-- (-2.,-3.4641016151377544);
		\draw [line width=0.2,dash pattern=on 1pt off 1pt] (0.,0.)-- (2.,-3.4641016151377544);
		\draw [line width=0.2,dash pattern=on 1pt off 1pt] (0.,0.)-- (2.,3.4641016151377544);
		\draw [line width=0.2,dash pattern=on 1pt off 1pt] (0.,0.)-- (-4.,0.);
		\draw [line width=0.2,dash pattern=on 1pt off 1pt] (0.,0.)-- (4.,0.);
		\draw [line width=0.2,dash pattern=on 1pt off 1pt] (0.,-0.02)-- (-2.,3.4441016151377544);
		\draw [line width=0.2 pt,dash pattern=on 1pt off 1pt] (-2., -3.4641)-- (4, 0);
		\draw [line width=0.2 pt,dash pattern=on 1pt off 1pt] (2., -3.4641)-- (2.,3.4641016151377544);
		\draw [line width=0.8 pt]  (-2., -3.4641)-- (2.,3.4641016151377544);
		\draw [line width=0.2 pt,dash pattern=on 1pt off 1pt]  (-4, 0)-- (2.,3.4641016151377544);
		\draw [line width=0.2 pt,dash pattern=on 1pt off 1pt] (-4, 0)-- (2.,-3.4641016151377544);
		\draw [line width=0.2 pt,dash pattern=on 1pt off 1pt] (4, 0)-- (-2.,3.4641016151377544);
		\draw [line width=0.2 pt,dash pattern=on 1pt off 1pt] (-2.,3.4641016151377544)-- (-2.,-3.4641016151377544);
		\begin{scriptsize}
			\draw [fill=ffqqqq] (-2.,-3.4641016151377544) circle (1.5pt);
			\draw [fill=ffqqqq] (2.,-3.4641016151377544) circle (1.5pt);
			\draw [fill=ffqqqq] (4.,0.) circle (1.5 pt);
			\draw [fill=ffqqqq] (2.,3.4641016151377544) circle (1.5 pt);
			\draw [fill=ffqqqq] (-2.,3.4641016151377544) circle (1.5 pt);
			\draw [fill=ffqqqq] (-4.,0.) circle (1.5pt);
			\draw [fill=ffqqqq] (-1.64273, -2.8453) circle (1.5 pt);
			\draw [fill=ffqqqq] (1.64273, 2.8453) circle (1.5 pt);
		\end{scriptsize}
	\end{tikzpicture}
	\caption{In conditional constrained quantization with the constraint as the diagonal $A_1A_4 $ the dots represent the elements in an optimal set of $n$-points for $7\leq n\leq 8$.} \label{Fig8}
\end{figure}
Let us now give the following two examples, where $\ga_n(P)$ represents a conditional constrained optimal set of $n$ points with the corresponding $n$th conditional constrained quantization error $V_n(P)$ for $n\geq 6$. 
\begin{exam}For $n=7$, we have  
	\begin{align*}
		\ga_7(P)&=\gb\uu \set{(a, \sqrt 3 a) : a=\frac{1-2\sqrt 3} 6}
	\end{align*}
	(see Figure~\ref{Fig8}),  and by \eqref{eqMegha4},  
	\begin{align*}  
		V_7(P)=\frac{13}{36}  (48 \sqrt{3}-83)+2\Big(V(P; (\ga_7\ii A_1A_4)^{(1)}(2), L_1)+V(P; (\ga_n\ii A_1A_4)^{(1)}(1), L_1)\Big).
	\end{align*} 
	Notice that 
	\[V(P; (\ga_n\ii A_1A_4)^{(1)}(1), L_1)=\frac 16(\int_{t=0}^{\frac 12}\rho((t-\frac{1}{2},-\frac{\sqrt{3}}{2}), (-\frac{1}{2},-\frac{\sqrt{3}}{2}))\, dt=\frac{1}{144},\]
	and by Proposition~\ref{PropMe}, we have 
	$V(P; (\ga_7\ii A_1A_4)^{(1)}(2), L_1)=-\frac{28}{81} \left(15 \sqrt{3}-26\right)$. 
	Hence, 
	\[V_7(P)=\frac{13}{36}  (48 \sqrt{3}-83)+2\Big(-\frac{28}{81} (15 \sqrt{3}-26)+\frac{1}{144}\Big)=\frac{1}{648}(4512 \sqrt{3}-7765)\approx 0.0772.\]
\end{exam} 

\begin{exam}Recall Proposition~\ref{PropMe}. For $n=8$, we have  
	\begin{align*}
		\ga_8(P)&=\gb\UU \Big\{\Big ((i-1)\frac{2-\sqrt{3}}{2 q-1}-\frac 12, \sqrt 3((i-1)\frac{2-\sqrt{3}}{2 q-1}-\frac 12)\Big) : i=1, 2\Big\}\\
		&\qquad \qquad \UU F\Big(\Big\{\Big ((i-1)\frac{2-\sqrt{3}}{2 q-1}-\frac 12, \sqrt 3((i-1)\frac{2-\sqrt{3}}{2 q-1}-\frac 12)\Big) : i=1, 2\Big\}\Big)\\
		&=\gb\UU \Big\{\Big(\frac{1}{6} (1-2 \sqrt{3}), \frac{\sqrt 3}{6}(1-2 \sqrt{3})\Big), \Big(-\frac{1}{6} (1-2 \sqrt{3}),- \frac{\sqrt 3}{6}(1-2 \sqrt{3})\Big)\Big\}
	\end{align*}
	(see Figure~\ref{Fig8}),  and by \eqref{eqMegha4},  
	\begin{align*}  
		V_8(P)&=\frac{13}{36}  (48 \sqrt{3}-83)+2\Big(2 V(P; (\ga_8\ii A_1A_4)^{(1)}(2), L_1)\Big)\\
		&=\frac{13}{36}  (48 \sqrt{3}-83)+4\Big(-\frac{28}{81} (15 \sqrt{3}-26) \Big)\\
		&=\frac{1937}{324}-\frac{92}{9 \sqrt{3}}\approx 0.0766.
	\end{align*} 
\end{exam}
\begin{remark}
	Proceeding in the similar way as the above two examples, one can easily calculate the conditional constrained optimal set $\ga_n(P)$ and the corresponding $n$th conditional constrained quantization error $V_n(P)$ taking the constraint as the diagonal $A_1A_4$ for any positive integer $n\geq 9$. Similarly, they can also be calculated taking any of the other two diagonals of the larger length. 
\end{remark} 

\bibliographystyle{plain}
\bibliography{References}

\end{document}